\title{Dynamics on an infinite surface with the lattice property}
\author{W. Patrick Hooper}
\address{Department of Mathematics, Northwestern University\\
2033 Sheridan Road\\
Evanston, IL 60208-2730, USA (phone: 847-491-2853, fax: 847-491-8906)}
\email{wphooper@math.northwestern.edu}
\subjclass[2000]{37D40;37D50,37E99,32G15}
\newtheorem{theorem}{Theorem}
\newtheorem{proposition}[theorem]{Proposition}
\newtheorem{lemma}[theorem]{Lemma}
\newtheorem{remark}[theorem]{Remark}
\newtheorem{corollary}[theorem]{Corollary}
\def\C{\mathbb{C}}% 
\def\N{\mathbb{N}}% 
\def\P{\mathbb{P}}%
\def\Q{\mathbb{Q}}% 
\def\R{\mathbb{R}}% 
\def\Z{\mathbb{Z}}% 
\def\RP{\mathbb{RP}}% 
\def\GL{\textit{GL}}
\def\SL{\textit{SL}}
\def\SO{\textit{SO}}
\def\PGL{\textit{PGL}}
\renewcommand{\hom}[1]{\ensuremath{{\llbracket #1 \rrbracket}}}%
\def\ker{\textit{ker}}% 
\newcommand{\nullset}{\emptyset}
\def\Aut{\textit{Aut}}%
\def\del{\partial}% 
\def\dev{\textit{dev}}% 
\def\H{\mathbb H}%
\def\hol{\textit{hol}}                                 % holonomy map 
\def\isomS1{\textit{Isom}_+(S^1)}
\def\rt3{\sqrt{3}}
\def\G{{\mathcal G}}% Affine Automorphism Group
\def\Cyl{{\mathcal C}}% Cylinder
\renewcommand{\v}[1]{{\bf{#1}}}% vector
\newcommand{\hp}[2]{{\langle \hspace{-3pt} \langle #1, #2 \rangle \hspace{-3pt} \rangle}}% homology pair
\def\HH{{\bar{H}}}%extended homology
\def\Poly{{\bf P}}%space of polynomials
\def\KH{{\mathbb K\mathbb H}}%space of polynomials
\def\@strippedMR{}
\def\@scanforMR#1#2#3\endscan{%
  \ifx#1M\ifx#2R\def\@strippedMR{#3}%
  \else\def\@strippedMR{#1#2#3}%
  \fi\fi}
\renewcommand\MR[1]{\relax\ifhmode\unskip\spacefactor3000 \space\fi
  \@scanforMR#1\endscan
  MR\MRhref{\@strippedMR}{\@strippedMR}}
\begin{document}
\begin{abstract}
Dynamical systems on an infinite translation surface with the lattice property are studied. 
The geodesic flow on this surface is found to be recurrent in all but countably many rational directions.
Hyperbolic elements of the affine automorphism group are found to be nonrecurrent, and 
a precise formula regarding their action on cylinders is proven. A deformation of the surface
in the space of translation surfaces is found, which ``behaves nicely'' with the
geodesic flow and action of the affine automorphism group.
\end{abstract}
\maketitle

In this paper, we study dynamical systems on an infinite translation surface with the lattice property. 
We build the surface $S_1$ by gluing together two polygonal parabolas as in figure \ref{fig:s1}.
This surface is infinite in many respects; it has
infinite genus, infinite area, and two cone points with infinite cone angle.

This study is motivated by the study of closed, finite area translation surfaces with the lattice property,
a property which has been found to have great significance for dynamics. 
For instance, Veech has shown the geodesic flow on 
a finite area translation surface with the lattice property satisfies a dichotomy. In every direction, the geodesic flow
is either completely periodic (decomposes into periodic cylinders), or minimal and uniquely ergodic. See \cite{V} or \cite{MT}.

The surface $S_1$ may be obtained from some of Veech's original examples by a limiting process. See 
section \ref{sect:family}.
Moreover, theorem \ref{thm:veech_groups} demonstrates that $S_1$ has the lattice property. The geodesic flow on the surface supports a 
trichotomy. In directions of rational slope, the flow is either completely periodic or highly nonrecurrent, decomposing into strips. 
In irrational directions, the flow is recurrent, but the flow contains no closed trajectories and no saddle connections. See 
section \ref{sect:recurrence}.

\begin{figure}[h]
\begin{center}
\includegraphics[width=3in]{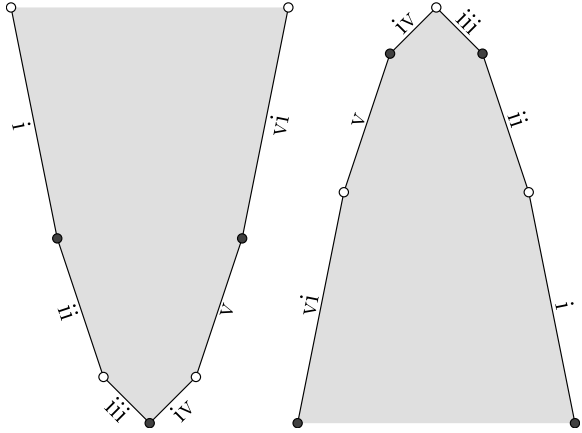}
\caption{The translation surface $S_1$ is built from two infinite polygons, the convex hull of the points 
$(n,n^2)$ for $n \in \Z$ and the convex hull of the points $(n,-n^2)$. Roman numerals indicate edges identified by translations.}
\label{fig:s1}
\end{center}
\end{figure}

We study the dynamics of the action of the affine automorphism group in section \ref{sect:G_dynamics}.
Our theorem governs the dynamics of hyperbolic elements, $\hat{H}$, of the affine
automorphism group of $S_1$. 
The action of $\hat{H}$ on $S_1$ is not recurrent. Nonetheless, cylinders are distributed evenly,
in the following sense. Theorem \ref{thm:asymptotics_of_cylinder_intersections} states
there is a positive constant
$\kappa_H$ depending only on $\hat{H}$ so that given any two cylinders 
${\mathcal A}$ and ${\mathcal B}$ on $S_1$, 
\begin{equation}
\label{eq:asymptotic_formula}
\lim_{m \to \infty} m^{\frac{3}{2}} \textit{Area}\big(\hat{H}^m({\mathcal A}) \cap {\mathcal B}\big)=
\kappa_H \textit{Area}({\mathcal A}) \textit{Area}({\mathcal B}).
\end{equation}
The theorem relies on the understanding of the action of the affine automorphism 
group on homology built up in section \ref{sect:G_dynamics}. The constant $\kappa_H$ has a precise definition,
related to billiards in hyperbolic triangles, and can be 
explicitly computed for any given $\hat{H}$. See theorem \ref{thm:asymptotics_of_homology} 
and appendix \ref{sect:appendix_trace}.

The pair consisting of our surface and its affine automorphism group,
$(S_1, \Aut(S_1))$, exhibits a canonical deformation along a path $(S_c, \Aut(S_c))$ parameterized by real numbers $c \geq 1$. 
Figure \ref{fig:surface_cylinders} shows a surface $S_c$ with $c>1$.
The translation surfaces
$S_c$ are all canonically homeomorphic to $S_1$. 
The groups $\Aut(S_c)$ are canonically isomorphic to $\Aut(S_1)$. The action of $\Aut(S_c)$
on $S_c$ is the same as the action of $\Aut(S_1)$ on $S_1$, up to
conjugacy by the canonical homeomorphism and isotopy.
See section \ref{sect:automorphisms}.

The deformation of $S_1$ preserves a natural coding of geodesic trajectories as well.  
We discover that the surfaces $S_c$ for $c \geq 1$ each have the same saddle connections. That is, up to
homotopy, the canonical homeomorphisms between the surfaces send saddle connections to saddle connections. 
See theorem \ref{thm:classification_of_saddle_connections}.  One way to code trajectories
of the geodesic flow is to triangulate the surface by cutting along disjoint 
saddle connections. A trajectory is coded by the sequence of these saddle connections it crosses.
Let $c_1,c_2 \geq 1$. Since $S_{c_1}$ and $S_{c_2}$ 
have the same saddle connections, it makes sense to compare the coding of trajectories.
In section \ref{sect:saddles}, we show that every coding of a trajectory in $S_{c_1}$ also appears as a coding for a trajectory in $S_{c_2}$.

A brief background on translation surfaces is provided in the next section. In section \ref{sect:family}, we construct our
infinite surfaces. The introduction to section \ref{sect:automorphisms} 
states theorems describing the affine automorphism and Veech groups
of our surfaces.

The remainder of the paper may be divided into three paths, each dependent only on the 
descriptions of the affine automorphism and Veech groups. The reader is invited
to take the path of her choice. Continuing along section \ref{sect:automorphisms} will result in
proofs that affine automorphism and Veech groups are as described in the theorems, as well as
proofs that the surfaces have the same saddle connections and codings of trajectories. 
A walk through section \ref{sect:recurrence} will prove that the geodesic flow in irrational directions on the
surface $S_1$ is recurrent. A stroll through section \ref{sect:G_dynamics} will uncover a detailed understanding of
the action of the affine automorphism groups on homology, which is used to prove the asymptotic formula given in 
equation \ref{eq:asymptotic_formula} above. Proofs of facts in section \ref{sect:G_dynamics} that
require meandering have been moved to the appendix.

The author would like to thank Matt Bainbridge, François Ledrappier, Rich Schwartz, Yaroslav Vorobets,
Barak Weiss, and Amie Wilkinson for helpful discussions
involving this work.

\section{Translation surfaces}

Translation surfaces can be defined in many different ways. We choose the point of view of polygons in $\R^2$ up to 
translation. If $P$ and $Q$ are polygons in $\R^2$, we say  $P$ and $Q$ are equivalent, $P \sim Q$, if there is a translation taking the vertices of $P$ 
to the vertices of $Q$ and preserving the counterclockwise cyclic ordering. We will always use the counterclockwise orientation on the edges of our polygons.

A {\em translation surface} is a surface built by identifying edges of polygons. Our edge identifications must be made by translations. Two edges may be identified only if they are parallel, have the same length, and opposite orientations. 
A simple example is the square torus, the surface obtained from a square by identifying opposite edges by translations.
If our surface is built from finitely many polygons, we insist that it be closed, and when built from countably many polygons,
we insist that it be complete (every Cauchy sequence must converge). In general, the surface is not locally isometric to 
$\R^2$. At places where the vertices of polygons are 
identified there may be cone singularities. When built from finitely many polygons, a translation
surface's cone angles must be integer multiples of $2 \pi$. In the infinite case,
there may be infinite cone angles. 
We use $\Sigma$ to denote the set of all cone singularities.

There are always many ways to build the same translation surface from polygons. We say two translation surfaces $S$ and
$S'$ are {\em the same} if they may be cut into equivalent polygons which are identified by translations 
in the same combinatorial way. This is equivalent to the existence of a direction preserving isometry from $S$ to $S'$. Here 
we use the metric and notion of direction induced by regarding our polygons as subsets of $\R^2$. Because translation surfaces are built by gluing together pieces of $\R^2$ by translations, which preserve distance and direction, 
both notions are natural on translation surfaces. The notion of direction may be viewed as a 
geodesic flow invariant fibration 
of the unit tangent bundle of the surface $S$ over the circle, which restricts to a trivial bundle
on $S \smallsetminus \Sigma$.

There is a natural action of the affine group on translation surfaces. We 
consider the group $\GL(2, \R)$ acting on $\R^2$ in the usual way, by matrix multiplication on column vectors. If 
$S$ is obtained by gluing polygons $P_1, P_2, \ldots$ together by translations determined by edge identifications, then
for $A \in \GL(2, \R)$ the surface $A(S)$ is obtained by gluing $A(P_1), A(P_2), \ldots$ by the same edge identifications. 
This operation constructs a new translation surface, because affine transformations send parallel lines to parallel lines, preserve
ratios of lengths of parallel segments, and uniformly preserve or reverse orientations. 
In a possible abuse of notation, there is a natural map
$S \to A(S)$ induced by the maps $A:P_i \to A(P_i)$, which we also denote by $A:S \to A(S)$.

The {\em Veech group} of a translation surface $S$ is the subgroup of 
$\Gamma(S) \subset \GL(2, \R)$ so that if $A \in \Gamma(S)$ then
$A(S)$ and $S$ are the same, in the sense above. This tells us that there exists at least one direction preserving isometry
from $\varphi_A: A(S) \to S$. The collection of  all maps $\hat{A}=\varphi_A \circ A:S \to S$ with $A \in \Gamma(S)$ 
is a group known as the {\em affine automorphism group} of $S$, $\Aut(S)$. The map
${\bf D}:\Aut(S) \to \Gamma(S)$ given by ${\bf D}:\hat{A} \mapsto A$ is called the derivative map
because $\hat{A}$ acts on the tangent plane of a non-singular point of $S$ as the action of $A$ on $\R^2$.

When $S$ is finite area, elements of the Veech group $\Gamma(S)$, must be area preserving and hence 
$\Gamma(S) \subset \SL^\pm(2,\R)$. Here we use $\SL^\pm(2,\R)$ to denote the space of all $2 \times 2$ matrices
with determinant $\pm 1$. So long that $S$ is built from a finite number of polygons, 
Veech showed that $\Gamma(S)$ is a discrete subgroup of $\SL^\pm(2,\R)$.
A surface is said to have the {\em lattice property} if $\Gamma(S)$ is a lattice in $\SL^\pm(2,\R)$. 

The surfaces we consider in this paper have infinite area. Thus, it is possible that there are elements of the Veech group
with determinants other than $\pm 1$. It is also possible that the Veech group is indiscrete. 
In the infinite cases we consider, we will prove that neither of these occur.

\section{A family of translation surfaces}
\label{sect:family}

Veech considered translation surfaces built from regular $n$-gons \cite{V}. Indeed, every surface
built from finitely many regular $n$-gons has the lattice property.

Here is a dynamical way to write a $n$-gon. Take the rotation of the plane given by the matrix
$$R_t=\left[ \begin{array}{cc}
\cos t & - \sin t \\
\sin t & \cos t
\end{array} \right].$$
The regular $n$-gon is the convex hull of the orbit of the point $(1,0)$ under the group generated by $R_{\frac{2 \pi}{n}}$. 

In order to take a limit we conjugate this rotation by the affine transform 
$S_t:(x,y)\mapsto(\frac{y}{\sin t}, \frac{x-1}{\cos t - 1} )$. The purpose of
$S_t$ is to normalize the first three vertices of the polygons. We have
$$S_t(1,0)=(0,0), ~~
S_t(\cos t, \sin t)=(1,1), ~~ \textrm{and} ~~
S_t(\cos t, -\sin t)=(-1,1).$$
Setting $c=\cos t$ and defining $T_{\cos t}=S_t \circ R_t \circ S_t^{-1}$ yields
\begin{equation}
\label{eq:generalized_rotation}
T_c:(x,y) \mapsto \big(c x+(c-1) y+1, (c+1)x+cy+1\big)
\end{equation}
Let $Q_c^+$ be the convex hull of the set of points $\{P_{c}^k=T_c^k (0,0)\}_{k \in \Z}$.
For $c=\cos \frac{2 \pi}{n}$, $Q_c^+$ is an affinely regular $n$-gon. 
For $c=1$ the collection of forward and backward orbits of $(0,0)$ is the set of points 
$\{(n,n^2) ~|~n \in \Z\}$, the integer points on the parabola
$y=x^2$. Finally for $c>1$,
the orbit of $(0,0)$ lies on a hyperbola. Assume $c=\cosh t$. 
Up to an element of the affine group, the orbit of $(0,0)$ is $\{ (\cosh nt, \sinh nt, 1) ~|~n \in \Z\}$.

We will use $Q_c^+$ to build our translation surfaces.
Let $Q_c^-$ be the image of $Q_c^+$ under a rotation
by $\pi$. Now identify each edge of $Q_c^+$ to its image edge in $Q_c^-$ by {\em parallel translation}. 
We call the resulting translation surface $S_c$.
See figure \ref{fig:veechsurfaces} for some of the cases with $c < 1$. The case $S_1$
is drawn in figure \ref{fig:s1}. 

\begin{figure}[h]
\begin{center}
\includegraphics[width=4in]{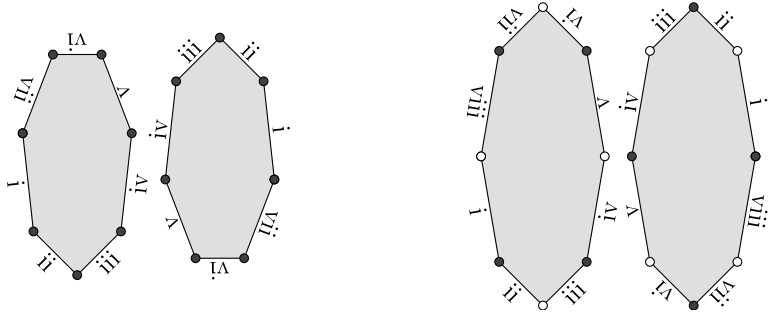}
\caption{The translation surface $S_{\cos \frac{2\pi}{7}}$ and $S_{\cos \frac{\pi}{4}}$ are built from pairs
of affinely regular polygons.}
\label{fig:veechsurfaces}
\end{center}
\end{figure}

In the cases $c \geq 1$, the surfaces $S_c$ are all homeomorphic. 
These surfaces have infinite genus and a pair of singular points with infinite cone angles.
The homeomorphism from $S_c$ to $S_1$ is induced by a homeomorphism $Q_c^+ \to Q_1^+$
which fixes the origin $(0,0)$ and sends each edge of $Q_c^+$ to the corresponding edge of 
$Q_1^+$, preserving ratios of distances on each edges. Conjugating by a rotation by $\pi$ induces a similar map
$Q_c^- \to Q_1^-$. Moreover, because these maps were assumed to preserve ratios of distances
on each edge, the map extends to a homeomorphism $H_c:S_c \to S_1$.

%NEXT SECTION

\section{Affine automorphism groups}
\label{sect:automorphisms}

In this section, we compute the affine automorphism groups of the surfaces $S_c$ defined in the previous section.
We cover the case $c=\cos \frac{2 \pi}{n}$ without proof, since the groups are well known.

First, we will describe the Veech groups $\Gamma(S_c)$ for $c=\cos \frac{2 \pi}{n}$ with $n \in \Z$. These
groups always contain $-I_c=-I$ together with the following involutions.
$$A_c=\left[\begin{array}{cc}
-1 & 0 \\
0 & 1
\end{array}\right],
\quad
B_c=
\left[\begin{array}{cc}
-1 & 2 \\
0 & 1
\end{array}\right],
\quad \textrm{and} \quad
C_c=
\left[\begin{array}{cc}
-c & c-1 \\
-c-1 & c
\end{array}\right].
$$
These elements generate a reflection group in a triangle with two ideal vertices and one
vertex with an angle of $\frac{2 \pi}{n}$. In the case of $n$ even, these elements generate 
the Veech group. When $n$ is odd, we must include an additional involution, which folds the
triangle in half.
$$\left[\begin{array}{cc}
-\cos \frac{\pi}{n} & -\frac{\sin^2 \frac{\pi}{n}}{\cos \frac{\pi}{n}} \\
-\cos \frac{\pi}{n} & \cos \frac{\pi}{n}
\end{array}\right]$$
When $n$ is odd, these elements generate a reflection group in a hyperbolic triangle with one
ideal vertex, one right angle, and one angle of $\frac{\pi}{n}$.  These groups are all lattices,
thus these surfaces have the lattice property. 

In our surfaces $S_c$ with $c \geq 1$, the last automorphism makes no appearance.
The Veech groups $\Gamma(S_c)$ for $c=\cos \frac{2 \pi}{n}$ or $c \geq 1$ 
may be thought of as representations of the group 
$$\G^\pm=(\Z_2 \ast \Z_2 \ast \Z_2) \oplus \Z_2=\langle A,B,C,-I ~|~ A^2=B^2=C^2=I \rangle,$$
where $I$ denotes the identity matrix. Given an element $G \in \G^\pm$, 
we will use $G_c$ to denote the corresponding element of $\Gamma(S_c)$.

\begin{theorem}[Veech groups]
\label{thm:veech_groups}
The Veech groups $\Gamma(S_c)\subset \GL(2,\R)$ for $c \geq 1$ 
are generated by the involutions $-I_c=-I$,
$$A_c=\left[\begin{array}{cc}
-1 & 0 \\
0 & 1
\end{array}\right],
\quad
B_c=
\left[\begin{array}{cc}
-1 & 2 \\
0 & 1
\end{array}\right],
\quad \textrm{and} \quad
C_c=
\left[\begin{array}{cc}
-c & c-1 \\
-c-1 & c
\end{array}\right].
$$
\end{theorem}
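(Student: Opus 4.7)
\startproofof{Theorem \ref{thm:veech_groups}}

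My plan is to prove the two inclusions separately. For the forward inclusion $\langle -I, A_c, B_c, C_c \rangle \subseteq \Gamma(S_c)$, I would exhibit affine automorphisms realizing each generator. The rotation $-I$ swaps $Q_c^+$ and $Q_c^-$ by the very definition of $Q_c^-$, and respects the parallel-edge identifications. The reflection $A_c=\text{diag}(-1,1)$ preserves each polygon: a direct computation using (\ref{eq:generalized_rotation}) gives $A_c \circ T_c \circ A_c = T_c^{-1}$, so $A_c$ permutes the vertex set of $Q_c^+$ by $P_c^n \mapsto P_c^{-n}$, and compatibility with the edge identifications follows. For the remaining generators, I would produce them from parabolic Dehn twists. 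The product $A_cB_c = \bigl(\begin{smallmatrix}1 & -2 \\ 0 & 1\end{smallmatrix}\bigr)$ is a horizontal shear, and a routine calculation with the vertices of $Q_c^\pm$ and their edge identifications shows that $S_c$ decomposes into horizontal cylinders of common modulus $\tfrac{1}{2}$, realizing $A_cB_c$ as a Dehn multi-twist. Similarly, $B_cC_c$ has trace $-2$ and fixes the direction $(1,1)$, arising as a Dehn multi-twist on a cylinder decomposition in that direction. The involutions $B_c$ and $C_c$ are then recovered as the compositions $A_c \cdot (A_cB_c)$ and $B_c \cdot (B_cC_c)$.

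For the reverse inclusion, the structural fact to exploit is that, modulo $\{\pm I\}$, the group $\Gamma_0=\langle A_c, B_c, C_c \rangle$ acts on $\H^2$ as the reflection group of a hyperbolic triangle whose sides are the fixed axes of $A_c$, $B_c$, and $C_c$. Computing traces, the products $A_cB_c$ and $B_cC_c$ are parabolic for every $c \geq 1$, while $A_cC_c$ has trace $2c$ and is therefore parabolic exactly when $c=1$ and hyperbolic when $c > 1$. Thus the fundamental triangle has two ideal vertices and a third vertex which is ideal (for $c = 1$) or ultra-ideal (for $c > 1$). My plan is then to show that every $G \in \Gamma(S_c)$ can be brought, by left-multiplication by an element of $\langle -I, A_c, B_c, C_c \rangle$, into a coset representative that must fix a chosen distinguished feature of $S_c$---for instance, the origin together with the horizontal cylinder decomposition identified above---forcing it to be trivial.

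The principal obstacle is precisely this reverse inclusion. Because $S_c$ has infinite area, $\Gamma(S_c)$ is not a priori a discrete subgroup of $\SL^\pm(2,\R)$, so the classical argument for lattice surfaces---discreteness together with a fundamental-domain count---does not apply directly. The rigidity must instead come from a combinatorial invariant preserved by every affine automorphism. A natural candidate is the classification and coding of saddle connections developed in Section \ref{sect:saddles}: any $G \in \Gamma(S_c)$ permutes saddle-connection directions, and one would need to prove that the $\Gamma_0$-orbit of the horizontal direction exhausts the directions producing the same combinatorial pattern. Combined with the explicit cylinder-decomposition data from the forward direction, this should pin down $G$ modulo the claimed generators.
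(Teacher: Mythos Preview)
Your proposal is correct and follows essentially the same architecture as the paper. The forward inclusion is handled identically: $-I$ and $A_c$ via the obvious symmetries, and the parabolics via Dehn multi-twists on the horizontal and slope-one cylinder decompositions (the paper packages these as $D_c=B_cA_c$ and $E_c=(-I)C_cB_c$, but this is the same content). For the reverse inclusion you have correctly diagnosed both the obstacle (no automatic discreteness in infinite area) and the remedy: the paper uses precisely the saddle-connection classification of Section~\ref{sect:saddles} to force $M(\theta_{\text{horiz}})$ into the $\G^\pm_c$-orbit of the horizontal or slope-one direction, distinguishes those two by the cone-point combinatorics on the smallest cylinder, and then reads off that $N_c^{-1}M$ is a power of $D_c$ (possibly times $-I\,A_c$) from its action on that smallest horizontal cylinder---exactly your ``coset representative fixing a distinguished feature'' strategy.
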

We will use $\G^\pm_c$ to denote the subgroup $\langle -I_c, A_c, B_c, C_c \rangle \subset \SL(2,\R)$. The theorem 
claims $\G^\pm_c=\Gamma(S_c)$ for $c \geq 1$.

These groups $\G^\pm_c$ are best understood by looking at their action on the hyperbolic plane 
$\H^2=\SL(2,\R)/\SO(2,\R)$.
The elements $A_c$, $B_c$, and $C_c$ act on $\H^2$ as reflections in geodesics.
When $c=1$, the triangle formed
has three ideal vertices, and the Veech group $\Gamma(S_1)$ is precisely the congruence two subgroup of 
$\SL^\pm(2,\Z)$. This group is a lattice. Finally, when $c>1$, the triangle formed has two ideal vertices and one
ultra-ideal vertex. See figure \ref{fig:veechgroup}.

\begin{figure}[h]
\begin{center}
\includegraphics[width=4in]{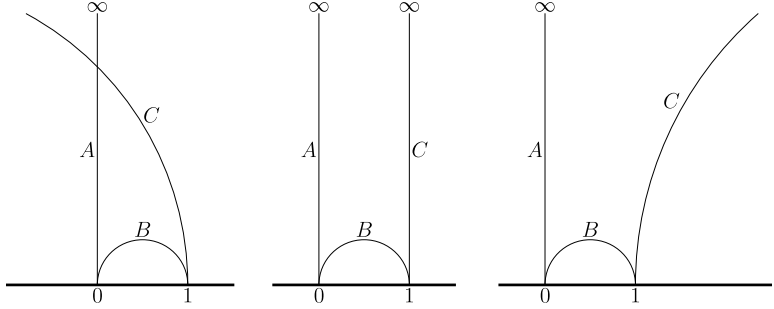}
\caption{This figure shows the geodesics in the upper half plane model of $\H^2$ 
that $A_c$, $B_c$, and $C_c$ reflect in for $c=\cos \frac{\pi}{4}$, $c=1$, and $c=\frac{5}{4}$ from left to right.}
\label{fig:veechgroup}
\end{center}
\end{figure}

We also wish to give a description of the affine automorphism groups $\Aut(S_c)$. Before we do this, it is useful to
work with alternate generators for $\G^\pm$. Define the elements $D=B A$, $E=(-I) C B$. The elements $\{A,D,E,-I\}$ also are
generators for $\G^\pm$. From the above, we have
\begin{equation}
\label{eq:affop}
D_c=\left[\begin{array}{cc}
1 & 2 \\
0 & 1
\end{array}\right]
\quad
E_c=\left[\begin{array}{cc}
-c & c+1 \\
-c-1 & c+2
\end{array}\right]
\end{equation}

The affine automorphism group of a surface $S$ may be generated by choosing for each generator $G$ of 
$\Gamma(S)$ an element $\hat{G} \in \Aut(S)$ so that ${\bf D}(\hat{G})=G$, and by including
generators for the kernel of ${\bf D}:\Aut(S) \to \Gamma(S)$.

We will now describe the affine automorphism groups of $S_c$ for $c=\cos \frac{2\pi}{n}$. The surfaces
$S_c$ are affinely equivalent to gluings of a pair of regular polygons $n$-gons. The surfaces
built from a pair of regular $n$-gons have several affine automorphisms whose derivatives are Euclidean rotations
and reflections. The affine automorphism groups include conjugates of these Euclidean elements.
The groups also contain the automorphism $\hat{D}_c$, which applies a single right Dehn twist to each
horizontal cylinder, and $\hat{E}_c$, which acts by a single right Dehn twist in every slope one cylinder.
When $n$ is odd, these elements generate the affine automorphism group
and $\ker ({\bf D})$ is trivial. For $n$ even,
we also need to include the affine automorphism which swaps the two polygons by translation.
This involution generates $\ker({\bf D})$ when $n$ is even.

The surfaces $S_c$ for $c \geq 1$ also admit cylinder decompositions into horizontal and vertical 
cylinders. See figure \ref{fig:surface_cylinders}.

\begin{theorem}[Affine Automorphisms]
\label{thm:affine_automorphisms}
The subgroup $\ker ({\bf D}) \subset \Aut(S_c)$ is trivial for $c \geq 1$.
Generators for $\Aut(S_c)$ with $c \geq 1$ may be described as follows.
\begin{itemize}
\item $\widehat{-I}_c$ swaps the two pieces of $S_c$, rotating each piece by $\pi$.
\item $\hat{A}_c$ is the automorphism induced by the Euclidean reflection in the vertical line $x=0$, which
preserves the pieces $Q_c^+$ and $Q_c^-$ of $S_c$ and preserves the gluing relations.
\item $\hat{D}_c$ preserves the decomposition of $S_c$ into maximal horizontal cylinders, and acts as a single right Dehn twist in each cylinder.
\item $\hat{E}_c$ preserves the decomposition of $S_c$ into maximal cylinders of slope $1$, and acts as a single right Dehn twist in each cylinder.
\end{itemize}
\end{theorem}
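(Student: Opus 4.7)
The plan is to prove the theorem in three stages: (a) construct the four listed automorphisms and verify their derivatives, (b) use Theorem \ref{thm:veech_groups} to see that these four automorphisms project onto a generating set for $\Gamma(S_c)$, and (c) show that $\ker(\mathbf{D})$ is trivial. Once (a)--(c) are in place, every $\hat{F} \in \Aut(S_c)$ decomposes as a word in the four listed generators times an element of $\ker(\mathbf{D})$, which must be the identity, so the four maps generate.

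For (a), the maps $\widehat{-I}_c$ and $\hat{A}_c$ are induced by the obvious Euclidean symmetries of the polygonal picture: a $\pi$-rotation swapping $Q_c^+$ and $Q_c^-$, and a reflection in the $y$-axis, respectively. Each preserves the edge identifications by inspection. To construct $\hat{D}_c$ and $\hat{E}_c$, one first exhibits the relevant cylinder decompositions. Strict convexity of $Q_c^\pm$ ensures that every horizontal (resp.\ slope-one) line meeting the interior of $Q_c^+$ crosses its boundary in exactly two edges; pairing each resulting strip with its counterpart in $Q_c^-$ via the edge gluings decomposes $S_c$ into a countable disjoint union of horizontal (resp.\ slope-one) cylinders. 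A direct computation from the recursion \eqref{eq:generalized_rotation} shows that all cylinders in each decomposition share a common modulus, so the simultaneous single right Dehn twist across the entire decomposition is a well-defined affine automorphism whose derivative one verifies to be $D_c$ (resp.\ $E_c$).

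For (b), observe that $\{-I, A, D, E\}$ is an alternate generating set for $\G^\pm$: from $D = BA$ and $A^2 = I$ we recover $B = DA$, and then $C$ is recovered from $E = (-I)CB$. Hence Theorem \ref{thm:veech_groups} gives $\Gamma(S_c) = \G^\pm_c = \langle -I_c, A_c, D_c, E_c \rangle$, so the derivatives of the four listed automorphisms generate $\Gamma(S_c)$.

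The main obstacle is step (c). Let $f \in \ker(\mathbf{D})$. In every translation-surface chart $f$ acts as a Euclidean translation, and since $S_c \setminus \Sigma$ is connected these local vectors assemble into a single global translation vector $v \in \R^2$. As a direction-preserving isometry, $f$ carries each saddle connection to a saddle connection of the same length and the same oriented direction. For each $k$ the polygon edge from $P_c^k$ to $P_c^{k+1}$, identified with its partner in $Q_c^-$ under the gluing, is a saddle connection $\gamma_k$ on $S_c$ of length $|d_k|$ in direction $d_k = P_c^{k+1} - P_c^k$. Thanks to strict convexity of the vertex sequence $\{P_c^k\}$ (on a parabola for $c=1$, on a hyperbola for $c>1$), a short direct calculation shows that $\gamma_k$ is the unique saddle connection of its direction and length on $S_c$: any competing pair of vertices at displacement $d_k$ either coincides with $\gamma_k$ after accounting for the edge identifications, or cannot be joined by a straight interior segment inside $Q_c^+ \cup Q_c^-$. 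Hence $f(\gamma_k) = \gamma_k$ as oriented arcs, which fixes both endpoints of $\gamma_k$ and then forces $f$ to be the identity along $\gamma_k$. Consequently $v = 0$ and $f$ is the identity, completing the proof.
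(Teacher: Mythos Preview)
Your three-step outline is reasonable, and step (a) is essentially the paper's Lemma~\ref{lem:generators}. There are, however, genuine problems with (b) and (c).

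For step (b) you invoke Theorem~\ref{thm:veech_groups} as a black box to conclude that nothing outside $\G_c^\pm$ lies in $\Gamma(S_c)$. In this paper that is circular: the text states explicitly that ``the affine automorphism theorem implies the Veech group theorem,'' and the two are proved \emph{together}. The substantive step you are skipping is showing that the derivative of an arbitrary affine automorphism lands in $\G_c^\pm$; the paper does this by first classifying all saddle-connection directions (Theorem~\ref{thm:classification_of_saddle_connections}), then arguing that any $\hat{M}\in\Aut(S_c)$ must send the horizontal direction to another saddle-connection direction, hence to some $N_c(\theta)$ with $N_c\in\G_c^\pm$, and finally analysing how $\hat N_c^{-1}\hat M$ can act on the smallest horizontal cylinder. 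Citing Theorem~\ref{thm:veech_groups} simply hides this work.

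For step (c), your idea is right but your justification is not. You claim each edge $\gamma_k$ is the unique saddle connection of its holonomy because ``any competing pair of vertices at displacement $d_k$ \ldots\ cannot be joined by a straight interior segment inside $Q_c^+\cup Q_c^-$.'' Saddle connections are not constrained to lie inside a single polygon; they may cross arbitrarily many identified edges, so checking pairs of vertices is not enough. The statement does hold for $\gamma_0$ (holonomy $(1,1)$), because every slope-one saddle connection lies on a slope-one cylinder boundary and the smallest such boundary is $\gamma_0$ itself; one such uniqueness already forces $v=0$, so your argument is salvageable. The paper's kernel argument is different: any element of $\ker(\mathbf{D})$ must preserve the unique smallest horizontal cylinder, whose upper boundary contains exactly one cone point; hence that cone point is fixed, the whole cylinder is fixed pointwise, and by connectedness the map is the identity.
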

Let $\hat{\G}_c$ denote the group of self-homeomorphisms up to isotopy 
generated by $\langle \widehat{-I}_c, \hat{A}_c, \hat{D}_c, \hat{E}_c \rangle$.
The theorem states that these generators may be realized by affine automorphisms and generate $\Aut(S_c)$.
Both these theorems will be proved in the subsequent subsections.

Recall, the translation surfaces $S_c$ for $c \geq 1$ are canonically homeomorphic. Further, by the above theorem, the action of the affine automorphism groups $\Aut(S_c)$ on $S_c$ are topologically the same. That is,
they are the same up to conjugacy by this homeomorphism and isotopies fixing the singularities. We use this point
of view in the proofs. 

\begin{figure}[h]
\begin{center}
\includegraphics[width=4in]{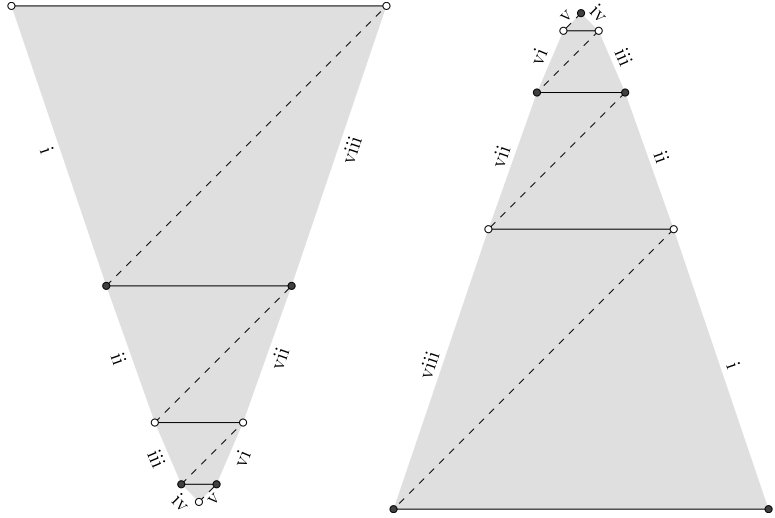}
\caption{The surface $S_c$ with $c=\frac{5}{4}$ is shown. Horizontal and slope one saddle connections
are drawn.}
\label{fig:surface_cylinders}
\end{center}
\end{figure}

\begin{remark}
The automorphism $\hat{F}_c$ corresponding to the element
$$F_c=C_c A_c=\left[ \begin{array}{cc}
c & c -1 \\
c+1 & c
\end{array}\right]$$ 
may be of special interest. The action of $\hat{F}_c$ preserves the decomposition into two pieces,
$Q_c^+$ and $Q_c^-$.
It acts on the top piece as the element of $T_c$ acts on the plane. (See equation \ref{eq:generalized_rotation}). 
When $c \geq 1$, the surface $S_c$ decomposes into a countable number of maximal 
strips in each eigendirection of $F_c$. The action of $\hat{F}_c$ preserves this decomposition into strips.
We number each strip by integers, so that each strip numbered by $n$ is adjacent to the 
strips with numbers $n \pm 1$. There are two possible numberings satisfying this condition. If the correct 
numbering system is chosen, the action of $\hat{F}_c$ sends each strip numbered by $n$ to the strip numbered 
$n+1$.

The action of $\hat{F}_c$ for $c \geq 1$ is as nonrecurrent as possible. Given any compact set 
$K \subset S_c \smallsetminus \Sigma$, there is an $N$ so that for $n>N$, $\hat{F}_c^n(K) \cap K = \nullset$.
\end{remark}

The affine automorphism theorem implies the Veech group theorem,
up to computations of the derivatives of the affine automorphisms.
In the next subsection, we show that
our list of self-homeomorphisms $\widehat{-I}_c$, $\hat{A}_c$, $\hat{D}_c$, and $\hat{E}_c$
are indeed realized by affine automorphisms. 
It is more difficult to show that every affine automorphism
lies in the generated 
group. This will be demonstrated in the 
subsequent subsections.

\subsection{Generators of the affine automorphism groups}
\label{sect:generators}

In this section we prove that the affine automorphism and Veech groups contain the elements we listed in 
theorems \ref{thm:veech_groups} and \ref{thm:affine_automorphisms} in the
cases of $c \geq 1$.

We begin by stating a well known result that gives a way to detect parabolic elements inside the
Veech group. The idea is that a Dehn twist may be performed in a cylinder by a parabolic. See
figure \ref{fig:dehn}.
The height of a Euclidean cylinder divided by its circumference is called the {\em modulus} of the cylinder.

\begin{proposition}[Veech]
\label{prop:parabolic}
Suppose a translation surface has a decomposition into cylinders $\{C_i\}_{i \in \Lambda}$ 
in a direction $\theta$. Suppose further there is a number $m$ such that for 
every cylinder $C_i$, the modulus of $M_i$ of $C_i$ satisfies $m/M_i \in \Z$.
Then there is an affine automorphism of the
translation surface preserving the direction $\theta$, fixing each point on the boundary of each cylinder, 
and acting as an $m/M_i$ power of single right Dehn twist in each cylinder $C_i$. The derivative this affine 
automorphism is a parabolic fixing direction $\theta$.
\end{proposition}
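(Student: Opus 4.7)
My plan is to reduce to the case in which $\theta$ is horizontal by conjugating everything by the rotation $R_{-\theta} \in \SO(2,\R)$; since conjugation by a rotation preserves moduli and sends parabolics to parabolics, it suffices to produce the desired element under the assumption that the cylinder decomposition is horizontal. So I will assume each $C_i$ is a horizontal cylinder, and I can identify it isometrically with a quotient $(\R/w_i\Z) \times [0,h_i]$, where $w_i$ is the circumference and $h_i = M_i w_i$ is the height.

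Next, I will define the candidate affine automorphism cylinder by cylinder. On each $C_i$, set $\varphi_i(x,y) = (x + m y, y)$ in the coordinates above, so that the derivative of $\varphi_i$ at every interior point is the shear
\[
P_m = \begin{pmatrix} 1 & m \\ 0 & 1 \end{pmatrix}.
\]
I will check that $\varphi_i$ descends to a well-defined self-map of $C_i$: the shift at height $y=h_i$ is $m h_i = (m/M_i) w_i$, which lies in $w_i \Z$ by hypothesis, so the map respects the top-bottom identification. Moreover, $\varphi_i$ fixes the bottom boundary $y=0$ pointwise and, reading the top boundary as the same circle via the cylinder's gluing, fixes the top boundary pointwise as well; it realizes exactly $m/M_i$ iterates of a single right Dehn twist on $C_i$.

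The main step is then to piece the $\varphi_i$ together into a global affine automorphism $\hat{P}$ of the surface. Because each boundary component of each cylinder is a finite union of saddle connections shared with the boundaries of neighboring cylinders, and because each $\varphi_i$ restricts to the identity on $\partial C_i$, the maps $\varphi_i$ agree on all common boundary arcs and assemble into a continuous map $\hat{P}$ on the whole surface that fixes the boundaries (hence the singularities) and has constant derivative $P_m$ in the chart coming from any cylinder interior. This is a genuine affine automorphism with derivative $P_m$, which is parabolic and fixes the horizontal direction. Conjugating $\hat{P}$ back by $R_\theta$ yields the desired affine automorphism with parabolic derivative fixing $\theta$.

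The only real obstacle is the consistency check at the cylinder boundaries; once one verifies that $m h_i \in w_i \Z$ forces $\varphi_i$ to be the identity on $\partial C_i$, the gluing is automatic. Everything else is bookkeeping with the standard cylinder coordinates, so I do not expect any further complications.
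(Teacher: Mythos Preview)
The paper does not prove this proposition; it is stated as a well-known result due to Veech, with Figure~\ref{fig:dehn} serving as the only justification. Your argument is the standard one, and its architecture---normalize to the horizontal direction, shear each cylinder by the same parabolic, observe that the boundary circles are fixed pointwise, and glue---is exactly right.

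There is, however, an algebraic slip in your consistency check. The paper defines the modulus as height over circumference, $M_i=h_i/w_i$, so $h_i=M_i w_i$ and hence
\[
m\,h_i \;=\; (m M_i)\,w_i,
\]
not $(m/M_i)\,w_i$ as you wrote. Consequently your map $\varphi_i(x,y)=(x+my,y)$ realizes $m M_i$ right Dehn twists in $C_i$, and restricts to the identity on the top boundary circle precisely when $m M_i\in\Z$, which is not the stated hypothesis $m/M_i\in\Z$. In fact the proposition as printed carries the same inversion: a global horizontal shear with derivative $\left(\begin{smallmatrix}1&s\\0&1\end{smallmatrix}\right)$ performs $sM_i$ twists in $C_i$, so the correct commensurability condition is $sM_i\in\Z$ for all $i$, and the twist count is $sM_i$. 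This discrepancy is invisible in the paper's applications, where all $M_i$ are equal and one simply takes $s=1/M_i$; but your proof should use the correct shear parameter. Once you set $\varphi_i(x,y)=(x+sy,y)$ with $sM_i\in\Z$, the shift at height $h_i$ is $sh_i=(sM_i)w_i\in w_i\Z$, the top boundary is fixed pointwise, and the rest of your gluing argument goes through verbatim.
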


\begin{figure}[h]
\begin{center}
\includegraphics[width=4.5in]{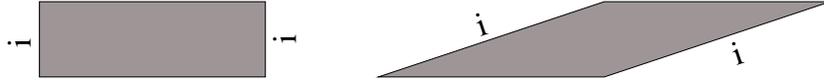}
\caption{A right Dehn twist in a cylinder may be realized by a parabolic (or shear).
The action on the cylinder preserves boundary points, but twists the interior of the cylinder.}
\label{fig:dehn}
\end{center}
\end{figure}

In our cases, each $M_i$ will be equal, hence we get an affine automorphism which acts by a single right 
Dehn twist in each cylinder.

\begin{lemma}
\label{lem:generators}
Let $c \geq 1$.
The subgroup $\G^\pm_c= \langle -I_c, A_c, D_c, E_c \rangle \subset \SL(2,\R)$ 
is contained in the Veech group $\Gamma(S_c)$. 
The self-homeomorphisms of $S_c$ given by
$\widehat{-I}_c$, $\hat{A}_c$, $\hat{D}_c$, and $\hat{E}_c$ 
may be realized by affine automorphisms, and thus generate a subgroup of $\Aut(S_c)$.
\end{lemma}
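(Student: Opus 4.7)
The plan is to exhibit each of $\widehat{-I}_c$, $\hat{A}_c$, $\hat{D}_c$, $\hat{E}_c$ as an honest affine automorphism; the inclusion $\G^\pm_c\subset\Gamma(S_c)$ then follows by taking derivatives. First I handle the two involutions. The map $-I$ carries $Q_c^+$ onto $Q_c^-$ by the very definition of $Q_c^-$ and swaps each pair of identified edges, so it descends to an affine automorphism with derivative $-I$. For $\hat{A}_c$, a direct calculation using (\ref{eq:generalized_rotation}) gives the conjugation identity $A_c\circ T_c\circ A_c=T_c^{-1}$; this forces $A_c$ to permute the orbit $\{P_c^k\}$ by $P_c^k\mapsto P_c^{-k}$, so $A_c$ preserves $Q_c^+$ (and, since $A_c$ commutes with $-I$, also $Q_c^-$), and the edge identifications are visibly sent to themselves.

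For $\hat{D}_c$ I would apply Proposition \ref{prop:parabolic} to the horizontal foliation. Using the reflection symmetry from the previous step, the horizontal direction decomposes $S_c$ into cylinders indexed by consecutive $y$-levels of vertices $y_k<y_{k+1}$, where $y_k$ is the $y$-coordinate of $P_c^k$ and $y_{-k}=y_k$. Following a trajectory at height $y\in(y_k,y_{k+1})$ in $Q_c^+$ through the gluings of the two slanted edges bounding this strip, and using that the $x$-coordinate along each slanted edge is an affine function of $y$, the closed trajectory has total length $2(x_k+x_{k+1})$ independent of $y$, while the cylinder height is $y_{k+1}-y_k$. The identity
\[
y_{k+1}-y_k \;=\; x_k+x_{k+1}
\]
drops out in a single line from the recursion $(x_{k+1},y_{k+1})=T_c(x_k,y_k)$, so every horizontal cylinder has modulus $\tfrac{1}{2}$. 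Veech's proposition then produces a parabolic automorphism acting as a single right Dehn twist in each horizontal cylinder; its derivative is $D_c$.

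For $\hat{E}_c$ I would repeat this analysis in the slope-one direction, parameterizing slope-one lines as $y=x+a$. The critical $a$-values at which such a line passes through a vertex $P_c^{\pm k}$ satisfy the recurrence $u_{k+1}=2cu_k-u_{k-1}+2$ with $u_0=0$, $u_1=2$, and each open interval $(u_{k-1},u_k)$ indexes one slope-one cylinder. A computation parallel to the horizontal case, again using only $T_c$ and the symmetry $A_c$ rather than explicit coordinates of $P_c^k$, yields every slope-one cylinder modulus $\tfrac{1}{2c+2}$; Veech's proposition then supplies the single-Dehn-twist parabolic whose derivative, converted from the slope-one frame back to the standard basis, is $E_c$. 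The main obstacle is the uniform-modulus calculation: since $P_c^k$ has no friendly closed form for generic $c\ge 1$, I would derive the key algebraic identities (the one displayed above and its slope-one analogue) directly from the recursion $T_c$ rather than by substitution, so that the cylinder circumferences and heights collapse into the same ratio for every $k$; once these identities are in hand, Veech's proposition finishes the job.
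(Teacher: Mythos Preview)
Your proposal is correct and follows essentially the same route as the paper: verify the two involutions directly from the construction, then apply Veech's parabolic criterion (Proposition~\ref{prop:parabolic}) after checking via the $T_c$-recursion that the horizontal and slope-one cylinder decompositions have uniform moduli $\tfrac{1}{2}$ and $\tfrac{1}{2c+2}$. The only cosmetic difference is that for the slope-one direction the paper exhibits the saddle connections via an auxiliary affine involution $U$ with $U T_c U^{-1}=T_c^{-1}$ (forcing $\overline{P_{1-i}P_i}$ to have slope one) rather than your intercept parameter $a=y-x$, and your displayed identity $y_{k+1}-y_k=x_k+x_{k+1}$ is exactly the paper's horizontal modulus computation in compressed form.
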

\begin{proof}
Recall, the surface $S_c$ for $c \geq 1$ was built from two pieces $Q_c^+$ and $Q_c^-$. We defined 
$Q_c^+$ to be the convex hull of the vertices 
$P_i=T_c^i(0,0)$ for $i \in \Z$, with $T_c$ as in equation \ref{eq:generalized_rotation}.
Next $Q_c^-$ was defined to be $Q_c^+$ rotated by $\pi$. $S_c$ is built by gluing the edges of $Q_c^+$
to its image under $Q_c^-$ by parallel translation. Indeed, it is obvious from this definition 
that the rotation by $\pi$ which swaps $Q_c^+$ and $Q_c^-$ restricts to an affine automorphism of the surface, $\widehat{-I}_c \in \Aut(S_c)$. The derivative of $\widehat{-I}_c$ is $-I_c=-I$, which therefore lies in $\Gamma(S_c)$.

Now we will see that the reflection in the line $x=0$ induces an affine automorphism ($\hat{A}$). The reflection
is given the map $r:(x,y)\mapsto(-x,y)$. $Q_c^+$ is preserved because $r(P_i)=P_{-i}$, which follows from the fact
that $r \circ T_c \circ r^{-1} = T_c^{-1}$. The reflection acts in the same way on $Q_c^-$, and thus preserves
gluing relations. Thus, $\hat{A}_c$ is an affine automorphism and its derivative, $A_c$, lies in the Veech group.

We will show that each cylinder in the horizontal cylinder decomposition has the same modulus,
which will prove that $\hat{D}_c$ lies in the affine automorphism group by proposition \ref{prop:parabolic}. Let
$P_i=(x_i,y_i)$. The circumference of the $n-th$ cylinder numbered vertically is given by
$C_n=2 x_{n-1}+2 x_{n}$, and the height is $H_n=y_n-y_{n-1}$. Now let $(x_{n-1},y_{n-1})=(\hat x, \hat y)$, so that
by definition of $T_c$, we have $(x_{n},y_{n})=(c \hat{x}+(c-1)\hat{y}+1,(c+1)\hat{x}+c \hat{y}+1)$. This makes
$$C_n=2(c+1)\hat{x}+2(c-1)\hat{y}+2 \quad \textrm{and} \quad H_n=(c+1)\hat{x}+(c-1)\hat{y}+1.$$
So that the modulus of each cylinder is $\frac{1}{2}$. It can be checked that the parabolic fixing the horizontal direction
and acting as a single right Dehn twist in cylinders of modulus $\frac{1}{2}$ is given by $D_c$.

It is not immediately obvious that there is a decomposition into cylinders in the slope $1$ direction. To see this,
note that there is only one eigendirection corresponding to eigenvalue $-1$ 
of the $\SL(2,\R)$ part of the affine transformation
$$U:(x,y) \mapsto (-cx+(c-1)y+1,-(c+1)x+cy+1)$$
is the slope one direction. It also has the property that $U \circ T_c \circ U^{-1}=T_c^{-1}$, which can be used to show that $U$ swaps $P_i$ with $P_{1-i}$. Therefore segment $\overline{P_{1-i} P_i}$ always has slope one.
The $n$-th slope one 
cylinder is formed by considering the union of trapezoid
obtained by taking the convex hull of the points $P_{n}$, $P_{n+1}$, $P_{1-n}$ and $P_{-n}$ and the same trapezoid rotated by $\pi$ inside $Q_c^-$. Now we will show that the moduli of these cylinders are all equal.
The circumference and height of the $n$-th cylinder in this direction is given below.
$$C_n=\sqrt{2}(x_{n}-x_{1-n}+x_{n+1}-x_{-n})$$
$$H_n=\frac{\sqrt{2}}{2}(x_{n+1}-x_{n},y_{n+1}-y_{n}) \cdot (-1,1)$$
Let $P_n=(\hat{x},\hat{y})$. Then $P_{n+1}=(c \hat{x}+(c-1)\hat{y}+1,(c+1)\hat{x}+c \hat{y}+1)$,
$P_{1-n}=((-c)\hat{x}+(c-1)\hat{y}+1, (-c-1)\hat{x}+c \hat{y}+1)$ and
$P_{-n}=(-\hat{x},\hat{y})$.
We have
$$C_n=\sqrt{2}(2c+2)\hat{x} 
\quad \textrm{and} \quad
H_n=\sqrt{2}\hat{x}.$$
The modulus of each cylinder is $\frac{1}{2c+2}$. Thus by proposition \ref{prop:parabolic},
$\hat{E}_c$ lies in the affine automorphism group. Again, we leave it to the reader to check that the derivative 
of $\hat{E}_c$ must be $E_c$. 
\end{proof}

\subsection{A classification of saddle connections}
\label{sect:saddles}
In this subsection, we will classify the directions in $S_c$ where saddle connections can appear. We begin with $S_1$.

Integers $p$ and $q$ are {\em relatively prime}, if $n, \frac{p}{n}, \frac{q}{n} \in \Z$ implies $n= \pm 1$.
We use the notation $\frac{p}{q} \equiv \frac{r}{s} \pmod{2}$ to say that once the fractions are reduced 
to $\frac{p'}{q'}$ and $\frac{r'}{s'}$ so that numerator
and denominator are relatively prime, we have $p' \equiv r' \pmod{2}$ and $q' \equiv s' \pmod{2}$. 
We use $\frac{p}{q} \not\equiv \frac{r}{s} \pmod{2}$ to denote the negation of this statement.

\begin{proposition}[Saddle connections of $S_1$]
\label{prop:S_1_saddles}
Saddle connections $\sigma \subset S_1$ must have integral holonomy $\hol_1(\sigma)\in \Z^2$. 
A direction contains saddle connections if and only if it has rational slope, $\frac{p}{q}$, 
with $\frac{p}{q} \not \equiv \frac{1}{0} \pmod{2}$.
\end{proposition}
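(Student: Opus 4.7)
My plan proceeds in four conceptual steps.

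\emph{First}, I would establish that every saddle connection has integer holonomy. Each vertex of $Q_1^\pm$ lies in $\Z^2$, and each edge identification is given by an integer translation vector (the difference between its two integer endpoints). Consequently, in any development of $S_1$ starting at a cone singularity, all vertices remain in $\Z^2$. The holonomy of any saddle connection $\sigma$, being the displacement between its endpoints in such a development, therefore lies in $\Z^2$. In particular, any direction containing a saddle connection has rational slope.

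\emph{Second}, I would exhibit saddle connections in both the horizontal and the slope-one directions using the cylinder decompositions established in the proof of lemma~\ref{lem:generators}. Each cylinder is bounded by saddle connections parallel to its core. Concretely, the top boundary of the $n$-th horizontal cylinder inside $Q_1^+$ is the horizontal segment from $(-n,n^2)$ to $(n,n^2)$, a horizontal saddle connection of (even) length $2n$. A parallel argument handles the slope-one direction.

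\emph{Third} and crucially, I would show that the vertical direction contains no saddle connections. The key geometric input is that $Q_1^\pm$ has no vertical edges (edges of $Q_1^+$ have odd integer slopes $2n+1$) and is vertically unbounded. As a result, any vertical geodesic in $S_1$ crosses at most one edge: going upward from a point in $Q_1^+$, it stays in $Q_1^+$ and escapes to $y=+\infty$; going downward from $Q_1^+$, it crosses one edge into $Q_1^-$ and descends to $y=-\infty$. Hence any vertical ray emanating from a cone point enters the interior of $Q_1^+$ or $Q_1^-$ and escapes to infinity without meeting another cone point. Some care is required because each cone point supports infinitely many vertical rays (one per vertex in its equivalence class), but the escape-to-infinity argument applies uniformly to each.

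\emph{Fourth}, I would finish using Veech-group symmetry. By theorem~\ref{thm:veech_groups}, $\Gamma(S_1)$ is the principal congruence-two subgroup of $\SL^\pm(2,\Z)$, which acts transitively on each of the three parity classes of primitive integer direction vectors modulo $\pm I$: the classes $(1,0)$, $(1,1)$, and $(0,1) \pmod{2}$ correspond to slopes $p/q$ congruent to $0/1$, $1/1$, and $1/0 \pmod{2}$ respectively. Since affine automorphisms preserve the set of saddle connections, this set of directions is $\Gamma(S_1)$-invariant. Combining with steps two and three, every direction of slope $p/q \not\equiv 1/0 \pmod{2}$ contains saddle connections, while no direction of slope $p/q \equiv 1/0 \pmod{2}$ does. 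Together with step one (which rules out irrational directions), this proves the proposition. The main obstacle throughout is step three, where the infinite cone angle must be handled with the global geometric observation that $Q_1^\pm$ is vertically unbounded and has no vertical edges.
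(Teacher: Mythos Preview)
Your proposal is correct and follows essentially the same four-step outline as the paper: integer holonomy from integer vertices, existence of saddle connections in the horizontal and slope-one directions, absence in the vertical direction, and propagation via the three $\Gamma(S_1)$-orbits on primitive vectors. The paper's proof simply asserts that the vertical direction contains no saddle connections, whereas you supply the geometric argument (vertical rays escape to infinity in the unbounded convex pieces $Q_1^\pm$); this is a genuine addition of detail rather than a different route.
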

\begin{proof}
The holonomy of a saddle connection must be integral, because the surface $S_1$ was built from two (infinite) polygons
with integer vertices. The action of the Veech group $\Gamma(S_1)$, the congruence two subgroup of 
$\SL^\pm(2,\Z)$ preserves the collection of vectors
$$\textit{RP}=\{(p,q)\in \Z^2\smallsetminus\{(0,0)\}~|~\textrm{$p$ and $q$ are relatively prime}\}.$$
Indeed the orbits of $(0,1)$, $(1,1)$, and $(1,0)$ under $\Gamma(S_1)$ are disjoint and cover $\textit{RP}$. 
Thus, up to the affine automorphism group, the geodesic flow in a direction of rational slope looks like the geodesic flow in the horizontal, slope one, 
or vertical directions. All these directions other than the vertical one contain saddle connections.
Therefore, rational directions contain saddle connections unless they are in the orbit of the vertical direction under 
$\Gamma(S_1)$.
\end{proof}

In order to make a similar statement for $S_c$, we will need to describe the directions that contain saddle connections. 
We will find it useful to note that there is a natural bijective correspondence between directions in the plane modulo rotation by $\pi$, and the boundary of the hyperbolic plane $\del \H^2$. 
This can be seen group theoretically. Directions in the plane
correspond to $S^1= \SL(2,\R)/\textrm{H}$ where 
$$\textrm{H}=\{G \in \SL(2,\R)~|~G\big(\left[\begin{array}{c}1 \\ 0\end{array}\right]\big)= 
\left[\begin{array}{c}\lambda \\ 0\end{array}\right] \textrm{ for some $\lambda > 0$}\}.$$
Both directions mod rotation by $\pi$ and the boundary of the hyperbolic plane correspond to
the real projective line, $\RP^1=\SL(2,\R)/\textrm{H}^\pm$, where 
$$\textrm{H}^\pm=\{G \in \SL(2,\R)~|~G\big(\left[\begin{array}{c}1 \\ 0\end{array}\right]\big)= 
\left[\begin{array}{c}\lambda \\ 0\end{array}\right] \textrm{ for some $\lambda\neq 0$}\}.$$

Now consider the left action of the groups $\G^\pm_c$ on $\RP^1$. There are known natural 
semiconjugacies between the actions of $\G^\pm_c$ for $c>1$ on $\RP^1$ and the action of $\G^\pm_1$.
That is, for all $c > 1$ there is a continuous surjective map $\varphi_c:\RP^1 \to \RP^1$ of degree one so that the following
diagram commutes for all $G \in \G^\pm$
\begin{equation}
\label{varphi_c}
\begin{CD}
\RP^2 @>G_c>> \RP^2 \\
@VV\varphi_cV @VV\varphi_cV 
\\
\RP^2 @>G_1>> \RP^2
\end{CD}
\end{equation}
It will also be important that $\phi_c$ {\em weakly preserves orientation}. That is, if 
$({\bf a},{\bf b},{\bf c}) \in (\RP^1)^3$ 
is a positively oriented triple, then $(\phi_c({\bf a}), \phi_c({\bf b}), \phi_c({\bf c}))$ is not a negatively oriented triple.

We may describe the map $\varphi_c:\RP^1 \to \RP^1$ geometrically as follows. Recall, that for $c>1$, the fundamental domain for the action of $\G^\pm_c$ on $\H^2$ is a triangle $\Delta_c$ with two ideal vertices and one 
ultra-ideal vertex. For $c=1$, the fundamental domain of $\G^\pm_1$ is a triangle $\Delta_1$
with three ideal vertices. See the right
and center sides of figure \ref{fig:veechgroup} respectively. We may choose a homeomorphism between the
closures in $\overline{\H}^2$, $f_c:\overline{\Delta}_c \to \overline{\Delta}_1$
which collapses the ultra-ideal vertex to an ideal vertex, and sends the edges to edges. 
By propagating this homeomorphism around by the groups
$\G^\pm_c$ and $\G^\pm_1$ we can find a unique homeomorphism from 
$\hat{f}_c:\overline{\H}^2 \to \overline{\H}^2$ so that
$G_1 \circ f_c=\hat{f}_c \circ G_c |_{\Delta_c}$ for all $G \in \G^\pm$. The desired map $\varphi_c$ is the restriction of the map 
$\hat{f}_c$ to $\RP^2= \del \H^2$.

The map $\varphi_c$ is reminiscent of the famous devil's staircase, a continuous surjective map 
$[0,1] \to [0,1]$ which contracts intervals in the compliment of a Cantor set to points.
Indeed, the limit set $\Lambda_c$ of the group $\G^\pm_c$ is a $\G^\pm_c$ invariant Cantor set, and
the connected components of the domain of discontinuity, $\RP^1 \smallsetminus \Lambda_c$, 
are contracted to points by $\varphi_c$.

Recall, the surfaces $S_c$ for $c\geq 1$ are canonically homeomorphic up to isotopy fixing the singular points.
We let $h_c:S_c \to S_1$ be such a homeomorphism. We will see that the saddle connections in
$S_c$ and in $S_1$ are topologically the same. We will now make this notion rigorous. We will say two paths $\gamma$ and 
$\gamma'$ in a translation surface are {\em homotopic} (relative to their endpoints) if they have the same endpoints and there is a homotopy from 
$\gamma$ to $\gamma'$ which fixes those endpoints. We do not allow this homotopy to pass through singular points. 
We use $[\gamma]$ to denote the equivalence class of all paths homotopic to $\gamma$.

\begin{theorem}[Classification of saddle connections]
\label{thm:classification_of_saddle_connections}
There is a saddle connection in direction $\theta$ of $S_c$ for $c > 1$ if and only if the direction 
$\varphi_c(\theta)$ contains saddle connections in $S_1$. 
Equivalently, $\theta$ contains saddle connections if and only if $\theta$ is an image of
the horizontal or slope one direction under an element of $\G^\pm_c=\langle -I_c, A_c, D_c, E_c \rangle$. 
Furthermore, the collection of homotopy classes are identical in $S_c$ and $S_1$. That is, for all saddle connections 
$\sigma \subset S_c$ there is a saddle connection $\sigma' \in h_c([\sigma])$ in $S_1$, and for
all saddle connections $\sigma' \subset S_1$ there is a saddle connection $\sigma \in h_c^{-1}([\sigma'])$ in $S_c$.
\end{theorem}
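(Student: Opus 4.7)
The plan is to derive the three assertions of the theorem in sequence. The easy ``if'' direction, that any $\theta$ in the $\G^\pm_c$-orbit of $\{0, 1\}$ contains a saddle connection, follows immediately from Lemma~\ref{lem:generators}: $S_c$ admits horizontal and slope-one cylinder decompositions whose boundary components are saddle connections, and applying any $\hat{G}_c \in \hat{\G}_c$ produces a saddle connection in direction $G_c(0)$ or $G_c(1)$. The equivalence of this with the $\varphi_c$-formulation will follow from the semiconjugacy (\ref{varphi_c}) together with Proposition~\ref{prop:S_1_saddles}, once I verify $\varphi_c(0) = 0$ and $\varphi_c(1) = 1$. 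This check uses the fundamental-domain construction of $\varphi_c$: the two ideal vertices of $\Delta_c$ are the parabolic fixed points of $D_c$ and $E_c$ (slopes $0$ and $1$, respectively), and $f_c$ sends them to the corresponding ideal vertices of $\Delta_1$.

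For the homotopy class correspondence, I first handle the direction ``$S_1$ to $S_c$''. Horizontal saddle connections of $S_c$ and $S_1$ correspond under $h_c$ because the polygonal gluings of $Q^\pm_c$ and $Q^\pm_1$ are combinatorially identical and $h_c$ was defined to preserve this edge structure; the same holds for slope-one saddle connections. Since $h_c \circ \hat{G}_c$ is isotopic (rel.\ singularities) to $\hat{G}_1 \circ h_c$ for every $G \in \G^\pm$ --- the topological equivalence of Veech-group actions recorded after Theorem~\ref{thm:affine_automorphisms} --- this correspondence propagates through all $\hat{\G}_c$-translates. Since by Proposition~\ref{prop:S_1_saddles} every saddle connection of $S_1$ is a $\hat{\G}_1$-translate of a horizontal or slope-one one, I obtain for each saddle connection $\sigma' \subset S_1$ a saddle connection $\sigma \subset S_c$ with $[h_c(\sigma)] = [\sigma']$.

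The reverse direction is the main step. Given a saddle connection $\sigma \subset S_c$, the homotopy class $[h_c(\sigma)]$ admits a unique geodesic representative in $S_1$, necessarily a concatenation of saddle connections $\sigma'_1 \cdots \sigma'_k$. The main claim is $k = 1$: then $\sigma'_1$ is the desired saddle connection in $[h_c(\sigma)]$, and via the preceding paragraph's bijection its $h_c$-preimage class contains a saddle connection that, by uniqueness of geodesic representatives, must coincide with $\sigma$ --- forcing the direction of $\sigma$ into the $\G^\pm_c$-orbit of $\{0, 1\}$. The obstacle is ruling out $k \geq 2$: pulling each $\sigma'_i$ back through the bijection produces saddle connections $\tau_i \subset S_c$ whose concatenation $\tau_1 \cdots \tau_k$ is homotopic to $\sigma$, so straightening in $S_c$ must ``absorb'' every intermediate singularity --- the short-side angle of the $\tau$-concatenation at each such singularity must be strictly less than $\pi$ --- while in $S_1$ the analogous short-side angle is at least $\pi$ by the geodesicity of $\sigma'_1 \cdots \sigma'_k$. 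Since the combinatorial sector structure at the (infinite-angle) cone singularities is preserved by $h_c$ while the metric angles depend on $c$, resolving this apparent discrepancy will require a direct calculation showing that the inequality ``short-side angle $\geq \pi$'' is preserved under the deformation $c \mapsto 1$; this sector analysis is the most delicate part of the proof.
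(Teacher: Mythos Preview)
Your outline for the ``if'' direction and for the implication ``$S_1$ saddle connection $\Rightarrow$ $S_c$ saddle connection'' is essentially what the paper does (this is Proposition~\ref{prop:lemma_satisfied}, item~2). The divergence is in the hard direction, ``$S_c$ saddle connection $\Rightarrow$ $S_1$ saddle connection,'' and here your proposal has a genuine gap.

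You propose to straighten $h_c(\sigma)$ to a broken geodesic $\sigma'_1\cdots\sigma'_k$ in $S_1$, pull each piece back to $\tau_i \subset S_c$, and argue that $k\ge 2$ forces incompatible angle conditions at the intermediate singularities. You then defer the actual comparison of angles to an unspecified ``sector analysis.'' This is where the argument stalls: the cone points of $S_c$ have \emph{infinite} cone angle, so the tangent circle at a singularity is a copy of $\R$, not $S^1$, and the condition ``angle $\ge \pi$ on both sides'' is a statement about which half-plane sectors the two saddle connections occupy. Those sectors are determined by the full cyclic ordering of separatrix directions at the cone point, and this ordering changes with $c$ in a way that is not controlled by the combinatorics of $h_c$ alone. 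In particular, there is no local reason why ``angle $\ge\pi$ in $S_1$'' should imply ``angle $\ge\pi$ in $S_c$'' for the corresponding pair: the metric angles between pairs of saddle-connection directions genuinely vary with $c$. What \emph{is} preserved is something weaker and more projective --- the \emph{sign} of the wedge of any two saddle-connection holonomies --- and extracting the geodesic conclusion from that weaker invariant requires a global argument, not a vertex-by-vertex one.

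The paper does exactly this. Its Lemma~\ref{lem:same_saddles} replaces your angle analysis with a complexity induction relative to a fixed triangulation: given a saddle connection $\sigma$ in the target surface, one develops the chain of triangles it crosses, finds a convex quadrilateral $Q$ containing $\sigma$ as a diagonal whose four sides have strictly smaller complexity, and inductively pulls the four sides back to saddle connections in the source. The only thing needed to finish is that the pulled-back quadrilateral $Q'$ is still strictly convex, so that its diagonal exists --- and strict convexity is exactly the sign-of-wedge condition (item~3 of the lemma). That condition is then verified (Proposition~\ref{prop:lemma_satisfied}) using the semiconjugacy $\varphi_c$: because $\varphi_c$ is weakly orientation-preserving and is injective on the parabolic fixed points, it preserves the cyclic order of saddle-connection directions, hence the sign of the wedge. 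Your proposal never invokes this sign-preservation property, and without it (or an equivalent) the sector analysis you describe cannot be completed.
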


We will prove this theorem by first proving a more abstract lemma. Then we will demonstrate that $S_c$ and $S_1$ satisfy the 
conditions of the lemma.

In the statement of the lemma, we use the concept of the holonomy of a saddle connection. Given any oriented 
path $\gamma:[0,1] \to S$ in a translation  surface which avoids the singularities on $(0,1)$,
there is a canonical development of $\gamma$ into the plane,  $\dev(\gamma):[0,1] \to \R^2$, 
which is a path in the plane well defined up to translation. The holonomy vector
$\hol(\gamma)$ is obtained by subtracting the endpoint of $\dev(\gamma)$ from its starting point. The quantity
$\hol(\gamma)$ is homotopy invariant. The notions of holonomy and the developing map are common in the world of
$(G,X)$ structures; see section $3.4$ of \cite{Thurston}, for instance.

The wedge product between two vectors in $\R^2$ is given by
\begin{equation}
\label{eq:wedge}
(a,b) \wedge (c,d)=ad-bc.
\end{equation}
This is also the signed area of the parallelogram formed by the two vectors.

The function $\textit{sign}:\R \to \{-1,0,1\}$ assigns one to positive numbers, zero to zero, and $-1$ to negative numbers.

\begin{lemma}
\label{lem:same_saddles}
Let $h:S \to T$ be a homeomorphism between translation surfaces satisfying the following statements.
\begin{enumerate}
\item $S$ admits a triangulation by saddle connections.
\item For every saddle connection $\sigma \subset S$ the homotopy class $h([\sigma])$ contains a saddle connection
of $T$.
\item Every pair of saddle connections $\sigma_1, \sigma_2 \subset S$ satisfies
$$\textit{sign} \big( \hol(\sigma_1) \wedge \hol(\sigma_2) \big) = 
\textit{sign} \big( \hol ( h (\sigma_1)) \wedge \hol(h(\sigma_2)) \big).$$
\end{enumerate}
Then, for every saddle connection $\sigma \subset T$, the homotopy class $h^{-1}([\sigma])$ contains a saddle connection
of $S$.
\end{lemma}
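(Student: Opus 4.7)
The plan is to push forward the triangulation $\mathcal{T}$ of $S$ to a triangulation $\mathcal{T}^\#$ of $T$ by saddle connections, use $\mathcal{T}^\#$ to code an arbitrary saddle connection of $T$, and then lift the code back to produce a saddle connection of $S$. Throughout, the role of hypothesis (3) is to translate ``which side of a saddle connection a given vertex lies on'' from $T$ to $S$.

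First I would construct $\mathcal{T}^\#$. Fix a triangulation $\mathcal{T}$ of $S$ by saddle connections using (1). Since geodesic representatives of homotopy classes of paths are unique in a translation surface (cone angles are at least $2\pi$, so the metric is locally $\mathrm{CAT}(0)$), hypothesis (2) supplies for each edge $e$ of $\mathcal{T}$ a unique saddle connection $e^\# \subset T$ with $e^\# \in h([e])$. For each triangle $\Delta \in \mathcal{T}$ with edges $e_1,e_2,e_3$ the loop $e_1^\# \cup e_2^\# \cup e_3^\#$ is null-homotopic in $T$ (since $h(\partial \Delta)$ is), so it bounds a flat triangular disk $\Delta^\#$. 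I would then use (3) to check that $\mathcal{T}^\# = \{\Delta^\#\}$ is an embedded triangulation of $T$: two triangles sharing an edge $e^\#$ lie on opposite sides of $e^\#$ because ``which side'' is determined by the sign of a wedge product of holonomies of saddle connections meeting at a shared vertex, and this sign is preserved by (3). Local embeddedness plus the combinatorial isomorphism $\mathcal{T} \cong \mathcal{T}^\#$ inherited from $h$ should then yield a global triangulation of $T$.

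Next, given any saddle connection $\sigma \subset T$: if $\sigma$ is an edge of $\mathcal{T}^\#$, the corresponding edge of $\mathcal{T}$ is a saddle connection in $h^{-1}([\sigma])$ and we are done. Otherwise $\sigma$ transversally crosses a finite sequence of edges $e_1^\#,\ldots,e_n^\#$ of $\mathcal{T}^\#$, passing through triangles $\Delta_0^\#,\ldots,\Delta_n^\#$. Let $e_i \in \mathcal{T}$ and $\Delta_i \in \mathcal{T}$ be the corresponding edges and triangles; I would develop the union $U = \bigcup_i \Delta_i$ into $\R^2$ using $S$'s translation structure, obtaining a planar strip $\tilde U$. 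The endpoints of $h^{-1}(\sigma)$ develop to definite points $\tilde x, \tilde y \in \tilde U$, and my candidate saddle connection in $S$ will be the projection of the straight segment $\overline{\tilde x \tilde y}$.

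The main obstacle will be to verify that $\overline{\tilde x \tilde y} \subset \tilde U$ and transversally crosses each $\tilde e_i$ in order. In $T$ this is automatic, since the corresponding developed segment is $\sigma$ itself. In $S$ the condition becomes a sign requirement on each wedge product $(\tilde y - \tilde x) \wedge (\tilde v - \tilde x)$ as $\tilde v$ ranges over boundary vertices of $\tilde U$, where $\tilde y - \tilde x = \hol(h^{-1}(\sigma))$ and $\tilde v - \tilde x$ are both sums of holonomies of edges of $\mathcal{T}$ along the boundary of the strip. The hard part is that (3) controls wedge products of individual saddle connection holonomies, not of sums; I would attempt to reduce each wedge-of-sums to a controlled combination of pairwise wedges, for example by traversing the strip triangle by triangle and inducting on the code length $n$, at which point (3) would deliver in $S$ the matching signs that already hold in $T$.
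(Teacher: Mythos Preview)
Your first step, pushing the triangulation forward to $\mathcal{T}^\#$, matches the paper. The difficulty you flag in your last paragraph is the real obstruction, and the induction on the code length $n$ as you sketch it does not close it: the condition $\overline{\tilde x\tilde y}\subset\tilde U$ amounts to a sign condition on $(\tilde y-\tilde x)\wedge(\tilde v-\tilde x)$ for every boundary vertex $\tilde v$ of the strip, and both factors are genuine sums of several edge holonomies. Hypothesis~(3) gives the sign of each individual cross term, but those cross terms need not all agree, so the sign of the sum is uncontrolled. Peeling off one triangle at a time does not reduce this to a single pairwise wedge; already for a strip of three triangles one of the required signs is a two-term sum whose summands can have opposite signs in $T$.

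The paper circumvents this with a different inductive setup. One still measures a saddle connection $\sigma\subset T$ by its \emph{complexity} (the number of crossings with $\mathcal{T}^\#$) and argues by minimal counterexample, but instead of trying to thread the entire strip, one develops $\sigma$ together with the triangles it meets into the plane, picks the single vertex $v_+$ closest to $\sigma$ from above and the single vertex $v_-$ closest from below, and takes the convex hull of $\sigma\cup\{v_+,v_-\}$. This is a quadrilateral $Q$ contained in the developed chain with $\sigma$ as a diagonal. Its four sides $\nu_1,\dots,\nu_4$ are saddle connections of $T$ of strictly smaller complexity than $\sigma$, so by minimality each $h^{-1}([\nu_i])$ already contains a saddle connection $\nu_i'$ of $S$. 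The key point is that strict convexity of a quadrilateral is decided by the signs of wedges of \emph{consecutive sides only}---pairwise wedges of four honest saddle connections---which is exactly what (3) transports. Hence the $\nu_i'$ bound a strictly convex quadrilateral in $S$, and its diagonal is the desired saddle connection in $h^{-1}([\sigma])$.

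So the idea you are missing is to spend the inductive hypothesis first, to manufacture four auxiliary saddle connections in $S$ (the sides of $Q$), and only then invoke (3) on single pairs rather than on sums.
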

\begin{proof}
Let ${\mathcal T}_S$ be the triangulation of $S$ by saddle connections given to us by item 1. 
By item 2, we we can straighten $h({\mathcal T}_S)$ to a triangulation ${\mathcal T}_T$ of $T$ by saddle connections. 

We define the {\em complexity} of a saddle connection $\sigma \subset T$ relative to the triangulation ${\mathcal T}_T$ to be the number of
times $\sigma$ crosses a saddle connection in ${\mathcal T}_T$. 
We assign the saddle connections in ${\mathcal T}_T$ complexity zero.
Supposing the conclusion of the lemma is false, there exists at least one saddle connection $\sigma \subset T$ so
that $h^{-1}([\sigma])$ contains no saddle connection of $S$. We may choose such a saddle connection $\sigma \subset T$ so that it
has minimal complexity with respect to ${\mathcal T}_T$. By the remarks above this minimal complexity must be at least one.

\begin{figure}[h]
\begin{center}
\includegraphics[width=3.5in]{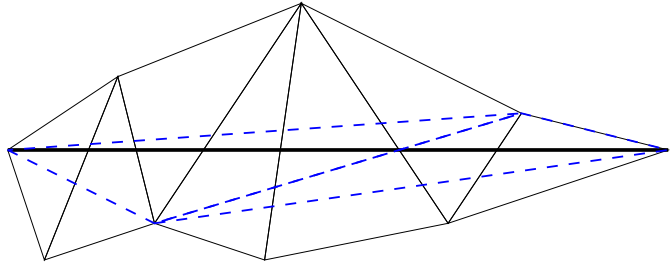}
\caption{The saddle connection $\sigma$ is developed into the plane along with the triangles in ${\mathcal T}_T$ that it intersects.}
\label{fig:saddleconnection}
\end{center}
\end{figure}

Now, we will reduce to a case similar to the complexity one case. We may rotate the surface $T$ so that $\sigma$ is horizontal
and develop the triangles $\sigma$ intersects into the plane as a chain of triangles. See figure \ref{fig:saddleconnection}. Now choose vertices
$v_+$ and $v_-$ below and above $\sigma$ which lie closest to $\sigma$. The convex hull of $\{\sigma, v_+, v_-\}$ is a quadrilateral $Q$ which is
contained in the developed chain of triangles. 
The boundary of $Q$ consists of four saddle connections $\nu_1, \ldots, \nu_4$ with complexity relative to ${\mathcal T}_T$
less than that of $\sigma$. Because we assumed $\sigma$ had minimal complexity, there are saddle connections 
$\nu'_1, \ldots, \nu'_4 \subset S$ in the homotopy classes $h^{-1}([\nu_1]), \ldots, h^{-1}([\nu_4])$ respectively. Finally, because
of item 3, the saddle connections $\nu'_1, \ldots, \nu'_4$ in $S$ must also form a strictly convex quadrilateral $Q'$. 
The quadrilateral $Q'$ must then have diagonals, one of which lies in the homotopy class $h^{-1}([\sigma])$. 
See figure \ref{fig:destroyedsaddle}.
\end{proof}

\begin{figure}[h]
\begin{center}
\includegraphics[width=2in]{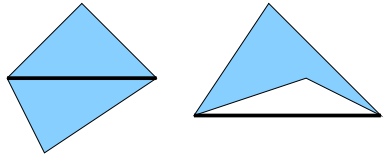}
\caption{To destroy a diagonal of a quadrilateral, the quadrilateral must be made non-convex. This violates property 3 of lemma 
\ref{lem:same_saddles}.}
\label{fig:destroyedsaddle}
\end{center}
\end{figure}

The following proposition implies the classification of saddle connections, theorem \ref{thm:classification_of_saddle_connections}.

\begin{proposition}
\label{prop:lemma_satisfied}
The canonical homeomorphism $h_c^{-1}:S_1 \to S_c$ satisfies the conditions of lemma \ref{lem:same_saddles}.
\end{proposition}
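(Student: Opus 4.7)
The plan is to verify the three hypotheses of Lemma~\ref{lem:same_saddles} for $h = h_c^{-1} \colon S_1 \to S_c$ in turn, handling (1) and (2) quickly and focusing on the sign condition (3), which is the subtle point. For hypothesis~(1), I triangulate $S_1$ by saddle connections by triangulating the convex polygon $Q_1^+$ via the diagonals from $(0,0)$ to each other vertex, and likewise for $Q_1^-$; all edges and diagonals descend to saddle connections of $S_1$.

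For hypothesis~(2), let $\sigma \subset S_1$ be a saddle connection. Proposition~\ref{prop:S_1_saddles} provides $\hat{G}_1 \in \Aut(S_1)$ sending $\sigma$ to a horizontal or slope-one saddle connection $\sigma_0$. By Lemma~\ref{lem:generators}, $S_c$ has cylinder decompositions in the horizontal and slope-one directions and so contains saddle connections in these directions. Theorem~\ref{thm:affine_automorphisms} states that the action of $\Aut(S_c)$ on $S_c$ is topologically the same as that of $\Aut(S_1)$ on $S_1$ via the canonical homeomorphism $h_c$, so $h_c^{-1}([\sigma_0])$ contains a saddle connection $\tau_0 \subset S_c$ in the corresponding direction; then $\hat{G}_c^{-1}(\tau_0)$ is a saddle connection of $S_c$ lying in the homotopy class $h_c^{-1}([\sigma])$.

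For hypothesis~(3), I use the semiconjugacy $\varphi_c \colon \RP^1 \to \RP^1$ from diagram~(\ref{varphi_c}), together with its weak orientation-preservation and the observation that, by Theorem~\ref{thm:classification_of_saddle_connections}, the directions of saddle connections in $S_c$ all lie in the limit set $\Lambda_c$ of $\G^\pm_c$, on which $\varphi_c$ restricts to a strictly orientation-preserving bijection onto $\RP^1$. Given oriented saddle connections $\sigma_1, \sigma_2 \subset S_1$ with corresponding oriented saddle connections $\sigma_1', \sigma_2' \subset S_c$, I would apply $\hat{G}_1 \in \Aut(S_1)$ sending $\sigma_1$ to a rightward-oriented horizontal saddle connection; Theorem~\ref{thm:affine_automorphisms}, combined with the fact that $\varphi_c$ fixes the horizontal direction as a cusp of $\G^\pm_c$, guarantees that the corresponding $\hat{G}_c \in \Aut(S_c)$ sends $\sigma_1'$ to a horizontal saddle connection as well. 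The wedge sign $\textit{sign}(\hol_1(\sigma_1) \wedge \hol_1(\sigma_2))$ is then the sign of the $y$-coordinate of $\hol_1(\hat{G}_1(\sigma_2))$, which is determined by which of the two open arcs of $\RP^1 \setminus \{[\text{horizontal}]\}$ its $\RP^1$-direction lies in, together with the orientation. Strict monotonicity of $\varphi_c$ on $\Lambda_c$ and the fact that $\varphi_c$ fixes the horizontal point identify these two arcs on the two sides in a sign-preserving way.

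The main obstacle is tracking orientations coherently under $h_c^{-1}$ after the reduction: I need to verify that $\hat{G}_c(\sigma_1')$ is oriented rightward on $S_c$ whenever $\hat{G}_1(\sigma_1)$ is on $S_1$, and that the sign of the $y$-coordinate of $\hol_c(\hat{G}_c(\sigma_2'))$ matches that of $\hol_1(\hat{G}_1(\sigma_2))$. This is a combinatorial consistency check flowing from the explicit topological description of the generators $\widehat{-I}_c$, $\hat{A}_c$, $\hat{D}_c$, $\hat{E}_c$ in Theorem~\ref{thm:affine_automorphisms}, which realizes the same $\G^\pm$-action on both surfaces up to $h_c$ and isotopy fixing the singular set.
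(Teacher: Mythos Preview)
Your items (1) and (2) are fine and match the paper, except that you should cite Lemma~\ref{lem:generators} rather than Theorem~\ref{thm:affine_automorphisms}: the full affine automorphism theorem is only completed \emph{after} the saddle-connection classification, so invoking it here is premature. What you actually use---that $\widehat{-I}_c,\hat A_c,\hat D_c,\hat E_c$ lie in $\Aut(S_c)$ and act on $S_c$ as the corresponding elements act on $S_1$ up to $h_c$ and isotopy---is exactly the content of Lemma~\ref{lem:generators}.

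There are two genuine problems in item~(3). First, you invoke Theorem~\ref{thm:classification_of_saddle_connections} to say that saddle-connection directions in $S_c$ lie in $\Lambda_c$, but that theorem is deduced \emph{from} the proposition you are proving, so this is circular. The fix is easy: the specific $\sigma_1',\sigma_2'$ you need are, by your own construction in item~(2), $\hat G_c$-images of horizontal or slope-one saddle connections, so their directions are $\G^\pm_c$-images of the horizontal or slope-one direction---i.e.\ parabolic fixed points---without appeal to the classification. Second, your claim that $\varphi_c$ restricts to a bijection $\Lambda_c\to\RP^1$ is false: the two endpoints of each gap in $\Lambda_c$ map to the same point. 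What you actually need, and what the paper proves, is that $\varphi_c$ is injective on parabolic fixed points; this follows because a parabolic $P_c\in\G^\pm_c$ fixes such a direction, the corresponding $P_1\in\G^\pm_1$ fixes its image, and equivariance forces the $\varphi_c$-preimage of a parabolic fixed point to be a single point.

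The paper handles your ``main obstacle''---tracking orientations---more cleanly than your normalization argument: it lifts $\varphi_c$ to $\tilde\varphi_c\colon S^1\to S^1$ fixing $(1,0)$, observes that this lift still satisfies the commutative diagram, and then the sign identity $\textit{sign}\big(G_1(\mathbf v)\wedge G_1'(\mathbf v')\big)=\textit{sign}\big(G_c(\mathbf v)\wedge G_c'(\mathbf v')\big)$ follows directly from weak orientation-preservation of $\tilde\varphi_c$ together with injectivity on parabolic directions. This avoids the case-by-case ``combinatorial consistency check'' you defer to the reader.
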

\begin{proof}
Item 1 is trivial. We leave it to the reader to triangulate $S_1$. 

Item 2 follows from proposition \ref{prop:S_1_saddles} together with work done in subsection \ref{sect:generators}. By proposition 
\ref{prop:S_1_saddles} all saddle connections of $S_1$ are the images of saddle connections in the horizontal and slope one directions
under $\G^{\pm}_1$. By work done in subsection \ref{sect:generators}, we know that for each saddle connection $\sigma$ 
in the horizontal and slope one directions that appears in $S_1$, there is a saddle connection $\sigma' \in h_c^{-1}([\sigma])$ in
$S_c$. Furthermore by lemma \ref{lem:generators}, the group action of $\langle \widehat{-I}_c, \hat{A}_c, \hat{D}_c, \hat{E}_c \rangle \subset \Aut(S_c)$ on $S_c$ 
is the same as the group action of 
$\langle \widehat{-I}_1, \hat{A}_1, \hat{D}_1, \hat{E}_1 \rangle \subset \Aut(S_1)$ 
on $S_1$ up to $h_c$ and isotopies preserving the singularities. Therefore,
for every saddle connection $\sigma \subset S_1$, there is a saddle connection $\sigma' \in h_c^{-1}([\sigma])$.

Now we show item 3 holds.
Let $\sigma$ and $\sigma'$ be saddle connections in $S_1$. 
Let $\theta_0$, $\theta_1$ be the horizontal and slope one directions in the circle $S^1$, respectively.
Then we can choose ${\bf v}, {\bf v}' \in \{\theta_0, \theta_1\}$ and $G_1, G'_1 \in \G^\pm_1$ 
such that $G_1({\bf v})=\hol_1(\sigma)$ and $G_1'({\bf v}')=\hol_1(\sigma')$. It follows that
the corresponding elements $G_c, G_c' \in \G^\pm_c$ satisfy
$G_c({\bf v})=\hol_c \circ h_c^{-1}(\sigma)$ and $G_c'({\bf v}')=\hol_c \circ h_c^{-1}(\sigma')$. 
We must prove that 
$$\textit{sign} \big( G_1({\bf v}) \wedge G_1'({\bf v}') \big)=\textit{sign} \big( G_c({\bf v}) \wedge G_c'({\bf v}') \big).$$
This follows essentially from the description of the map $\varphi_c$.
Let $\tilde \varphi_c:S^1 \mapsto S^1$ be a lift of $\varphi_c:\RP^1 \to \RP^1$. 
This lift is unique if we assume the rightward direction is preserved,  $\tilde \varphi_c(1,0)=(1,0)$. 
The map $\tilde \varphi_c$ satisfies the commutative diagram 
 \ref{varphi_c} with $\RP^1$ replaced by $S^1$. Using this diagram we may rewrite the above equation as
 $$\textit{sign} \big( \tilde \varphi_c \circ G_c({\bf v}) \wedge  \tilde \varphi_c \circ G_c'({\bf v}') \big)=\textit{sign} \big( G_c({\bf v}) \wedge G_c'({\bf v}') \big).$$
Since, $\varphi_c$ weakly preserves orientation, it follows that $\tilde \varphi_c$ weakly preserves sign of $\wedge$. That is, if ${\bf u}$ and ${\bf w}$ are two unit vectors, then 
$$\textit{sign}\big( \tilde \varphi_c({\bf u}) \wedge \tilde \varphi_c({\bf w}) \big)=0 
\quad \textrm{ or } \quad
\textit{sign}\big( \tilde \varphi_c({\bf u}) \wedge \tilde \varphi_c({\bf w}) \big)={\bf u} \wedge {\bf w}.$$
Thus, our only fear is that $G_c({\bf v})$ and $G_c'({\bf v}')$ are distinct in $\RP^1$, but their images
$\varphi_c \circ G_c({\bf v})$ and $\varphi_c \circ G_c'({\bf v}')$ were not distinct. But directions containing saddle connections have parabolics that preserve them. So there are parabolics $P_c, P_c' \in \G^\pm_c$ which fix
$G_c({\bf v})$ and $G_c'({\bf v}')$ in $\RP^1$, respectively. The corresponding parabolics $P_1, P_1' \in \G^\pm_1$
fix the directions $\varphi_c \circ G_c({\bf v})$ and $\varphi_c \circ G_c'({\bf v}')$ in $\RP^1$. 
Now, by the commutative diagram \ref{varphi_c}, the points $\varphi_c \circ G_c({\bf v})$ and $\varphi_c \circ G_c'({\bf v}')$ in $\RP^1$
have unique preimages under $\varphi_c^{-1}$. So, 
$\varphi_c \circ G_c({\bf v})$ and $\varphi_c \circ G_c'({\bf v}')$ must be distinct after all.
\end{proof}

The next theorem does not contribute to the classification of affine automorphisms, but culminates the
discussion in this section. Suppose we have a triangulation ${\mathcal T}$ of a translation surface $S$.
The triangulation can be used to code trajectories under the geodesic flow on $S$.
We code every trajectory by the sequence of saddle connections in ${\mathcal T}$
that the trajectory crosses.
We call this the {\em orbit-type} of the trajectory.

For some ${c_1} \geq 1$ fix a  triangulation ${\mathcal T}_{c_1}$ of $S_{c_1}$. 
By theorem \ref{thm:classification_of_saddle_connections},
there are identical triangulations ${\mathcal T}_c$ of every $S_c$ with $c \geq 1$.
The saddle connections in each ${\mathcal T}_c$ are homotopic 
to the corresponding saddle connection in ${\mathcal T}_{c_1}$
up to the canonical homeomorphism $S_c \to S_{c_1}$.

\begin{theorem}
Let ${\mathcal T}_c$ be a triangulation of $S_c$ for each $c \geq 1$, as described above.
Then, given any $c_1,c_2\geq 1$ and any finite or infinite trajectory $\gamma_{1}$ 
in $S_{c_1}$, there is a trajectory $\gamma_{2}$ in $S_{c_2}$ with the same orbit type.
\end{theorem}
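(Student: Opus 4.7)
The plan is to first handle finite orbit types by an induction modeled on the convex-quadrilateral argument in the proof of lemma \ref{lem:same_saddles}, and then to pass from finite to infinite orbit types by compactness.

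For the finite case, suppose $\gamma_1$ crosses saddle connections $\sigma_1,\ldots,\sigma_n$ of ${\mathcal T}_{c_1}$ in order, traversing triangles $T_0,\ldots,T_n$. By theorem \ref{thm:classification_of_saddle_connections}, these correspond under the canonical homeomorphism to saddle connections $\sigma_1',\ldots,\sigma_n'$ and triangles $T_0',\ldots,T_n'$ in ${\mathcal T}_{c_2}$ with the same combinatorial adjacencies. I would develop both chains into $\R^2$ by successive unfolding across shared edges to obtain planar chains $P_1$ and $P_2$, with $\gamma_1$ developing to a straight segment $\tilde\gamma_1\subset P_1$ from $T_0$ to $T_n$. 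The existence of $\tilde\gamma_1$ is equivalent to the linear-separation statement that the vertices of $P_1$ lying above $\tilde\gamma_1$ can be strictly separated from those below by a line. Under the combinatorial vertex correspondence between $P_1$ and $P_2$, the same partition of vertices of $P_2$ must be shown to be separable; this I would prove by a minimal-complexity reduction modeled on the proof of lemma \ref{lem:same_saddles}. When $n\leq 2$ the separation reduces to the convexity of a single quadrilateral and follows directly from item 3 of lemma \ref{lem:same_saddles}. For larger $n$, one selects a diagonal saddle connection of the chain that cuts $P_2$ into two sub-chains of strictly smaller complexity, each admitting a transversal by inductive hypothesis; the convexity of the quadrilateral across the cut, again transferred from $P_1$ by item 3, then permits the two transversals to be concatenated into a single straight transversal of $P_2$, yielding the desired $\gamma_2$.

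For the infinite case, let $U_n\subset\overline{T_0'}\times S^1$ be the open set of initial data whose $S_{c_2}$-trajectory realizes the first $n$ edges of the orbit type of $\gamma_1$; by the finite case, each $U_n$ is nonempty. Choose $(p_n,\theta_n)\in U_n$ and, using compactness of $\overline{T_0'}\times S^1$, extract a convergent subsequence $(p_n,\theta_n)\to(p,\theta)$. If the forward trajectory of $(p,\theta)$ avoids $\Sigma$, continuity of the flow on $S_{c_2}\smallsetminus\Sigma$ forces it to realize every initial segment of the orbit type, finishing the argument. If not, then the set of bad initial data whose trajectories hit $\Sigma$ is a countable union of lower-dimensional subsets of $\overline{T_0'}\times S^1$, so a small perturbation inside $\bigcap_N \overline{U_N}$ produces a valid $(p,\theta)$.

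The principal obstacle is the separation step in the finite case. Item 3 of lemma \ref{lem:same_saddles} controls only the signs of wedge products of pairs of individual saddle connections, whereas the separation of $P_2$'s vertex partition a priori involves displacement vectors between arbitrary pairs of vertices of the chain, which are only telescoping sums of edge holonomies. The positivity of such a sum does not follow from pairwise sign data alone, which is precisely why the argument must be organized as a minimal-complexity induction that reduces each step to a quadrilateral-level convexity statement—the only regime in which item 3 applies directly.
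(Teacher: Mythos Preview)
Your inductive strategy for the finite case has a real gap at the concatenation step. Having a straight transversal to the sub-chain $\sigma_1',\ldots,\sigma_k'$ and a separate straight transversal to $\sigma_k',\ldots,\sigma_n'$ does not by itself produce a single line transversal to $\sigma_1',\ldots,\sigma_n'$: the two lines need only meet the common segment $\sigma_k'$, and in general they meet it at different points and at different angles. ``Convexity of the quadrilateral across the cut'' is not the right invariant here; what you actually need is that the convex set of lines transversal to the first sub-chain meets the convex set of lines transversal to the second, and item 3 of lemma \ref{lem:same_saddles} (a statement about pairs of saddle-connection directions) does not deliver that. You correctly identify in your last paragraph that the obstruction is global rather than pairwise, but the proposed induction does not in fact reduce it to a quadrilateral statement.

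The paper avoids induction entirely. It develops the whole chain of triangles at once and forms the \emph{slalom hull} $\textit{SH}$, the union of all lines crossing every $s_i$. The boundary of $\textit{SH}$ is a finite \emph{saddle chain} (an upper chain, a lower chain, and two diagonals) consisting of saddle connections of $S$; these transfer to saddle connections of $S'$, and item 3 guarantees their directions keep the same cyclic order, so the transferred chain again bounds two disjoint convex regions and hence a nonempty hull $\textit{SH}'$. The genuine work is then to show that the \emph{remaining} vertices of the developed triangles in $S'$---those not already on the saddle chain---stay outside $\textit{SH}'$. For a vertex whose segment $s_i'$ would cross an edge of the chain this is immediate (it would destroy a saddle connection of $S'$); for a vertex whose $s_i'$ crosses one of the four boundary rays the paper uses a shortest-path argument: the geodesic in $S$ from the relevant chain vertex to the offending vertex is a concatenation of saddle connections turning only rightward by total angle less than $\pi$, and the corresponding chain in $S'$ must do the same, forcing the endpoint out of $\textit{SH}'$. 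This global hull argument is exactly what replaces your missing concatenation step.

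Your treatment of the infinite case is close to the paper's (which simply says ``look at longer and longer finite subtrajectories''), but the final perturbation is not safe as stated: $\bigcap_N \overline{U_N}$ can have empty interior, so there may be no room to perturb away from $\Sigma$-hitting data.
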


Again, this theorem is implied by a more general statement.

\begin{lemma}
Suppose $h:S \to S'$ is a homeomorphism between translation surfaces satisfying the three statements of
lemma \ref{lem:same_saddles}. Let ${\mathcal T}$ be a triangulation of $S$ and
${\mathcal T}'$ be the corresponding triangulation of $S'$.
Then, given any finite or infinite trajectory $\gamma$  
in $S$, there is a trajectory $\gamma'$ in $S'$ with the same orbit type.
\end{lemma}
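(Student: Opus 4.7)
The plan is to develop $\gamma$ in $S$ into the plane along with the chain of triangles of ${\mathcal T}$ it crosses, then construct the analogous chain from the corresponding triangles of ${\mathcal T}'$ in $S'$, and argue that this $S'$-chain admits a straight line that crosses the developed edges in the same order.

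First, for the finite case, suppose $\gamma$ crosses the saddle connections $e_1, \ldots, e_n$ in ${\mathcal T}$. Let $e'_i \in {\mathcal T}'$ denote the corresponding saddle connection in $S'$. Developing $\gamma$ yields a planar chain of triangles $\tilde T_1, \ldots, \tilde T_{n-1}$ glued along developed edges $\tilde e_2, \ldots, \tilde e_{n-1}$, with $\gamma$ a straight segment crossing $\tilde e_1, \ldots, \tilde e_n$ in turn. I would construct the analogous planar chain $\tilde T'_1, \ldots, \tilde T'_{n-1}$ for $S'$ by gluing the corresponding triangles along the corresponding shared edges. The goal becomes: find a direction $v' \in S^1$ and a starting point $p' \in \tilde e'_1$ such that the line through $p'$ in direction $v'$ crosses each $\tilde e'_i$ in order.

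The existence of $(v', p')$ reduces to a finite collection of sign inequalities on wedge products of the form $u \wedge w$, where $u$ and $w$ are displacements between vertices of the chain, hence signed sums of saddle connection holonomies (using that edge holonomies around each triangle sum to zero). The main step, and the principal obstacle, is converting these sign conditions on sums of holonomies into sign conditions on pairs of holonomies, so that condition 3 of lemma \ref{lem:same_saddles} applies directly. I would accomplish this by refining the developed chain using diagonal saddle connections of $S$, which correspond to saddle connections of $S'$ by lemma \ref{lem:same_saddles} itself. Working triangle by triangle in this refinement, each sign condition localizes to a pair of saddle connections meeting at a vertex, in direct analogy with the convexity argument at the heart of the proof of lemma \ref{lem:same_saddles}: a triangle through which the trajectory passes is strictly convex at each vertex, and strict convexity of a triple of saddle connections is exactly what condition 3 preserves.

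For the infinite case, finite approximations yield a decreasing nested sequence of nonempty open subsets $U_n \subset S^1 \times \overline{\tilde e'_1}$ consisting of direction and starting-point pairs that realize the first $n$ crossings. By compactness of $S^1 \times \overline{\tilde e'_1}$, the intersection $\bigcap_n U_n$ is nonempty; any element of the intersection defines a trajectory in $S'$ with the required infinite orbit type.
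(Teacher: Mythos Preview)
Your overall architecture is right: develop the trajectory together with its chain of triangles, transfer the chain to $S'$, and find a straight line through the transferred chain. But the step you flag as ``the principal obstacle'' is not actually resolved by your plan, and this is where the argument lives.

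The conditions for a line to cross $\tilde e'_1,\ldots,\tilde e'_n$ in order are that every vertex of the developed chain lying above the line in $S$ still lies above the candidate line in $S'$, and likewise for below. These are constraints on \emph{positions} of vertices, which are cumulative sums of edge holonomies along the chain. Condition~3 of lemma~\ref{lem:same_saddles} controls only signs of wedges of \emph{individual} saddle-connection holonomies. Your proposal to ``refine using diagonal saddle connections'' and then ``work triangle by triangle'' does not explain which diagonals, nor why the global separation constraints decompose into local ones. In general they do not: a vertex far along the chain can be the binding constraint, and no single triangle sees it.

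The paper handles this with a specific construction, the \emph{slalom hull}: the union of all lines meeting every $\tilde e_i$. Its boundary consists of two convex chains of saddle connections (the top and bottom chains) plus two diagonal saddle connections. These particular saddle connections are exactly the ones whose directional cyclic order encodes the existence of a transversal line; condition~3 preserves that cyclic order, so the analogous hull in $S'$ is nonempty. Even then there is a residual case---vertices of $\tilde e'_i$ that in $S$ lay beyond a \emph{ray} of $\partial\textit{SH}$ rather than behind an edge---which the paper treats with a separate shortest-path argument (a chain of saddle connections turning only rightward). Your sketch does not anticipate this case.

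A smaller point: in your infinite-trajectory argument, nested nonempty \emph{open} sets in a compact space can have empty intersection (e.g.\ $(0,1/n)$ in $[0,1]$). You need the closures $\overline{U_n}$, and then must check that a limit point still yields a genuine trajectory rather than one that hits a singularity.
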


\begin{proof}
We will prove the statement for finite trajectories. 
The statement for infinite trajectories follows
by looking at longer and longer finite subtrajectories.

Let $\gamma$ be a finite trajectory in $S$. To derive a contradiction, it would have to be that
$\gamma$ crosses at least two saddle connections in ${\mathcal T}$. Develop $\gamma$ into the plane,
along with the saddle connections it crosses. The sequence of saddle connections in the orbit-type
develop to a sequence of segments $s_i$ in the plane. We define $\Lambda$ to be the space
of all lines in $\R^2$ that cross through the interiors of each segment $s_i$. The {\em slalom hull} of the sequence 
$\langle s_i \rangle$ is 
$$\textit{SH}= \bigcup_{\ell \in \Lambda} \ell.$$
Clearly this slalom hull is non-empty, because the line containing the developed image of $\gamma$
lies in $\Lambda$. The boundary of the $\textit{SH}$ consists of four rays and a finite set of segments that
pull back to connections.
Orient $\textit{SH}$ so that it is nearly horizontal as in the figure \ref{fig:slalomhull}. 
Note that $\R^2 \smallsetminus \textit{SH}$ consists of two convex components.
We call the finite segments in the top component of $\del \textit{SH}$ the {\em top chain}.
Similarly, the segments in the bottom component will be called the {bottom chain}.
Further, note there are {\em diagonals}, which are parallel to the
infinite rays in $\del \textit{SH}$. 
They are formed by connecting the left-most
vertex of the top chain to the right-most vertex of the bottom chain, and vice versa.
We call the union of the top chain, the bottom chain, and the diagonals the {\em saddle chain},
and note that they form a loop.

\begin{figure}[h]
\begin{center}
\includegraphics[width=4.5in]{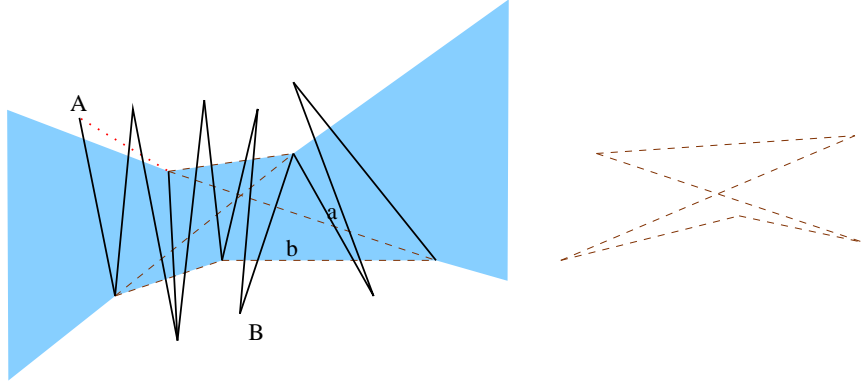}
\caption{On the left, The slalom hull of a sequence of solid segments is rendered as the shaded polygon. The slalom
chain is the sequence of dotted lines. The right shows a deformation of the slalom chain. No 
deformation of the slalom chain which preserves the cyclic ordering of edges in $\RP^2$ can destroy the slalom hull.}
\label{fig:slalomhull}
\end{center}
\end{figure}

Each $c_i$ in the saddle chain of $S$ pulls back to a saddle connection $\sigma_i$ of $S$. Let
$\sigma'_i$ be the corresponding saddle connections in $S'$. By item 3 of lemma \ref{lem:same_saddles},
the directions of saddle connections $\sigma'_i$ must be in the same cyclic order as the directions
for $\sigma_i$. We may develop the saddle connections
$\sigma'_i$ into the plane to obtain the chain of segments $c_i'$. This chain of segments
has essentially the same combinatorics. Consider the sequence of segments given by the diagonal
running from top left to bottom right, and then moving across the top chain from left to right,
and ending with the other diagonal.
Each segment must be rotated slightly counterclockwise to reach the subsequent one. Thus, the region bounded
by the top chain, by the ray leaving the top left vertex in the direction of the diagonal, and the ray leaving
the top right vertex in the direction of the other diagonal must bound a convex set.
Similarly, there is another natural convex set bounded by the lower chain and some rays. The convex sets
may not intersect, because they are guaranteed to lie in two opposite quadrants of the division of the plane by lines
through the diagonals. Thus, we have an analogous set of lines $\Lambda'$ which pass through the diagonals
and not the lower and upper chains. The slalom hull $\textit{SH}'$ given by the same formula is non-empty.

We claim that $\textit{SH}'$ is the slalom hull for the segments $\{s_i'\}$. 
This is equivalent to saying that no endpoints of $s_i'$ lie within $\textit{SH}'$, which will imply our theorem.
Some of the locations of endpoints are determined, because they are endpoints of segments
in the chain $\{c_i'\}$. 

Suppose $B \in \R^2 \smallsetminus \textit{SH}$ is a endpoint of a segment $s_i$ 
in the development of $S$ which crosses one of the edges
in the slalom hull, $b \in \del \textit{SH}$. See figure \ref{fig:slalomhull}. Let $B'$ be the corresponding 
endpoint of the segment $s_i'$ in the development from $S'$. Then $B'$ cannot be in  $\textit{SH}'$,
lest it destroy the segment $c_i'$, which must pull back to a saddle connection in $S'$. 

The more
difficult case is when $A \in \R^2 \smallsetminus \textit{SH}$ is an endpoint of a segment $s_i$ in the
development $S$ which crosses one of the rays in $\del \textit{SH}$. 
Without loss of generality, assume the ray crossed is the upper left ray in the development.
Let $a$ be the diagonal element of the slalom chain that is parallel to this ray, and $a'$ the corresponding
element in the development of $S'$. 
We consider the canonical homotopy class of $S$, with the cone singularities removed, of paths joining
the pull back of the top left vertex in $\del \textit{SH}$ to the pull back $A$. 
This homotopy class should contain the path which develops to follow the
ray of $\del \textit{SH}$ leaving the top left vertex until it hits the segment 
$s_i$ and then follows $s_i$ to $A$.
Let $d$ be the infimum of the lengths of paths in this homotopy class. There is a limiting path $p \subset S$ 
that may pass
through singularities, that achieves this infimum. This path consists of a sequence of saddle connections,
and turns only rightward in total angle less than $\pi$. The initial segment of the developed image of $p$
must immediately leave $\textit{SH}$, hence its orientation when compared to $a$ is determined.
Let $p'$ be the corresponding chain of saddle connections in $S'$, which must likewise turn rightward
by total angle less than $\pi$. The initial segment of the developed image of $p'$ must immediately
leave $\textit{SH}'$, because its orientation when compared to $a'$ must match that of the case in $S$.
Moreover, since $p'$ only bends rightward, the path cannot return to $\textit{SH}'$.
This concludes the argument that $\textit{SH}'$ is the slalom hull for the segments $\{s_i'\}$.
We know this contains lines, any of which contain a segment which pulls back to a trajectory
$\gamma'$ in $S'$ with the same orbit type as $\gamma$ in $S$. 
\end{proof}

\subsection{No other affine automorphisms}

The last step to the proof of theorems \ref{thm:veech_groups} and \ref{thm:affine_automorphisms}
is to demonstrate that all affine automorphisms of the surface lie in the group generated by the elements we listed.

\begin{lemma}
All affine automorphisms of the surface are contained in the group 
generated by $\widehat{-I}_c$, $\hat{A}_c$, $\hat{D}_c$, and $\hat{E}_c$.
\end{lemma}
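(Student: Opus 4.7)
The plan is to reduce an arbitrary $\hat{G} \in \Aut(S_c)$ (with $c \geq 1$) to the identity by successively composing with elements of $\hat{\G}_c = \langle \widehat{-I}_c, \hat{A}_c, \hat{D}_c, \hat{E}_c \rangle$. Let $G = {\bf D}(\hat{G})$ denote the derivative. By theorem \ref{thm:classification_of_saddle_connections}, $G$ permutes the set of saddle connection directions $\G^\pm_c \cdot \{\theta_0, \theta_1\}$, where $\theta_0$ and $\theta_1$ denote the horizontal and slope-one directions. My first step is to show that $G$ preserves each of the two $\G^\pm_c$-orbits individually. Every direction in either orbit carries a cylinder decomposition of $S_c$ of constant modulus, and the multiset of cylinder widths is an invariant of the direction up to a single global rescaling. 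For the horizontal orbit the two smallest widths are $2$ and $2(1+2c)$, giving ratio $1+2c$; for the slope-one orbit they are $2\sqrt{2}(c+1)$ and $4\sqrt{2}c(c+1)$, giving ratio $2c$. The map $\hat{G}$ rescales all widths in a single direction by the same factor, so this ratio is preserved, and since $1+2c \neq 2c$ the two orbits cannot be exchanged.

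Now choose $H \in \G^\pm_c$ with $H(\theta_0) = G(\theta_0)$ and replace $\hat{G}$ by $\hat{H}^{-1} \hat{G}$, so that the new derivative fixes $\theta_0$ and is upper triangular: $G = \left(\begin{smallmatrix} a & b \\ 0 & d \end{smallmatrix}\right)$. Matching the strictly-increasing horizontal width sequence $\{2x_{n-1}+2x_n\}_{n\geq 1}$ with its image forces $|a|=1$ (the smallest elements must coincide), and matching horizontal cylinder areas forces $|\det G|=1$, so $|d|=1$ as well. Composing with $\widehat{-I}_c$ or $\hat{A}_c$ as needed, we may assume $a=d=1$ and reduce to $G = \left(\begin{smallmatrix} 1 & b \\ 0 & 1 \end{smallmatrix}\right)$. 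This parabolic realizes $b/2$ Dehn twists in each horizontal cylinder (all of modulus $\frac{1}{2}$); since $\hat{G}$ is a genuine self-homeomorphism, $b/2$ must be an integer, so $G = D_c^{b/2}$. Composing with $\hat{D}_c^{-b/2}$ finally gives $G = I$.

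It remains to show that $\ker{\bf D}$ is trivial. A translation automorphism $\hat{G}$ preserves each horizontal cylinder individually (the widths are all distinct) and acts as a horizontal translation inside it. On any cylinder $n \geq 2$, the top boundary is the union of two saddle connections of lengths $2x_n$ in $Q_c^+$ and $2x_{n-1}$ in $Q_c^-$, distinct because $0 < x_{n-1} < x_n$; the translation must send each to itself, and since the singular endpoints are fixed the restriction of $\hat{G}$ to this cylinder must be the identity. By connectivity of $S_c$, propagating through shared boundary saddle connections, $\hat{G}$ is then the identity on all of $S_c$.

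The step I expect to be hardest is the orbit-separation argument. The numerical ratio of widths distinguishes the two orbits, but one must verify that the cylinder-width sequence in every direction of a given $\G^\pm_c$-orbit is a uniform rescaling of the horizontal or slope-one sequence. This is a consequence of the fact that each $\hat{H} \in \hat{\G}_c$ carries the horizontal (respectively slope-one) cylinder decomposition of $S_c$ onto the cylinder decomposition in direction $H(\theta_0)$ (respectively $H(\theta_1)$) by a single global scaling, so that the first-two-width ratio is a well-defined invariant of the $\G^\pm_c$-orbit of a direction. Once this is in hand, the remaining reductions are bookkeeping with the modulus and area computations already worked out in the proof of lemma \ref{lem:generators}.
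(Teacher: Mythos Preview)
Your proof is correct and follows the same overall strategy as the paper: use the saddle-connection classification to bring the horizontal direction back to itself, then analyze the resulting upper-triangular derivative via the horizontal cylinder decomposition, and finally kill the kernel. The differences are in the invariants chosen at each step. To separate the two $\G^\pm_c$-orbits of periodic directions you compute the ratio of the two smallest cylinder circumferences ($1+2c$ versus $2c$); the paper instead observes that the smallest horizontal cylinder has two cone points on its boundary while the smallest slope-one cylinder has four, a purely topological distinction. For the kernel you work on cylinders $n\ge 2$ using the two boundary saddle connections of unequal length $2x_n$ and $2x_{n-1}$; the paper uses the smallest cylinder, whose top boundary carries a single cone point. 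Both routes are fine, and yours has the virtue of making the scaling argument for $|a|=|\det G|=1$ explicit rather than reading it off the smallest cylinder in one stroke.

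The one step that is genuinely under-argued is your claim that ``since $\hat G$ is a genuine self-homeomorphism, $b/2$ must be an integer.'' A shear $\left(\begin{smallmatrix}1&b\\0&1\end{smallmatrix}\right)$ is a perfectly good self-homeomorphism of each horizontal cylinder for \emph{any} real $b$; the integrality constraint comes only from the requirement that $\hat G$ permute the cone points. You should say: on the smallest cylinder (circumference $2$, height $1$) there is exactly one cone point on each boundary circle, and since $d=1$ these are individually fixed; taking the bottom one as origin, the top one is carried by the shear to itself plus $b$ modulo $2$, forcing $b\in 2\Z$. This is exactly how the paper closes the argument, phrased as ``$\hat N_c^{-1}\circ\hat M$ must preserve this cylinder and permute the pair of cone singularities in the boundary,'' which immediately yields the two possible forms $D_c^n$ and $-I\,A_c\,D_c^n$. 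With that sentence added, your proof is complete.
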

\begin{proof}
Let us suppose that for some $c \geq 1$ there is an $M \in \GL(2, \R)$ in the Veech group $\Gamma(S_c)$ and
a corresponding element $\hat{M}$ in the affine automorphism group  $\Aut(S_c)$. We will prove that
$\hat{M}$ lies in the group generated by the four elements $\widehat{-I}_c$, $\hat{A}_c$, $\hat{D}_c$, and 
$\hat{E}_c$.

Let $\theta=(1,0) \in S^1$ be the horizontal direction.
We know that the image $M(\theta)$ must contain saddle connections of $S_c$. Further more
the horizontal and slope one directions can be distinguished, since the smallest area maximal cylinder in the horizontal
direction has two cone singularities in its boundary, while the smallest area maximal cylinder in the slope one direction
has four cone singularities in its boundary. Thus, by theorem \ref{thm:classification_of_saddle_connections}, there must be an
element $N_c \in \G^\pm_c$ satisfying $M(\theta) =N_c(\theta)$. It follows that $N_c^{-1} \circ M$ preserves the 
horizontal direction. 

There must be a corresponding element element $\hat{N}_c^{-1} \circ \hat{M} \in \Aut(S_c)$ with derivative
$N_c^{-1} \circ M$. The automorphism must fix the decomposition into horizontal cylinders, and fix each cylinder
in the decomposition (because the cylinders have distinct areas). The smallest area horizontal cylinder is
isometric in each $S_c$. It is built from two triangles, the convex hull of $(0,0)$, $(1,1)$, and $(-1,1)$ and
the same triangle rotated by $\pi$, with diagonal sides of the first glued to the diagonal sides of the second 
by translation. $\hat{N}_c^{-1} \circ \hat{M} \in \Aut(S_c)$ must preserve this cylinder and permute the pair of cone singularities in the boundary. Therefore
$$N_c^{-1} \circ M=\left[\begin{array}{cc} 1 & 2 n \\ 0 & 1 \end{array}\right]=D_c^n
\quad \textrm{or} \quad
N_c^{-1} \circ M=\left[\begin{array}{cc} 1 & -2 n \\ 0 & -1 \end{array}\right]=-I \circ A_c \circ D_c^n$$
for some $n \in \Z$. Therefore, $M=N_c \circ D_c^n$ or $M=N_c \circ -I \circ A_c \circ D_c^n$, all of which
lie in $\G^\pm_c$. Therefore, $\Gamma(S_c) \subset \G^\pm_c$.

Now, we have determined all of the affine automorphism group unless there is something in the kernel of the derivative
map ${\bf D}:\Aut(S_c) \to \Gamma(S_c)$. Suppose there was a non-trivial element $\hat{M}$ in the kernel. 
Then it would have to fix the smallest horizontal cylinder. 
Because there is only one cone point in the top boundary component,
$\hat{M}$ must fix every point in the smallest cylinder. Then, it must fix every point in $S_c$, since $S_c$ is path
connected.
\end{proof}

%NEXT SECTION
\section{Recurrence of the geodesic flow on $S_1$}
\label{sect:recurrence}
In this section, we will demonstrate the following.
\begin{theorem}[Recurrence]
\label{thm:recurrence}
The geodesic flow in irrational directions on the surface $S_1$ is recurrent.
\end{theorem}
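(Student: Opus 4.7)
The plan is to exploit the horizontal cylinder decomposition of $S_1$ together with the involution $\widehat{-I}_1 \in \Aut(S_1)$ to recast the flow $\phi^\theta_t$ as a skew product over an ergodic base with a mean-zero $\Z$-valued cocycle. The surface $S_1$ decomposes into horizontal cylinders $\{C_n\}_{n\geq 1}$ of modulus $\tfrac{1}{2}$, where $C_n$ consists of a trapezoidal strip of $Q_1^+$ at heights $y\in((n-1)^2, n^2)$ glued through the polygon identifications to a congruent strip of $Q_1^-$ at heights $y\in(-n^2,-(n-1)^2)$. Crucially, the top boundary of $C_n$ splits into two horizontal saddle connections, of lengths $2n$ and $2(n-1)$: the first lies in $Q_1^+$ and is shared with the bottom of $C_{n+1}$, while the second lies in $Q_1^-$ and is shared with $C_{n-1}$. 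Consequently a trajectory with $\sin\theta>0$ changes its cylinder index by $+1$ when exiting through a top saddle inside $Q_1^+$, by $-1$ when exiting through a top saddle inside $Q_1^-$, and by $0$ when crossing a polygon side identification (which simply swaps $C_n\cap Q_1^+$ with $C_n\cap Q_1^-$). Fixing a cross-section $\Sigma$ through the smallest cylinder $C_1$ and tracking the cylinder index between consecutive returns gives a bounded cocycle $\phi \in \{-1,0,+1\}$ over the first return map.

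The next step is to verify $\int\phi=0$. This I would extract from $\widehat{-I}_1$: the involution swaps $C_n\cap Q_1^+$ with $C_n\cap Q_1^-$ and reverses the orientation of $\phi^\theta_t$, so every $+1$ contribution to $\phi$ is paired with a measure-equal $-1$ contribution. Once the cocycle is mean-zero, Atkinson's recurrence theorem for bounded integer-valued cocycles over an ergodic base yields recurrence of the skew product, and hence of $\phi^\theta_t$. The remaining input is ergodicity (in fact unique ergodicity) of the base interval exchange in an irrational direction. Here the lattice property is decisive: since $\Gamma(S_1)$ is a lattice in $\SL^\pm(2,\R)$ whose parabolic fixed points exactly parametrize the rational directions of $S_1$ by Proposition~\ref{prop:S_1_saddles}, an irrational $\theta$ corresponds to a geodesic in the finite-area quotient $\Gamma(S_1)\backslash\H^2$ that stays in a compact part, so an adaptation of Masur's recurrence criterion delivers unique ergodicity of the base. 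The main obstacle I expect is precisely this adaptation: Masur's criterion is typically formulated for the finite-area moduli space of abelian differentials, and transferring it to the Veech-group quotient attached to our infinite-area surface requires some care. The mean-zero property itself, by contrast, is forced by the $\widehat{-I}_1$ symmetry, and without it one would expect the drift and dissipation that already occur in the vertical direction, where the strip decomposition mentioned in Section~\ref{sect:automorphisms} explicitly witnesses non-recurrence.
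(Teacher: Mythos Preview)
Your skew-product plan has a structural gap: for Atkinson's theorem you need a \emph{finite-measure} base system over which the cylinder index is a cocycle, and $S_1$ does not furnish one. The horizontal cylinders $C_n$ have circumference $4n-2$ and height $2n-1$; no two are isometric, so there is no deck transformation shifting $C_n$ to $C_{n+1}$, and $S_1$ is not a $\Z$-cover of a finite translation surface along this decomposition (indeed the index set is $\N$, not $\Z$, with $C_1$ as a floor). Your sentence ``fixing a cross-section $\Sigma$ through the smallest cylinder $C_1$ and tracking the cylinder index between consecutive returns'' is circular: the first-return map to $\Sigma\subset C_1$ is exactly what recurrence asserts, so it cannot be taken as the base dynamics. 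If instead you take the base to be the union of all horizontal saddle connections with the one-cylinder crossing map, that base already has infinite measure, and Atkinson's theorem does not apply. The $\widehat{-I}_1$-symmetry and the $\pm 1$ transition rule you identified are correct and pleasant, but they do not by themselves produce a mean-zero cocycle over an \emph{ergodic probability} system. Your invocation of Masur's criterion inherits the same problem: there is no finite-area base surface on which to run it, and the quotient $\Gamma(S_1)\backslash\H^2$ parametrizes affine deformations of the infinite surface $S_1$, to which Masur's compactness argument does not transfer without an entirely new proof.

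The paper's argument avoids these issues completely. It proves a general nested-exhaustion criterion (Lemma~\ref{lem:nested}): if $S=\bigcup D_n$ with $D_n$ compact, nested, and $\liminf \mu_\theta(\partial D_n)=0$, then the flow in direction $\theta$ is recurrent. The subsurfaces $D_n$ are taken to be unions of the first $n$ cylinders in a direction $\tfrac{p_n}{q_n}$ chosen from a ``${\bf G}$-sequence'' of rationals converging to $\theta$ through a subtree of the Farey graph (Propositions~\ref{prop:boundaries} and~\ref{prop:containment}). One then reduces to showing $\liminf i\,q_i\,|\theta-\tfrac{p_i}{q_i}|=0$, which is handled by comparing the ${\bf G}$-sequence to the ordinary continued-fraction convergents of $\theta$ and using the classical estimate $|\theta-\tfrac{s_i}{t_i}|<t_i^{-2}$ together with Lemma~\ref{lem:g_sequence}. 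No ergodicity of any base system and no cocycle machinery is used.
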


A nice geometric consequence is that there are no strips (subsets isometric to $\R_{>0} \times (0,a)$ for $a>0$) in irrational directions.
Earlier work also tells us that there are no closed trajectories and no saddle connections in irrational directions.

The following corollary summarizes the situation in all directions.
The notation $\frac{p}{q} \equiv \frac{p'}{q'} \pmod{2}$ means that when both fractions are reduced,
$p \equiv p' \pmod{2}$ and $q \equiv q' \pmod{2}$.

\begin{corollary}[A trichotomy]
\label{cor:trichotomy}
Let $m \in \R \cup \{\infty\}$ be a slope. Then the geodesic flow in direction $m$ on $S_1$ satisfies
\begin{itemize}
\item If $m=\frac{p}{q}$ is rational and $\frac{p}{q} \equiv \frac{0}{1} \pmod{2}$ or $\frac{p}{q} \equiv \frac{1}{1}\pmod{2}$ then $m$ is a completely periodic direction. That is, there is a decomposition into cylinders in that direction and every trajectory is closed or a saddle connection. 
\item If $m=\frac{p}{q}$ is rational and $\frac{p}{q} \equiv \frac{1}{0} \pmod{2}$, then there is a decomposition
of the surface into infinite strips isometric to $I \times \R$ with $I \subset \R$ an open interval. 
Every trajectory is singular in at most one direction (forward or backward).
Any trajectory which is not singular in forward (resp. backward) time eventually leaves every compact set and never returns.
\item If $m$ is irrational, then the geodesic flow in this direction is recurrent. However, there are no closed trajectories or saddle connections
in this direction.
\end{itemize}
\end{corollary}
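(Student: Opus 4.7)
The plan is to handle the three cases by reducing each to a single representative direction and then propagating by the affine automorphism group. Proposition \ref{prop:S_1_saddles} already shows that $\Gamma(S_1)$, the congruence-two subgroup of $\SL^\pm(2,\Z)$, has exactly three orbits on the rational slopes, with representatives $\tfrac{0}{1}$, $\tfrac{1}{1}$, and $\tfrac{1}{0}$ modulo $2$. Each case of the corollary corresponds to one of these orbits or to its complement in the circle of directions.

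For the first case I would invoke Lemma \ref{lem:generators}: the surface $S_1$ has a cylinder decomposition in the horizontal direction (each cylinder of modulus $\tfrac{1}{2}$) and in the slope-one direction (each cylinder of modulus $\tfrac{1}{4}$ when $c=1$). If $M \in \G^\pm_1$ sends such a representative direction $\theta_0$ to $\theta$, then the corresponding affine automorphism $\hat{M}$ carries the cylinder decomposition in direction $\theta_0$ onto a cylinder decomposition in direction $\theta$. Complete periodicity in every direction with $\tfrac{p}{q} \equiv \tfrac{0}{1}$ or $\tfrac{1}{1} \pmod 2$ follows.

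For the second case it suffices to analyze the vertical direction on $S_1$ and then transport by $\Aut(S_1)$. In $S_1$ the vertical line $\{x = n\}$ passes through the vertex $(n, n^2)$ of $Q_1^+$ and the matching vertex of $Q_1^-$. The open vertical strip $\{n < x < n+1\}$ inside $Q_1^+$ extends upward without bound from the parabolic edge, and gluing with the corresponding open strip inside $Q_1^-$ via the translational identifications produces a subsurface of $S_1$ isometric to $(n, n+1) \times \R$. These strips exhaust $S_1$ up to the vertical lines through integer $x$-coordinates, which are the only trajectories that hit a cone singularity. Since the vertical coordinate grows linearly along any trajectory inside a strip, every non-singular vertical trajectory eventually leaves any compact set and never returns; pushing forward by $\G^\pm_1$ gives the same picture in every direction with $\tfrac{p}{q} \equiv \tfrac{1}{0} \pmod 2$.

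For the third case, recurrence is precisely Theorem \ref{thm:recurrence}. The absence of saddle connections is immediate from Proposition \ref{prop:S_1_saddles}, which forces their holonomies into $\Z^2$ at admissible rational slopes. For the absence of closed geodesics, I would observe that any closed trajectory lies in the interior of a maximal cylinder whose core curve develops into the plane as a segment joining two points identified by a translation in the holonomy group; since $S_1$ is built from integer-vertex polygons glued by integer translations, this translation lies in $\Z^2$, so the direction of the closed trajectory is rational, contradicting the standing hypothesis.

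The main obstacle will be the second case: one must rigorously describe the strip decomposition in the vertical direction and verify that the gluings of $Q_1^+$ to $Q_1^-$ really do assemble the two half-strips into a single isometric copy of $I \times \R$, and that no trajectory inside such a strip is recurrent. The first and third cases reduce cleanly to earlier results once the three-orbit structure under $\Gamma(S_1)$ is exploited.
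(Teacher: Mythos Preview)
Your proposal is correct and follows the same strategy as the paper, which gives only a one-sentence sketch: reduce rational directions to the horizontal, slope-one, and vertical representatives via the three $\Gamma(S_1)$-orbits on $\Q\P^1$, and invoke Theorem~\ref{thm:recurrence} for irrational directions. You have simply filled in more detail than the paper does, including the explicit strip decomposition in the vertical direction and the holonomy argument ruling out closed trajectories of irrational slope; none of this goes beyond what the paper intends.
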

The case for rational directions follows from earlier work. Every rational direction is the image under $\Gamma(S_1)$,
the congruence two subgroup of $\SL(2,\Z)$, of the horizontal, vertical, or slope one directions. 

Now we will discuss the proof of theorem \ref{thm:recurrence}. Given an irrational direction $\theta$ on the surface, 
we get a foliation of the surface by leaves which travel in the direction $\theta$. There is a natural transverse
measure $\mu_\theta$ on the leaves, which measures the Euclidean width of leaves a transversal crosses. 
For each irrational $\theta$ we will construct a nested sequence of subsurfaces 
$D_1 \subset D_2 \subset D_3 \subset \ldots$ satisfying the following lemma.

\begin{lemma}[Nested exhaustion]
\label{lem:nested}
Suppose $S$ is a translation surface and $\theta$ is a direction.
Suppose also there exists a nested sequence of finite area compact
subsurfaces $D_1 \subset D_2 \subset D_3 \subset \ldots$ 
satisfying the following two conditions.
\begin{enumerate}
\item $S=\bigcup_{n \in \Z} D_n$.
\item $\liminf_{n \to \infty} \mu_\theta( \del D_n) = 0$.
\end{enumerate}
Then the geodesic flow in direction $\theta$ is recurrent. 
\end{lemma}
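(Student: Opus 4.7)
The plan is to argue by contradiction using a flux estimate for the flow $\phi_t$ in direction $\theta$. Suppose the flow on $S$ is not recurrent. A standard argument (take a countable basis of open sets, intersect with each $D_N$, and pass to a piece of positive measure) produces an open set $U \subset S$, a measurable subset $A \subset U \cap D_N$ of positive $\mu$-measure for some $N$, and a time $T_0 > 0$ such that $\phi_t(x) \notin U$ for every $x \in A$ and every $t \geq T_0$. In particular the iterates $\phi_{kT_0}(A)$, $k = 0, 1, 2, \ldots$, are pairwise essentially disjoint, each of $\mu$-measure $\mu(A)$.

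The main analytic tool is the flux bound. Because Lebesgue measure on a translation surface decomposes locally as $d\mu_\theta \wedge ds$ near any transversal to the flow (where $s$ is arclength in direction $\theta$), the total $\mu$-mass that can leave $D_n$ in a time window of length $t$ is at most $t\,\mu_\theta(\partial D_n)$. This yields
\[
\mu\bigl(\{x \in D_n : \phi_t(x) \notin D_n\}\bigr) \leq t\,\mu_\theta(\partial D_n), \qquad n \geq N,
\]
and applied to the translates $\phi_{kT_0}(A)$, whose starting set lies in $D_N \subset D_n$, gives $\mu\bigl(\phi_{kT_0}(A) \cap D_n\bigr) \geq \mu(A) - kT_0\,\mu_\theta(\partial D_n)$.

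Summing these estimates for $k = 0, 1, \ldots, K_n$ with $K_n = \lfloor \mu(A) / (2T_0\,\mu_\theta(\partial D_n)) \rfloor$ and using that the $\phi_{kT_0}(A) \cap D_n$ are pairwise disjoint subsets of $D_n$, we get
\[
\mu(D_n) \geq \sum_{k=0}^{K_n} \mu\bigl(\phi_{kT_0}(A) \cap D_n\bigr) \geq \frac{\mu(A)^2}{4\,T_0\,\mu_\theta(\partial D_n)},
\]
i.e.\ $\mu(D_n)\,\mu_\theta(\partial D_n) \gtrsim \mu(A)^2/T_0$. Passing to the subsequence along which $\mu_\theta(\partial D_n) \to 0$ and combining with the way the exhaustion $\{D_n\}$ is constructed in the application (so that the product $\mu(D_n)\,\mu_\theta(\partial D_n)$ also shrinks along that subsequence) contradicts this lower bound, forcing $\mu(A) = 0$.

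The main obstacle, and the place where the choice of $D_n$ matters, is precisely the last step: the disjoint-iterate argument produces a bound on the \emph{product} $\mu(D_n)\,\mu_\theta(\partial D_n)$, not on $\mu_\theta(\partial D_n)$ alone. Closing the argument therefore requires either showing that the specific exhaustion of $S_1$ used when invoking the lemma satisfies $\liminf_n \mu(D_n)\,\mu_\theta(\partial D_n) = 0$, or adapting $A$ to each $n$ so that $T_0$ can be taken small enough relative to $\mu(D_n)\,\mu_\theta(\partial D_n)$. This is the key point that the proof must address; everything else is a direct manipulation of the flux inequality and measure-preservation of $\phi_t$.
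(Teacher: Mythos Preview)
Your argument has a genuine gap, which you yourself flag at the end: the flux-and-disjoint-iterates estimate only yields a lower bound on the product $\mu(D_n)\,\mu_\theta(\partial D_n)$, while the hypothesis of the lemma controls only $\mu_\theta(\partial D_n)$. Since $\mu(D_n)\to\infty$ (the $D_n$ exhaust an infinite-area surface), there is no reason this product should tend to zero, and in the application to $S_1$ it does not: the subsurfaces ${\mathcal D}^i_{q_i,p_i}$ have areas growing at least polynomially in $i$, so you would need a correspondingly strong quantitative decay of $\mu_\theta(\partial D_n)$ that the lemma does not assume and the paper does not establish. Neither of the fixes you suggest is available without additional input: $A$ and $T_0$ are fixed once and for all by the non-recurrence assumption, and strengthening the hypothesis on the exhaustion changes the lemma.

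The paper's proof sidesteps this entirely by working with transverse measure rather than area. Fix a finite transversal $I\subset D_N$. For each $n\ge N$ define the first-hit map $\phi_n$ from $I$ to $I\cup\partial D_n$ by flowing forward until one of these is reached; finiteness of $\textit{Area}(D_n)$ forces $\phi_n$ to be defined $\mu_\theta$-a.e.\ on $I$, and it preserves $\mu_\theta$. Hence $\mu_\theta\big(\phi_n^{-1}(I)\big)\ge\mu_\theta(I)-\mu_\theta(\partial D_n)$. The maps $\phi_n$ are compatible (if $\phi_n(x)\in I$ then $\phi_m(x)=\phi_n(x)$ for $m>n$), so passing to a subsequence with $\mu_\theta(\partial D_n)\to 0$ produces a first return map to $I$ defined on a set of full $\mu_\theta$-measure. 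The point is that $\textit{Area}(D_n)$ enters only qualitatively, to guarantee that $\phi_n$ exists a.e.; no quantitative balance between area and boundary length is needed.
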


This lemma is not a new idea. It is used in work of Gutkin and Troubetzkoy, where it is used to prove that
the geodesic flow in right triangles is directionally recurrent \cite{GT95}.

\begin{proof}[Proof of the nested exhaustion lemma]
Suppose the conditions of the lemma are satisfied. Fix a finite transversal $I$ to the foliation in the direction
$\theta$, we will show that there is  a first return map under the flow in direction $\theta$ which is well defined almost everywhere. 
We can find an $N$ so that $I \subset D_N$. Given any $n \geq N$ we define a transverse measure
preserving map on a full measure subset $J \subset I$ of the form $\phi_n:J \to I \cup \del D_N$. Given a point 
$x \in I$, simply flow in direction $\theta$ until you hit either $I$ or $\del D_n$, the image $\phi_n(x)$ is that
point that you first hit. This is not defined for every point, because the trajectory of a point might never hit 
$I$ or $\del D_n$. But, because $D_n$ is finite area, it is well defined on a subset of full measure. Using the fact
that $\phi_n$ is measure preserving we see
$$\mu_\theta \big( \phi_n^{-1}(I) \big)>\mu_\theta(I) - \mu_\theta(\del D_n).$$
Note, by condition $2$ of the lemma, there is a subsequence of times $n$ so that the right side converging to $\mu_\theta(I)$ as $n \to \infty$. Furthermore,
if $m>n$ and $x$ is a point so that $\phi_n(x) \in I$ then $\phi_m(x)=\phi_n(x)$. Thus, it makes sense
to define a limiting function 
$\phi_\infty:I \to I$ where $\phi_\infty(x)=y$ if there is an $n$ with $\phi_n(x)=y$. Because of the above
argument, this function is defined on a subset of full measure and realizes the first return map to $I$.
Thus the flow in direction $\theta$ is recurrent.
\end{proof}

In the next two subsections we will prove that this lemma applies to our situation.

\subsection{Subsurfaces built from cylinders}

The maximal cylinders in a completely periodic direction of $S_1$ 
have a canonical numbering by $\N$ in order of increasing areas. We identify a rational direction using a reduced
fractional slope $\frac{p}{q} \in \RP^1$, representing the direction the vector $(q,p) \in \R^2$ points.
We allow this to include the fraction $\infty=\frac{1}{0}$.
We will use $\Cyl_{q,p}^n$ to denote the $n$-th cylinder in the direction $\frac{q}{p}$.

Let ${\mathcal D}_{q,p}^n$ denote the closure of the union of the cylinders $\Cyl_{q,p}^1, \Cyl_{q,p}^2, \ldots \Cyl_{q,p}^n$. Note that by definition we have ${\mathcal D}_{q,p}^n \subset {\mathcal D}_{q,p}^{n+1}$.
The boundaries of the surfaces ${\mathcal D}_{q,p}^n$ are described by the following proposition.

\begin{proposition}
\label{prop:boundaries}
Let $\frac{p}{q} \not \equiv \frac{1}{0} \pmod{2}$. The boundary of the surface ${\mathcal D}_{q,p}^n$
consists of two saddle connections. When $\frac{p}{q} \equiv \frac{0}{1} \pmod{2}$,
they each have holonomy $(2 n q,2 n p)$. When $\frac{p}{q} \equiv \frac{1}{1} \pmod{2}$,
they each have holonomy $\big((2 n+1) q,(2 n+1) p\big)$.
\end{proposition}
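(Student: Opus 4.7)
The plan is to treat the horizontal direction $\frac{p}{q}=\frac{0}{1}$ and the slope one direction $\frac{p}{q}=\frac{1}{1}$ by direct computation inside the polygon description of $S_1$, and then to push the results to every admissible rational direction by acting with the Veech group $\Gamma(S_1)$.

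In the horizontal direction, the $n$-th cylinder $\Cyl_{1,0}^n$ is the horizontal slab of $Q_1^+$ between heights $(n-1)^2$ and $n^2$ glued to the symmetric slab of $Q_1^-$ along the two polygon edges of slope $\pm(2n-1)$; these slabs stack vertically, so ${\mathcal D}_{1,0}^n$ is the region of $S_1$ with $|y|\le n^2$. Its boundary consists of the horizontal chord from $(-n,n^2)$ to $(n,n^2)$ inside $Q_1^+$ together with its $\pi$-rotate inside $Q_1^-$; each is an interior chord of its polygon, hence not identified with the other, and each is a saddle connection of holonomy $(2n,0)=(2nq,2np)$. For the slope one direction I use the explicit description of the slope one cylinders from the proof of Lemma~\ref{lem:generators}: the $n$-th cylinder arises from the trapezoid with vertices $P_n, P_{n+1}, P_{-n}, P_{1-n}$ in $Q_1^+$ glued to its $\pi$-rotate in $Q_1^-$. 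Consecutive trapezoids share the slope one chord $P_n P_{1-n}$ (and its rotate), so the trapezoids telescope and ${\mathcal D}_{1,1}^n$ is bounded by the chord from $P_{n+1}=(n+1,(n+1)^2)$ to $P_{-n}=(-n,n^2)$ inside $Q_1^+$, together with its $\pi$-rotate inside $Q_1^-$. Each is a saddle connection of holonomy $\pm(2n+1)(1,1)=((2n+1)q,(2n+1)p)$.

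The general rational case follows by Veech-group transport. Proposition~\ref{prop:S_1_saddles}, combined with the description of $\Gamma(S_1)$ as the congruence two subgroup of $\SL^\pm(2,\Z)$, states that $\Gamma(S_1)$ acts on primitive integer vectors with exactly three orbits, represented by $(1,0)$, $(1,1)$, and $(0,1)$, corresponding to the three possible parity classes of a primitive pair. For a primitive $(q,p)$ with $\frac{p}{q}\not\equiv\frac{1}{0}\pmod{2}$ there is thus $M\in\Gamma(S_1)$ with $M(1,0)^T=(q,p)^T$ or $M(1,1)^T=(q,p)^T$, according to the parity class of $(q,p)$. Since $M$ is area preserving, the associated affine automorphism carries the horizontal (resp.\ slope one) cylinder decomposition to the cylinder decomposition in direction $(q,p)$, preserving the ordering by area, and hence maps ${\mathcal D}_{1,0}^n$ or ${\mathcal D}_{1,1}^n$ to ${\mathcal D}_{q,p}^n$; applying $M$ to the boundary holonomies computed above produces $(2nq,2np)$ or $((2n+1)q,(2n+1)p)$ as required.

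The main subtlety is in the slope one base case: one must verify that the innermost slope one edges of the first trapezoid, namely $P_0 P_1$ inside $Q_1^+$ and its $\pi$-rotate $(0,0)\to(-1,-1)$ inside $Q_1^-$, are actually polygon boundary edges which the global identification of $S_1$ glues to each other, so that they lie in the interior of ${\mathcal D}_{1,1}^n$ rather than on its boundary. Once this is confirmed, the horizontal case is completely analogous (and easier, because no such gluing occurs) and the Veech-group reduction is routine parity bookkeeping.
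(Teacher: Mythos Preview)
Your proof is correct and follows exactly the approach the paper takes: reduce to the horizontal and slope one base cases via the affine automorphism group, then verify those two cases directly. The paper's own proof is a two-line argument that invokes $\Aut(S_1)$-invariance and then explicitly leaves the horizontal and slope one computations to the reader; you have supplied precisely those computations, including the correct holonomies $(2n,0)$ and $(2n+1,2n+1)$ and the important check that the innermost slope one edge $\overline{P_0P_1}$ is a polygon boundary edge glued to its rotate, hence interior to ${\mathcal D}_{1,1}^n$.
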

\begin{proof}
The statement is invariant under the affine automorphism group. Thus the statement only needs to be checked in
the horizontal and slope one directions. We leave it to the reader to check these two cases.
\end{proof}

The {\em Farey graph} ${\bf F}$ is a graph whose vertices are the rational points of $\RP^1$ (i.e. the vertices
are points in $\Q \P^1)$. 
Two rational points
$\frac{p}{q}$ and $\frac{r}{s}$ are the vertices of an edge of the Farey graph if $ps-rq= \pm 1$. Geometrically,
${\bf F}$ may be realized as an $\SL^{\pm}(2,\Z)$ invariant triangulation of the hyperbolic plane by ideal triangles.

The following proposition summarizes the containment properties of the subsurfaces of the form ${\mathcal D}_{q,p}^n$.

\begin{proposition}
\label{prop:containment}
Assume the fractions $\frac{p}{q} \equiv \frac{0}{1} \pmod{2}$ and $\frac{r}{s} \equiv \frac{1}{1} \pmod{2}$
are adjacent in the Farey graph. Then for all $n \in N$,
$${\mathcal D}_{q,p}^n \subset {\mathcal D}_{s,r}^n
\quad \textrm{and} \quad
{\mathcal D}_{s,r}^n \subset {\mathcal D}_{q,p}^{n+1}$$
\end{proposition}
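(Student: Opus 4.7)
The plan is to reduce to a standard pair of directions via the Veech group action, and then verify the two containments by direct polygonal comparison inside $Q_1^+$, transferring to $Q_1^-$ by rotational symmetry.

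For the reduction, I would use Theorem~\ref{thm:veech_groups}, by which $\Gamma(S_1)=\G^\pm_1$ is the congruence-$2$ subgroup of $\SL^\pm(2,\Z)$. This group acts transitively on directed edges of the Farey graph whose endpoints have mod-$2$ types $\frac{0}{1}$ and $\frac{1}{1}$ respectively. Since every element of $\Aut(S_1)$ has derivative of determinant $\pm 1$, it preserves area and so permutes the cylinders in a given direction while preserving the ordering by area; consequently it carries ${\mathcal D}_{q,p}^n$ to ${\mathcal D}_{q',p'}^n$, where $(q',p')$ is the image direction. It therefore suffices to prove the proposition in the standard case $(q,p)=(1,0)$ and $(s,r)=(1,1)$.

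Next I would identify the relevant boundary saddle connections using Proposition~\ref{prop:boundaries}. In $Q_1^+$, the boundary of ${\mathcal D}_{1,0}^n$ restricts to the unique horizontal chord of holonomy $(2n,0)$ between vertices of $Q_1^+$, namely the segment $\alpha_n$ from $(-n,n^2)$ to $(n,n^2)$; similarly the boundary of ${\mathcal D}_{1,1}^n$ restricts to the unique slope-$1$ chord of holonomy $(2n+1,2n+1)$, namely $\beta_n$ from $(-n,n^2)$ to $(n+1,(n+1)^2)$. Together with the piecewise linear boundary of $Q_1^+$ through $(0,0)$, these determine the relevant subsurfaces: ${\mathcal D}_{1,0}^n\cap Q_1^+$ is the polygon with upper boundary $\alpha_n$ and lower boundary the parabolic arc from $(-n,n^2)$ through $(0,0)$ to $(n,n^2)$, while ${\mathcal D}_{1,1}^n\cap Q_1^+$ is the polygon with upper boundary $\beta_n$ and lower boundary the parabolic arc from $(-n,n^2)$ through $(0,0)$ to $(n+1,(n+1)^2)$.

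The chain of inclusions then reduces to a direct polygonal comparison. For ${\mathcal D}_{1,0}^n\cap Q_1^+\subset{\mathcal D}_{1,1}^n\cap Q_1^+$, the two polygons share the left vertex $(-n,n^2)$; the line containing $\beta_n$ is $y=x+n+n^2$, which satisfies $y>n^2$ for $x>-n$, so $\alpha_n$ lies weakly below $\beta_n$, and the parabolic arc bounding ${\mathcal D}_{1,1}^n$ extends farther rightward past $(n,n^2)$ to $(n+1,(n+1)^2)$. The inclusion ${\mathcal D}_{1,1}^n\cap Q_1^+\subset{\mathcal D}_{1,0}^{n+1}\cap Q_1^+$ is analogous: the maximum of $x+n+n^2$ on $[-n,n+1]$ is $(n+1)^2$, attained at $x=n+1$, so $\beta_n$ lies weakly below $\alpha_{n+1}$, and the parabolic arc for ${\mathcal D}_{1,0}^{n+1}$ extends farther leftward past $(-n,n^2)$ to $(-(n+1),(n+1)^2)$. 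Applying the involution $\widehat{-I}_1$, which swaps $Q_1^+$ and $Q_1^-$ while preserving both distinguished directions, yields the analogous inclusions in $Q_1^-$. The principal obstacle is justifying the explicit polygonal description of ${\mathcal D}_{1,1}^n\cap Q_1^+$; I would handle this by induction on $n$, using that the $n$-th slope-$1$ cylinder is the flat region bounded in $Q_1^+$ by the consecutive slope-$1$ saddle connections $\beta_{n-1}$ and $\beta_n$, combined with the corresponding strip of $Q_1^-$ between $\beta_{n-1}^-$ and $\beta_n^-$.
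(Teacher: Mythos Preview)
Your proof is correct and follows the same approach as the paper: reduce via the transitive action of the congruence-$2$ subgroup on the relevant Farey edges to the standard pair of horizontal and slope-one directions, then verify the containments there. The paper simply points to figure~\ref{fig:surface_cylinders} for the standard case, whereas you carry out the polygonal comparison explicitly in $Q_1^+$; your computations (the boundary chords $\alpha_n$, $\beta_n$ and the line $y=x+n+n^2$) are all correct.
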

\begin{proof}
By theorem \ref{thm:veech_groups}, the Veech group of $S_1$ is the congruence two subgroup
of $\SL(2,\Z)$. This group acts transitively on edges of the form mentioned in the proposition. The
statement is affine automorphism invariant, so it is sufficient to prove it for the case of $\frac{p}{q}=\frac{0}{1}$
and $\frac{r}{s}=\frac{1}{1}$. This case is illustrated by figure \ref{fig:surface_cylinders}.
\end{proof}

This proposition encapsulates all we know about the containment properties of the subsurfaces 
${\mathcal D}_{q,p}^n$. So, the Farey graph contains many edges that will be of no use to us. We define
the subgraph ${\bf G} \subset {\bf F}$ to be the graph obtained by removing vertices corresponding
to rationals equivalent to $\frac{1}{0}$ modulo $2$. All edges which connect to a vertex equivalent to $\frac{1}{0}$ modulo $2$ must also be removed. The graph ${\bf G}$ is depicted in figure \ref{fig:graph}.

\begin{figure}[h]
\begin{center}
\includegraphics[width=4.5in]{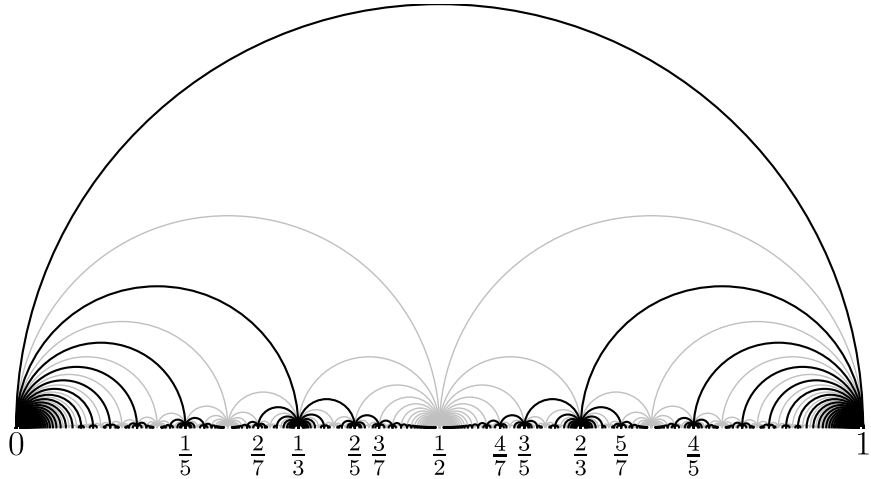}
\caption{The dark lines illustrate the portion of ${\bf G}$ within $[0,1]$. The Farey graph
includes the lighter lines. Both graphs are invariant under integer translations. }
\label{fig:graph}
\end{center}
\end{figure}

The graph ${\bf G}$ is a tree. In fact, it is the tree with countablly many edges meeting at every vertex. 
Thus, paths between two vertices which do not
backtrack over themselves are unique. Furthermore, if $\frac{p_1}{q_1} \in {\bf G}$ is a vertex and 
$\theta \in \R \subset \RP^1$ 
is irrational, there is a unique non-backtracking path so that the vertices crossed by the path 
$\frac{p_1}{q_1}, \frac{p_2}{q_2}, \ldots$ converge to $\theta$. When $\frac{p_1}{q_1}=n$ is the closest integer
to $\theta$, we call this sequence the ${\bf G}$-sequence. 

There is a natural coding of the ${\bf G}$-sequence converging to $\theta$ of the form 
$(\frac{p_1}{q_1}, \frac{p_2}{q_2} ; k_2, k_3, \ldots)$.  $\frac{p_1}{q_1}$ and $\frac{p_2}{q_2}$
are the first two rationals in the ${\bf G}$-sequence. Each $k_i \in \Z \smallsetminus {0}$. 
For $i \geq 2$, assume the $i-1$-st and $i$-th fractions in the ${\bf G}$-sequence are given by
$\frac{p_{i-1}}{q_{i-1}}$ and $\frac{p_i}{q_i}$. Then all the rationals which are adjacent in the Farey graph to 
$\frac{p_i}{q_i}$ are of the
form $\frac{p_{i-1}+j p_i}{q_{i-1}+j q_i}$. Thus, there is a $j$ so that the $i+1$-st rational is given 
by $\frac{p_{i+1}}{q_{i+1}}=\frac{p_{i-1}+j p_i}{q_{i-1}+j q_i}$. In order for this fraction not to be congruent 
to $\frac{1}{0}$ modulo two, the number $j$ must be even. To avoid backtracking, $j \neq 0$. 
So, we set add the number $k_i=\frac{j}{2}$ to our coding. This number satisfies
\begin{equation}
\label{eq:inductive_G_sequence}
\frac{p_{i+1}}{q_{i+1}}=\frac{p_{i-1}+2 k_i p_i}{q_{i-1}+2 k_i q_i},
\end{equation}
where we simplify fractions so that the denominator is always positive.
%Geometrically, $k_i$ positive (resp. negative) tells
%us that $\frac{p_{i+1}}{q_{i+1}}$ is the vertex of ${\bf G}$ adjacent to $\frac{p_i}{q_i}$ which is 
%the $k_i$-th such vertex crossed as we move in $\RP^1$ from $\frac{p_{i-1}}{q_{i-1}}$ toward $\frac{p_i}{q_i}$ in the
%direction not passing through infinite (passing through infinite). 

Now we will describe the nested sequence of surfaces used to apply lemma \ref{lem:nested}. Let 
$\theta \in \R \subset \RP^1$ be any irrational slope and consider the ${\bf G}$-sequence of
$\theta$, $\langle \frac{p_i}{q_i} \rangle$. We consider the sequence of 
surfaces $\langle D_{q_i,p_i}^i \rangle_{i\in \N}$. By proposition \ref{prop:containment} each surface
is contained in the subsequent surface. Using proposition \ref{prop:boundaries}, we compute that
$$\mu_\theta( \del D_{q_i,p_i}^i)=\begin{cases}
\frac{2 i}{\sqrt{1+\theta^2}} \big|(1,\theta) \wedge (q_i,p_i)\big| 
& \textrm{if $\frac{p_i}{q_i} \equiv \frac{0}{1} \pmod{2}$} \\
\frac{2 i+1}{\sqrt{1+\theta^2}} \big|(1,\theta) \wedge (q_i,p_i)\big|
& \textrm{if $\frac{p_i}{q_i} \equiv \frac{1}{1} \pmod{2}$}.
\end{cases}$$
We will find it convenient to notice that
\begin{equation}
\label{eq:first_inequality}
\mu_\theta( \del D_{q_i,p_i}^i) \leq 
\frac{2 i+1}{\sqrt{1+\theta^2}} \big|(1,\theta) \wedge (q_i,p_i)\big|=
\frac{(2 i+1) q_i}{\sqrt{1+\theta^2}} | \theta - \frac{p_i}{q_i}|
\end{equation}
In order to apply lemma \ref{lem:nested}, it is sufficient to show that the $\lim \inf$ of the right hand side as $i \to \infty$ is zero. We see this sufficient condition is fulfilled if 
\begin{equation} 
\label{eq:inequality_to_prove}
\liminf_{i \to \infty} i q_i | \theta - \frac{p_i}{q_i}| =0,
\end{equation}
since there is a constant $\kappa$ depending only on $\theta$ 
so that $\kappa i q_i>\frac{(2 i+1) q_i}{\sqrt{1+\theta^2}}$.
Taking $\kappa=\frac{3}{\sqrt{1+\theta^2}}$ suffices.
We will prove equation \ref{eq:inequality_to_prove} is true in the next subsection. The following lemma is
the main contribution of our knowledge of the ${\bf G}$-sequence.

\begin{lemma}
\label{lem:g_sequence}
Let $\langle \frac{p_i}{q_i} \rangle$ be the ${\bf G}$-sequence for an irrational $\theta$. Then
$$\lim_{i \to \infty} \frac{i}{q_i}=0.$$
\end{lemma}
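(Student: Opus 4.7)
The plan is to extract from the recurrence in equation \eqref{eq:inductive_G_sequence} enough monotonicity to show that $q_i$ grows faster than linearly in $i$.  Writing that recurrence as $q_{i+1} = |q_{i-1} + 2 k_i q_i|$ with $k_i \in \Z \smallsetminus \{0\}$, a short induction using the reverse triangle inequality $|q_{i-1} + 2 k_i q_i| \geq 2 |k_i| q_i - q_{i-1} \geq 2 q_i - q_{i-1}$ shows that $(q_i)$ is non-decreasing.  Setting $d_i := q_i - q_{i-1}$, with the convention $q_0 = 0$ and $d_1 = 1$, the same estimate rewrites as $d_{i+1} \geq d_i$, so $(d_i)$ is a non-decreasing sequence of positive integers.

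The heart of the argument is a short case analysis identifying when $d_i$ is constant versus strictly increasing.  Equality $d_{i+1} = d_i$ occurs precisely when $k_i = -1$, in which case the recurrence collapses to the linear relation $q_{i+1} = 2 q_i - q_{i-1}$, and likewise $p_{i+1} = 2 p_i - p_{i-1}$ (after the sign flip that keeps $q_{i+1}$ positive).  For any other $k_i$, handling the cases $k_i \geq 1$ and $k_i \leq -2$ separately yields the explicit jump estimate
\begin{equation}
\label{eq:jump_plan}
d_{i+1} - d_i \geq 2 q_{i-1}.
\end{equation}

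The argument is then completed by contradiction.  First, if $k_i = -1$ for all sufficiently large $i$, then iterating the linear recurrence puts $(p_i, q_i)$ on an arithmetic progression in $\Z^2$, forcing $p_i/q_i$ to tend to a rational number, contrary to the irrationality of $\theta$; hence $k_i \neq -1$ infinitely often.  Second, if $(d_i)$ were bounded above by some constant $D$, then \eqref{eq:jump_plan} would give $q_{i-1} \leq D/2$ at every $i$ with $k_i \neq -1$, contradicting $q_i \to \infty$.  Thus $d_i \to \infty$, and monotonicity of $(d_i)$ yields
$$q_n \geq \sum_{j=\lceil n/2 \rceil}^{n} d_j \geq \tfrac{n}{2}\, d_{\lceil n/2 \rceil},$$
so $q_n / n \to \infty$, that is, $\lim_{i \to \infty} i/q_i = 0$.

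The only place where any real care is needed is in deriving the equality case together with the jump \eqref{eq:jump_plan}: one must track the sign of $q_{i-1} + 2 k_i q_i$, which is negative precisely when $k_i \leq -1$.  Once the three cases $k_i = -1$, $k_i \leq -2$, and $k_i \geq 1$ are separated, the needed inequalities are routine, and I do not foresee any further obstacle.
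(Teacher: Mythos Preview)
Your argument is correct and follows essentially the same route as the paper: both proofs use the recurrence to show the differences $d_i=q_i-q_{i-1}$ are non-decreasing with equality exactly when $k_i=-1$, rule out the tail of $-1$'s by observing it would force $\theta$ to be rational, and then conclude $d_i\to\infty$ so that $q_i$ grows superlinearly. Your explicit jump bound $d_{i+1}-d_i\ge 2q_{i-1}$ is a nice touch but not needed---since the $d_i$ are integers, strict inequality at infinitely many steps already forces $d_i\to\infty$, which is how the paper argues.
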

\begin{proof}
Consider the recursive definition for the denominator given in equation \ref{eq:inductive_G_sequence}.
$$q_{i+1}=|q_{i-1}+2 k_i q_i|.$$
It can be seen inductively that the sequence of denominators is strictly increasing. Furthermore,
\begin{equation}
\label{eq:difference}
q_{i+1} - q_{i} \geq q_{i}-q_{i-1}.
\end{equation}
Equality only happens when $k_i=-1$. We break into two cases. 

First, suppose that infinitely many $k_i$ are not equal to $-1$. Fix an arbitrary $c>0$. 
Then, by the previous paragraph, there is a $N>0$ such that $q_{N+1}-q_N>c$. Then by equation
\ref{eq:difference} we know $q_{N+j} > q_N+jc$ for all $j \geq 1$. Consequently, 
$$\limsup_{i \to \infty} \frac{i}{q_i}<\limsup_{j \to \infty} \frac{N+j}{q_N+jc}=\frac{1}{c}.$$
Since $c$ was arbitrary the conclusion of the lemma must hold. 

If the first case does not hold, there is an $N\in N$ such that $k_i=-1$ for all $i \geq N$. So for 
$i \geq N$, we obtain the inductive formula
$$q_{i+1}=2 q_i - q_{i-1} \quad \textrm{and} \quad p_{i+1}=2 p_i - p_{i-1}.$$
It follows that for all $j\geq 0$, we know $q_{N+j}=q_N+j(q_N-q_{N-1})$ and $p_{N+j}=p_N+j(p_N-p_{N-1})$.
Then
$$\theta=\lim_{j \to \infty} \frac{p_{N+j}}{q_{N+j}}=
\lim_{j \to \infty} \frac{p_N+j(p_N-p_{N-1})}{q_N+j(q_N-q_{N-1})}=\frac{p_N-p_{N-1}}{q_N-q_{N-1}}.$$
But this contradicts the assumption that $\theta$ is irrational. That is, the coding of the ${\bf G}$-sequence for
an irrational can not end in an infinite sequence of $-1$s. 
\end{proof}

\subsection{Continued fractions}

We will now introduce continued fractions.
By the notation $[a_0; a_1, a_2, \ldots, a_k]$ we mean the continued fraction
$$[a_0; a_1, a_2, \ldots, a_k]=a_0 + \cfrac{1}{a_1+\cfrac{1}{a_2+ \cfrac{}{\ddots a_{k-1} + \cfrac{1}{ 1+\frac{1}{a_k}}}}}.$$
See \cite{Khinchin} and \cite{Rockett-Szusz} for introductions to continued fractions.

Associated to an irrational $\theta$ is a unique sequence of rational approximates $\frac{s_i}{t_i}$, called
{\em the continued fraction approximates}, 
a sequence of integers $[a_0; a_1, a_2, \ldots]$, which is the {\em continued fraction expansion},
and a sequence of irrational {\em remainders} $r_0, r_1, \ldots$.
For purposes of the inductive argument, we define $s_{-1}=1$ and $t_{-1}=0$. 
We define $a_0$ to be the largest integer less than $\theta$, $\frac{s_0}{t_0}=\frac{a_0}{1}$, and $r_0=\theta$.
It will inductively be true that
\begin{equation}
\label{eq:inductive_hypothesis}
0<r_i-a_i<1.
\end{equation}
Given definitions of $a_i$, $\frac{s_i}{t_i}$, and $r_i$, we define $r_{i+1}$ to be the unique number satisfying
$$r_i=a_i+\frac{1}{r_{i+1}}.$$
By our inductive hypothesis, $r_{i+1}>1$. We choose $a_{i+1}$ to be the largest integer less than $r_{i+1}$, so that 
$a_{i+1}$ and $r_{i+1}$ satisfy the inductive hypothesis, equation \ref{eq:inductive_hypothesis}. Then we define
$$\frac{s_{i+1}}{t_{i+1}}=[a_0; a_1, \ldots, a_{i+1}]$$
When expanded, we get the more useful formula
$$\frac{s_{i+1}}{t_{i+1}}=\frac{a_{i+1}s_{i}+s_{i-1}}{a_{i+1}t_{i}+t_{i-1}}.$$

It is well known that the continued fraction sequence for $\theta$ converges to $\theta$. Moreover, we have the estimate
\begin{equation}
\label{eq:cf_estimate}
|\theta-\frac{s_i}{t_i}|<\frac{1}{q_i^2}.
\end{equation}

\begin{proposition}
Infinitely many fractions in the continued fraction sequence for an irrational $\theta$ also lie in the ${\bf G}$-sequence of $\theta$.
\end{proposition}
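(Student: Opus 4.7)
The plan is to prove this proposition in two steps, combining a parity argument with the tree structure of ${\bf G}$. First I would establish that infinitely many CF approximates $\frac{s_n}{t_n}$ satisfy $\frac{s_n}{t_n} \not\equiv \frac{1}{0} \pmod{2}$ and hence lie in ${\bf G}$. Second I would show that every such ``good'' CF approximate, from some index onward, appears as a vertex of the ${\bf G}$-sequence of $\theta$.

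For the first step, the decisive observation is the determinant identity $s_i t_{i+1} - s_{i+1} t_i = \pm 1$ satisfied by consecutive CF approximates. If both $\frac{s_i}{t_i}$ and $\frac{s_{i+1}}{t_{i+1}}$ were $\equiv \frac{1}{0} \pmod{2}$ (odd numerator, even denominator), then $s_i t_{i+1} - s_{i+1} t_i$ would be the difference of two even numbers, contradicting $\pm 1$. Hence two consecutive CF approximates cannot both lie outside ${\bf G}$, so at least every other approximate is in ${\bf G}$ and infinitely many good ones occur.

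For the second step, I would represent the Farey-graph geodesic converging to $\theta$ as a non-backtracking path passing through all CF approximates together with the intermediate semi-convergents $A_j = \frac{j s_i + s_{i-1}}{j t_i + t_{i-1}}$ for $1 \leq j \leq a_{i+1}-1$. A quick mod-$2$ check shows that when the sandwiched CF approximate $\frac{s_i}{t_i}$ has bad parity $(1,0)$, the surrounding semi-convergents all have good parity: indeed $A_j \equiv j \, (1,0) + P_{i-1} \pmod{2}$, which alternates between the two good parity classes as $j$ varies (assuming $P_{i-1}$ is good). Thus the Farey geodesic to $\theta$, with each bad CF approximate replaced by its flanking chain of semi-convergents, becomes a non-backtracking path inside the tree ${\bf G}$ that converges to $\theta$ and passes through every good CF approximate. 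By uniqueness of non-backtracking paths to a boundary point of the tree ${\bf G}$, this constructed path must eventually coincide with the ${\bf G}$-sequence of $\theta$, so all but finitely many good CF approximates lie on the ${\bf G}$-sequence, finishing the proof.

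The main obstacle will be to verify the non-backtracking property at the transitions between ``blocks'' (where a bad CF approximate is detoured around) and to reconcile the possibly different starting vertices $\frac{s_0}{t_0} = \lfloor \theta \rfloor$ and $\frac{p_1}{q_1}$ (the closest integer to $\theta$). Since these two integers differ by at most one Farey step and both are vertices of ${\bf G}$, the two paths toward $\theta$ in the tree ${\bf G}$ must merge after at most one step, so this initial discrepancy affects only finitely many terms and the infiniteness conclusion is unaffected.
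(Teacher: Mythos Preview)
Your approach is correct and will work once the non-backtracking checks at the block transitions are carried out (they reduce to denominator comparisons: the vertex just before a good convergent $P_{i+1}$ on a detour has denominator strictly less than $t_{i+1}$, while the vertex just after has denominator strictly greater). However, it is considerably more elaborate than the paper's argument.

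The paper bypasses the explicit path construction and the parity bookkeeping entirely with a separation argument. For any $i$ with $t_i>1$, the Farey edge $E$ joining $\frac{s_i}{t_i}$ and $\frac{s_{i+1}}{t_{i+1}}$ has the property that the open interval between its endpoints contains $\theta$ but no integers. Since edges of the Farey triangulation meet only at vertices, any path in the Farey graph starting at an integer and limiting on $\theta$ must pass through one of the two endpoints of $E$. The ${\bf G}$-sequence is such a path (it is a path in the subgraph ${\bf G}\subset{\bf F}$, hence in ${\bf F}$), so one of $\frac{s_i}{t_i},\frac{s_{i+1}}{t_{i+1}}$ lies on it. This gives the same ``one of every consecutive pair'' conclusion in a single stroke, without needing to know in advance which convergent is good, without building the detours through semi-convergents, and without invoking the tree-uniqueness of ${\bf G}$-paths. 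Your construction, on the other hand, yields the slightly sharper statement that \emph{every} good convergent (from some index on) appears in the ${\bf G}$-sequence, and makes the mechanism of how the ${\bf G}$-sequence threads through the Farey graph completely explicit.
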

\begin{proof}
%A fraction, $\frac{s}{t}$, is in the ${\bf G}$-graph unless $\frac{s}{t}\cong \frac{1}{0} \pmod{2}$. It may be verified inductively,
%that for the continued fraction sequence $\frac{s_{i+1}}{t_{i+1}}\not \cong \frac{s_i}{t_i} \pmod{2}$, 
%and therefore one of each adjacent pair
%$(\frac{s_i}{t_i},\frac{s_{i+1}}{t_{i+1}})$ must be a vertex of the ${\bf G}$-graph.
A pair of adjacent continued fraction approximates can be verified to lie on opposite sides of the number $\theta$. Also, they are adjacent
in the Farey graph. Choose $i$ large enough so that $t_i>1$. Then, the interval with endpoints
$\frac{s_i}{t_i}$ and $\frac{s_{i+1}}{t_{i+1}}$ contains $\theta$, but no integers. Let $E$ denote the edge of the Farey graph with endpoints $\frac{s_i}{t_i}$ and $\frac{s_{i+1}}{t_{i+1}}$. Any path through the Farey graph starting at an integer, which limits on $\theta$ must pass
through $E$. Therefore, one of the endpoints of $E$ must appear in the ${\bf G}$-sequence. We conclude that at least one of every adjacent pair of continued fraction approximates must be in the ${\bf G}$-sequence.
\end{proof}

\begin{proof}[Proof of theorem \ref{thm:recurrence}]
Following the logic of the previous subsection, to apply lemma \ref{lem:nested}, it is sufficient to prove equation \ref{eq:inequality_to_prove}.
Let $\frac{p_{i(j)}}{p_{i(j)}}$ denote the subsequence of the ${\bf G}$-sequence, $\frac{p_i}{q_i}$, for $\theta$ which are in the continued 
fraction sequence for $\theta$. This is a subsequence because of the previous proposition. Using the estimate \ref{eq:cf_estimate}
we may simplify the left hand side of equation \ref{eq:inequality_to_prove}.
$$\liminf_{i \to \infty} i q_i | \theta - \frac{p_i}{q_i}| \leq \liminf_{j \to \infty} i(j) q_{i(j)} | \theta - \frac{p_{i(j)}}{q_{i(j)}}|
\leq \liminf_{j \to \infty} \frac{i(j)}{q_{i(j)}}=0$$
The last step follows directly from lemma \ref{lem:g_sequence}.
\end{proof}

\section{Dynamics of the affine automorphism group}
\label{sect:G_dynamics}
This section concerns asymptotics of areas of cylinder intersections under the action of the affine automorphism
group. This turns out to be roughly equivalent to studying the action of the affine automorphism on homology.

This section culminates with the proof of equation \ref{eq:asymptotic_formula} of the introduction,
which is restated as theorem \ref{thm:asymptotics_of_cylinder_intersections}.

\subsection{Intersections of cylinders}

Consider two cylinders ${\mathcal A}$ and ${\mathcal B}$ in the surface $S_1$,
and an element $\hat{H}$ in the affine automorphism group $\Aut(S_1)$ acting hyperbolically. 
Algebraically, acting hyperbolically means the derivative $H={\bf D}(\hat{H})$ in the Veech group
has an eigenvalue bigger than one. 
We are interested in studying the asymptotics of the quantity
$$\textit{Area}( \hat{H}^n({\mathcal A}) \cap {\mathcal B})$$
as $n$ tends to infinity. 

Let $\gamma_{\mathcal A}$ and $\gamma_{\mathcal B}$ be core curves of the cylinders ${\mathcal A}$ and ${\mathcal B}$
respectively.  Given a closed curve on a translation surface, we can develop it into the plane. This development 
is canonical up to translations. The translation vector taking the initial point of this development to the 
termination point is the {\em holonomy} around the closed curve. Let
$\hol(\gamma_{\mathcal A}), \hol(\gamma_{\mathcal B}) \in \R^2$ denote the holonomy around the core curves.
The wedge product between two vectors in $\R^2$ is the usual one described by equation \ref{eq:wedge}.
This is also the signed area of the parallelogram formed by the two vectors.
\begin{proposition}
\label{prop:intersection_formula}
Two non-parallel cylinders ${\mathcal A}$ and ${\mathcal B}$ on a translations surface satisfy the equation
$$\textit{Area}({\mathcal A} \cap {\mathcal B})=
\frac{\big|\gamma_{\mathcal A} \cap \gamma_{\mathcal B})\big|\textit{Area}({\mathcal A}) \textit{Area}({\mathcal B})}
{\big|\hol(\gamma_{\mathcal A}) \wedge \hol(\gamma_{\mathcal B})\big|},
$$
where $|\gamma_{\mathcal A} \cap \gamma_{\mathcal B})|$ denotes the absolute value of the algebraic intersection
number.
\end{proposition}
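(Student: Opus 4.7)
The plan is to decompose $\mathcal{A} \cap \mathcal{B}$ into connected components, show each is a parallelogram of a predictable area, count them using the intersection of core curves, and finally reconcile algebraic versus geometric intersection number.

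First I would fix parameterizations. Write $\mathcal{A}$ as the quotient of a Euclidean strip of width $h_{\mathcal{A}}$ (the height of $\mathcal{A}$) by the translation $\hol(\gamma_{\mathcal{A}})$, and similarly for $\mathcal{B}$. Since $\mathcal{A}$ and $\mathcal{B}$ are non-parallel, their directions differ, so their core curves $\gamma_{\mathcal{A}}$ and $\gamma_{\mathcal{B}}$ cross transversely at a finite set of points. A small neighborhood of such a crossing in $\mathcal{A} \cap \mathcal{B}$ is bounded by the two pairs of boundary components of $\mathcal{A}$ and $\mathcal{B}$, and therefore extends (inside the flat structure) to a parallelogram with sides parallel to $\hol(\gamma_{\mathcal{A}})$ and $\hol(\gamma_{\mathcal{B}})$. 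The key geometric claim is that the connected components of $\mathcal{A} \cap \mathcal{B}$ are in bijection with the points of $\gamma_{\mathcal{A}} \cap \gamma_{\mathcal{B}}$, one parallelogram per crossing.

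Next I would compute the area of one such parallelogram. Its sides have lengths equal to $h_{\mathcal{A}}/|\sin\theta|$ and $h_{\mathcal{B}}/|\sin\theta|$ where $\theta$ is the angle between the two directions, but it is cleaner to observe that the parallelogram is the image of a rectangle of sides $h_{\mathcal{A}}$ and $h_{\mathcal{B}}$ under the linear map sending unit horizontals and verticals to unit vectors along $\hol(\gamma_{\mathcal{A}})$ and $\hol(\gamma_{\mathcal{B}})$; its area is
\[
\frac{h_{\mathcal{A}}\, h_{\mathcal{B}}}{|\hat u_{\mathcal{A}} \wedge \hat u_{\mathcal{B}}|}
= \frac{h_{\mathcal{A}}\,|\hol(\gamma_{\mathcal{A}})|\cdot h_{\mathcal{B}}\,|\hol(\gamma_{\mathcal{B}})|}{|\hol(\gamma_{\mathcal{A}}) \wedge \hol(\gamma_{\mathcal{B}})|}
= \frac{\textit{Area}(\mathcal{A})\, \textit{Area}(\mathcal{B})}{|\hol(\gamma_{\mathcal{A}}) \wedge \hol(\gamma_{\mathcal{B}})|},
\]
using $\textit{Area}(\mathcal{A}) = h_{\mathcal{A}}|\hol(\gamma_{\mathcal{A}})|$ and similarly for $\mathcal{B}$. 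Multiplying by the number of crossings yields the claimed formula, except with geometric intersection number in place of algebraic.

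The final step is to identify the geometric and absolute algebraic intersection numbers. This is where the straightness of core curves pays off: at every crossing, the ordered pair of tangent vectors is the same pair $(\hol(\gamma_{\mathcal{A}}),\hol(\gamma_{\mathcal{B}}))$ up to positive scaling, so every crossing contributes with the same sign $\textit{sign}(\hol(\gamma_{\mathcal{A}}) \wedge \hol(\gamma_{\mathcal{B}}))$. Hence $|\gamma_{\mathcal{A}} \cap \gamma_{\mathcal{B}}|$, the absolute algebraic intersection number, agrees with the geometric count, completing the proof.

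The only subtle point, and the one I would be most careful about, is the bijection between components of $\mathcal{A} \cap \mathcal{B}$ and crossings of the core curves. Because $S_1$ has infinite genus and infinite cone angle, one should not argue via a global universal cover but rather locally: given a component $K$ of $\mathcal{A} \cap \mathcal{B}$, intersect $K$ with $\gamma_{\mathcal{A}}$; the fact that $\gamma_{\mathcal{A}}$ goes straight across $\mathcal{B}$ in the transverse direction shows this intersection is a single arc of $\gamma_{\mathcal{A}}$ meeting $\gamma_{\mathcal{B}}$ in exactly one point, giving the desired bijection.
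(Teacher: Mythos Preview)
Your argument is correct and follows essentially the same approach as the paper: decompose $\mathcal{A}\cap\mathcal{B}$ into congruent parallelograms indexed by core-curve crossings, then compute the area of one parallelogram. Your computation via heights and holonomy lengths is a minor repackaging of the paper's vector calculation, and you are in fact more explicit than the paper on two points it leaves implicit (that all crossings carry the same sign, and that the parallelogram--crossing bijection holds without appeal to a global cover).
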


\begin{figure}[htbp]
\begin{center}
\includegraphics[width=3in]{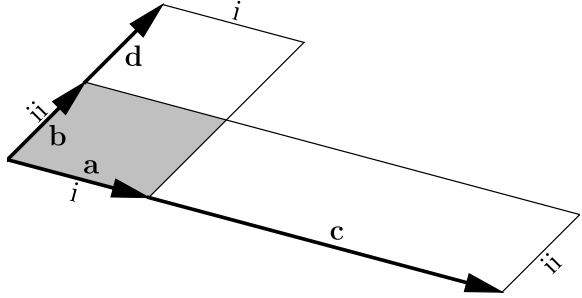}
\caption{Two intersecting cylinders developed into the plane. Roman numerals indicate edge identifications, which reconstruct the cylinders.}
\label{fig:cylinder_intersection}
\end{center}
\end{figure}

\begin{proof}
Cylinders on a translation surface intersect in a union of parallelograms, which are isometric and differ only by parallel translation. The number of these parallelograms is the absolute value of the algebraic intersection number between the core curves. Thus, we need to show that the area of one such parallelogram is given by 
\begin{equation}
\label{eq:parallelagram}
\frac{\textit{Area}({\mathcal A}) \textit{Area}({\mathcal B})}
{\big|\hol(\gamma_{\mathcal A}) \wedge \hol(\gamma_{\mathcal B})\big|}.
\end{equation}
Develop the two cylinders into the plane from an intersection as in figure \ref{fig:cylinder_intersection}. Define the vectors ${\bf a}$, ${\bf b}$, ${\bf c}$, and ${\bf d}$ as in the figure. Then the area of the two cylinders is given by the quantities
$$\textit{Area}({\mathcal A})=|\v{a} \wedge (\v{b}+\v{d})| \quad \text{ and} \quad \textit{Area}({\mathcal B})=|(\v{a}+\v{c}) \wedge \v{b}|.$$
Since the pair of vectors $\v{a}$ and $\v{c}$ are parallel as are the pair $\v{b}$ and $\v{d}$, we may write the product of areas as
$$\textit{Area}({\mathcal A})\textit{Area}({\mathcal B})=
|\v{a} \wedge \v{b}| |(\v{a}+\v{c}) \wedge (\v{b}+\v{d})|.$$
The wedge of the holonomies may be written as
$$\big|\hol(\gamma_{\mathcal A}) \wedge \hol(\gamma_{\mathcal B})\big|=|(\v{a}+\v{c}) \wedge (\v{b}+\v{d})|.$$
Thus, the quotient given in equation \ref{eq:parallelagram} is $|\v{a} \wedge \v{b}|$, the area of the 
parallelogram formed by the intersection.
\end{proof}

Let us apply this proposition to our problem. We use the facts that 
$\hat{H} \big( \hol(\gamma_{\mathcal A})\big)=\hol\big(\hat{H}(\gamma_{\mathcal A})\big)$, and that $\hat{H}$ preserves area.
Thus we have,
\begin{equation}
\textit{Area}( \hat{H}^n({\mathcal A}) \cap {\mathcal B})=
\frac{\big|\hat{H}_\ast^n(\hom{\gamma_{\mathcal A}}) \cap \hom{\gamma_{\mathcal B})} \big|}
{\big|H^n(\hol(\gamma_{\mathcal A})) \wedge \hol(\gamma_{\mathcal B})\big|}
\textit{Area}({\mathcal A}) \textit{Area}({\mathcal B}).
\end{equation}
Here $\hat{H}_\ast$ denotes the action of $\hat{H}$ on the first homology group, 
$\hom{\gamma_{\mathcal A}}$ and 
$\hom{\gamma_{\mathcal B}}$ are the homology classes of the curves,
and $H \in \SL(2,\R)$ is the derivative of $\hat{H}$.
The denominator of this expression is easily understood, but the numerator will require some real work.

\subsection{Homological Spaces}

Our surfaces have infinite genus. Therefore, we must be careful by what we mean by homology.

We use $S_{\geq 1}$ to denote any of the homeomorphic surfaces $S_c$, when we only care about topology.
We will be considering the homological space $H_1(S_{\geq 1} \smallsetminus \Sigma, \Z)$, 
which we define to be the linear space generated by finite weighted sums
homology classes of closed curves in $S_{\geq 1}$ with the singularities removed. 

Let $\{\Cyl_{1,0}^{i} ~|~i \in \N \}$ be the collection of horizontal cylinders ordered by increasing size and oriented rightward. 
Similarly, $\{\Cyl_{1,1}^{i} ~|~i \in \N \}$ will be the collection of cylinders whose core curves have slope $1$
and oriented in the direction $(1,1)$. We will use $\hom{\Cyl_{p,q}^i}$ to denote the homology class corresponding to the core curve of cylinder $\Cyl_{p,q}^i$ oriented in direction $(p,q) \in \R^2$.

\begin{proposition}[Generators for homology]
Elements of $H_1(S_{\geq 1} \smallsetminus \Sigma, \Z)$ can be written uniquely as a finite linear combination of elements of 
$\{\hom{\Cyl_{1,0}^{i}}\}_{i \in \N }$ and $\{\hom{\Cyl_{1,1}^{i}}\}_{i \in \N }$.
\end{proposition}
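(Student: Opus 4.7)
The plan is to establish the two halves of the statement---spanning and linear independence---separately, using the two transverse cylinder decompositions as families dual to each other under the algebraic intersection pairing on $S_{\geq 1}\shortsetminus\Sigma$.

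For linear independence, I would exploit the fact that the $\hom{\Cyl_{1,0}^i}$ are pairwise disjoint, as are the $\hom{\Cyl_{1,1}^j}$, so the pairing matrix $M_{ij}=\langle\hom{\Cyl_{1,0}^i},\hom{\Cyl_{1,1}^j}\rangle$ carries all the nontrivial intersection information. The containment facts $\mathcal{D}_{1,0}^n\subset\mathcal{D}_{1,1}^n\subset\mathcal{D}_{1,0}^{n+1}$ from Proposition~\ref{prop:containment} imply that $\Cyl_{1,1}^j$ is contained in $\mathcal{D}_{1,0}^{j+1}$, so $M_{ij}=0$ whenever $i>j+1$, while a glance at Figure~\ref{fig:surface_cylinders} shows that $M_{j+1,j}\neq 0$. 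This ``triangular with nonzero diagonal'' structure means that in any finite linear combination $\sum a_i\hom{\Cyl_{1,0}^i}+\sum b_j\hom{\Cyl_{1,1}^j}$, pairing against the $\hom{\Cyl_{1,1}^k}$ and $\hom{\Cyl_{1,0}^k}$ with largest indices and inducting downward forces all coefficients to vanish.

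For spanning, let $\gamma$ be a closed curve in $S_{\geq 1}\shortsetminus\Sigma$. Compactness places $\gamma$ inside $\mathcal{D}_{1,0}^N$ for some $N$. I would put $\gamma$ in general position with respect to the horizontal saddle connections, cutting it into finitely many arcs each of which runs through a single horizontal cylinder. Each arc is determined up to rel-endpoint homotopy by which boundary components it joins and by an integer counting its winding around the core. Summing these windings yields integers $a_1,\ldots,a_N$ with $\gamma$ homologous to $\sum_i a_i\hom{\Cyl_{1,0}^i}+\delta$, where $\delta$ is a 1-cycle supported on the union of the horizontal saddle connections of the cylinders $\gamma$ crosses. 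Applying the same procedure to $\delta$ with respect to the slope-one cylinder decomposition rewrites $\delta$ as $\sum_j b_j\hom{\Cyl_{1,1}^j}+\delta'$, where $\delta'$ is now supported on the intersection of horizontal and slope-one saddle connections, i.e.\ at the singular set $\Sigma$ itself, which is disjoint from our ambient space---so $\delta'$ is a trivial 1-cycle and the expansion is complete.

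The main obstacle is making the second step of the spanning argument clean: winding numbers near the infinite-angle cone points have to be tracked carefully, and one must confirm that small loops around singularities (which do exist in $S_{\geq 1}\shortsetminus\Sigma$) are already captured by combinations of core curves. The cleanest way to handle this, and the way I would ultimately write the proof, is to work with the common refinement of the two cylinder decompositions, whose 2-cells are the parallelograms $\Cyl_{1,0}^i\cap\Cyl_{1,1}^j$ and whose 1-skeleton lies in the union of horizontal and slope-one saddle connections. In this CW structure every compact region has a finite chain complex with explicit boundary maps, and a direct diagram chase produces both existence and uniqueness of the expansion simultaneously, while also verifying that loops encircling $\Sigma$ are homologous to appropriate combinations of cylinder cores.
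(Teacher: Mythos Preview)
Your approach is correct in outline but genuinely different from the paper's, and it is worth noting where the two diverge.

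The paper does not use the two cylinder decompositions against each other at all. Instead it cuts $S_{\geq 1}$ along the original polygon edges $\{\sigma_i\}_{i\in\Z}$ (the edges in Figure~\ref{fig:s1}) into the two simply connected pieces $Q_c^{\pm}$. Because the pieces are simply connected, a homology class is completely determined by its vector of signed crossing numbers with the $\sigma_i$, and the paper then checks directly that the cylinder cores hit these edges in a pattern that makes them a basis: only one horizontal or slope-one cylinder crosses $\sigma_0$, and exactly two cross each remaining $\sigma_i$, which forces both spanning and uniqueness by an easy induction on the edges. This is short and uses nothing beyond the polygon description of the surface.

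Your route---intersection pairing for independence, iterated cylinder decomposition or a CW structure for spanning---is more intrinsic and would generalise to surfaces without such a convenient two-piece polygonal presentation. The independence half is clean and entirely correct; the bidiagonal intersection matrix you describe is exactly the one recorded in the paper's formula for $\cap$. The spanning half, as you yourself flag, needs care: the sentence ``$\delta'$ is supported on the intersection of horizontal and slope-one saddle connections, i.e.\ at $\Sigma$'' is not literally what the two-step push produces, since after the second push $\delta'$ lives on slope-one saddle connections, not on the intersection. The rescue is that the cone angles are infinite, so punctured neighbourhoods of the singularities are contractible half-planes and carry no $H_1$; this is what makes your CW argument go through and what dissolves your worry about small loops around $\Sigma$. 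Once that observation is in hand, either your iterative reduction or the parallelogram CW structure finishes the job. The paper's argument sidesteps all of this by choosing a decomposition into simply connected pieces from the outset.
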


That is, cylinder which are of slope zero or one form a countable basis for $H_1(S_{\geq 1} \smallsetminus \Sigma, \Z)$.
Figure \ref{fig:surface_cylinders} illustrates the decompositions into horizontal and slope one cylinders.

\begin{proof}
As in figure \ref{fig:s1}, we can cut $S_{\geq 1}$ into two polygonal pieces along countably many saddle connections.
The resulting pieces are simply connected, so a homology class is determined by the signed number of times it crosses each of these saddle connections. Number the saddle connections $\{\sigma_{i}\}_{i\in\Z}$
counterclockwise from the point of view of the top piece so that the saddle connection 
of slope one in figure \ref{fig:s1} is numbered $\sigma_{0}$. It is sufficient to show that there is a unique way to 
write the curve $\gamma_i$ for $i \neq 0$ which enters the top piece through $\sigma_{0}$ then exits
the top piece through $\sigma_{i}$ then travels through the bottom piece to $\sigma_{0}$ and closes up. The
only way to write $\hom{\gamma_i}$ is by 
$$\hom{\gamma_i}=\begin{cases}
\sum_{j=1}^{-i-1} \hom{\Cyl_{1,1}^j} - \sum_{j=1}^{-i} \hom{\Cyl_{1,0}^j} & \textrm{if $i<0$} \\
\sum_{j=1}^{i} \hom{\Cyl_{1,1}^j} - \sum_{j=1}^{-i} \hom{\Cyl_{1,0}^j}  & \textrm{if $i>0$.} 
\end{cases}
$$
To see that this is the unique way to write $\hom{\gamma_i}$, note that there is only one horizontal or slope
one cylinder that intersects $\sigma_{0}$, and only two of these cylinders intersect each of the remaining 
$\sigma_i$.
\end{proof}

We will use the pair $\hp{\alpha}{\beta}$ where $\alpha=\langle \alpha_n \rangle_{n \in N}$ and $\beta=\langle \beta_n \rangle_{n \in N}$ 
are integral sequences which have bounded support (eventually all terms are zero) to denote the homology class
$$
\hp{\alpha}{\beta}= \sum_{n \in \N} \alpha_n \hom{\Cyl_{1,0}^n} + \sum_{n \in \N} \beta_n \hom{\Cyl_{1,1}^n}.$$
By the theorem, every element of $H_1(S_{\geq 1} \smallsetminus \Sigma, \Z)$ can be written uniquely in this way.

Given two homology classes, we can compute the algebraic intersection number between the two. 
This gives rise to an integral symplectic bilinear form on $H_1(S_{\geq 1} \smallsetminus \Sigma, \Z)$. 
It is not hard to check that $\cap:H_1(S_{\geq 1} \smallsetminus \Sigma, \Z) \times H_1(S_{\geq 1} \smallsetminus \Sigma, \Z) \to \Z$ is given by the formula
\begin{equation}
\label{eq:intersection_form}
\hp{\alpha^x}{\beta^x} \cap \hp{\alpha^y}{ \beta^y} = 
\sum_{n \in \Z} (\alpha^x_n \beta^y_n+\alpha^x_{n+1} \beta^y_n-\beta^x_n \alpha^y_n-\beta^x_n \alpha^y_{n+1})
\end{equation}

The translation surface structures on $S_{\geq 1}$, given by $S_c$ for $c \geq 1$, yield more structure.
The holonomy depends only on the homology class of the curve and 
acts linearly. Thus, for each $c \geq 1$ we have the holonomy maps
$\hol_c: H_1(S_{\geq 1} \smallsetminus \Sigma, \Z) \to \R^2$. 
These maps are naturally cohomology classes, but we will prefer to think of them as homology classes by 
using the intersection form. 
For instance, the vertical holonomy of a curve can be computed applying the intersection pairing
to the homology class of the curve and a (countable) sum of homology classes of the horizontal
cylinders weighted by the cylinder's widths. This requires more than a finite weighted sum of generators,
so we introduce the larger space 
$\HH_1(S_{\geq 1} \smallsetminus \Sigma, \R) \supset H_1(S_{\geq 1} \smallsetminus \Sigma, \R)$.
We define $\HH_1(S_{\geq 1} \smallsetminus \Sigma, \R)$ to be the space of all pairs
of sequences $\hp{\alpha}{\beta}$ of the form
$\alpha=\langle \alpha_n \in \R \rangle_{n \in N}$ and $\beta=\langle \beta_n \in \R \rangle_{n \in N}$.
We have no condition of bounded support on elements of $\HH_1(S_{\geq 1} \smallsetminus \Sigma, \R)$.
Equation \ref{eq:intersection_form} extends $\cap$ to a bilinear map
$$\cap:\HH_1(S_{\geq 1} \smallsetminus \Sigma, \R) \times H_1(S_{\geq 1} \smallsetminus \Sigma, \R) \to \R.$$

\begin{proposition}
\label{prop:hol}
For all $c \geq 1$, define the linear map ${\mathcal L}_c:\R^2 \to \HH_1(S_{\geq 1} \smallsetminus \Sigma, \R)$
by ${\mathcal L}_c:(a,b) \mapsto \hp{\alpha}{\beta}$, where $\hp{\alpha}{\beta}$ is given
coordinatewise by polynomials in $c=\cos \theta$ as follows.
$$\alpha_n=(a-b) \textit{Width}(\Cyl_{1,0}^n)=(a-b) \sum_{j=-(n-1)}^{n-1} \cos j\theta$$
$$\beta_n=b\textit{Width}(\Cyl_{1,1}^n)\sqrt{2}=\frac{
(2b) \sum_{j=-(n-1)}^n \cos j\theta}{1+\cos \theta}$$
 ${\mathcal L}_c$ satisfies the identity ${\mathcal L}_c(a,b) \cap \hom{x} = (a,b) \wedge \hol_c(\hom{x})$ for all
 $(a,b) \in \R^2$.
\end{proposition}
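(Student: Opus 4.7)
The plan is to proceed in two stages. The first establishes the explicit formulas for $\alpha_n$ and $\beta_n$ and verifies that $\mathcal{L}_c$ is well-defined as a polynomial in $c$, so that it makes sense for all $c \geq 1$ rather than merely for $c = \cos\theta$ with $\theta$ real. Using the factorization $T_c = S_\theta R_\theta S_\theta^{-1}$ from section \ref{sect:family}, one obtains $P_n = T_c^n(0,0) = (x_n, y_n)$ with $x_n = \sin(n\theta)/\sin\theta$ and $y_n = (1-\cos n\theta)/(1-\cos\theta)$ when $c = \cos\theta$. The cylinder circumferences computed in the proof of lemma \ref{lem:generators} are $2(x_{n-1}+x_n)$ for $\Cyl_{1,0}^n$ and $2\sqrt{2}(1+\cos\theta)x_n$ for $\Cyl_{1,1}^n$. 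Sum-to-product identities rewrite these as $2\sum_{j=-(n-1)}^{n-1}\cos j\theta$ and $\frac{2\sqrt{2}}{1+\cos\theta}\sum_{j=-(n-1)}^{n}\cos j\theta$ respectively, matching the Widths in the statement. That $\alpha_n$ and $\beta_n$ are polynomials in $c$ then follows from $\cos(j\theta) = T_j(c)$ together with the Chebyshev identity $\sum_{j=-(n-1)}^{n}\cos j\theta = (1+c)\,U_{n-1}(c)$, which cancels the $1+\cos\theta$ in the denominator of $\beta_n$.

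The second stage verifies $\mathcal{L}_c(a,b)\cap\hom{x} = (a,b)\wedge\hol_c(\hom{x})$. Both sides are linear in $\hom x \in H_1(S_{\geq 1}\smallsetminus\Sigma,\Z)$, and the sum in equation \ref{eq:intersection_form} defining the left side is finite on each basis element, so it suffices to check the identity on the generators $\hom{\Cyl_{1,0}^m}$ and $\hom{\Cyl_{1,1}^m}$. Let $C_m$ denote the circumference of $\Cyl_{1,0}^m$ and $C'_m$ the circumference of $\Cyl_{1,1}^m$. Applying equation \ref{eq:intersection_form}, the pairing $\mathcal{L}_c(a,b)\cap\hom{\Cyl_{1,0}^m}$ collapses to $-(\beta_m + \beta_{m-1})$, while the right side is $(a,b)\wedge(C_m,0) = -b\,C_m$; the required equality reduces to the sum-to-product identity $\sin(m\theta)+\sin((m-1)\theta) = 2\sin((m-\tfrac12)\theta)\cos(\theta/2)$. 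Similarly, the pairing against $\hom{\Cyl_{1,1}^m}$ collapses to $\alpha_m+\alpha_{m+1}$, the right side is $(a,b)\wedge(C'_m/\sqrt{2}, C'_m/\sqrt{2}) = (a-b)C'_m/\sqrt{2} = 2(a-b)(1+\cos\theta)x_m$, and the required equality is the companion identity $\sin((m-\tfrac12)\theta)+\sin((m+\tfrac12)\theta) = 2\sin(m\theta)\cos(\theta/2)$.

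The main obstacle I anticipate is the bookkeeping in equation \ref{eq:intersection_form}. Its signs encode the cyclic order in which the horizontal and slope one cylinder decompositions wrap around each of the two cone singularities, and one must verify that these are precisely the signs produced by expanding $(a,b)\wedge\hol_c(\hom x)$ on each generator. Once the sign conventions are pinned down and the two sum-to-product identities above are verified, the rest of the argument is a routine expansion on the generators. Linearity in $\hom x$ then extends the identity from the generators to all of $H_1(S_{\geq 1}\smallsetminus\Sigma,\Z)$, and the polynomial nature in $c$ established in the first stage confirms the formula uniformly for $c \geq 1$.
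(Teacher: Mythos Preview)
Your proof is correct but takes a different route from the paper. You fix $(a,b)$ and verify the identity by pairing against each homology generator $\hom{\Cyl_{1,0}^m}$ and $\hom{\Cyl_{1,1}^m}$, reducing everything to the two sum-to-product identities you state. The paper instead exploits bilinearity on the $\R^2$ side: it checks the identity on the basis $\{(1,0),(\tfrac{\sqrt2}{2},\tfrac{\sqrt2}{2})\}$ of $\R^2$, using the geometric observation that $(1,0)\wedge\hol_c(\hom{x})$ is the vertical component of holonomy, which is computed by intersecting $\hom{x}$ with the width-weighted sum $\sum_n \textit{Width}(\Cyl_{1,0}^n)\hom{\Cyl_{1,0}^n}$ (and similarly for the slope-one direction). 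This bypasses the explicit trigonometric verification entirely: once one recognises that each crossing of a horizontal cylinder contributes its height to the vertical holonomy, the formula for $\mathcal{L}_c(1,0)$ is immediate. Your approach, on the other hand, has the virtue of being a direct check against equation \ref{eq:intersection_form}, so the sign bookkeeping you flag as the main obstacle is confronted and resolved explicitly rather than absorbed into a geometric statement; your first stage also makes the polynomial dependence on $c$ more explicit than the paper does.
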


\begin{remark}
In the above proposition, the quantities $\alpha_n$ and $\beta_n$ should be viewed as polynomials in
$c=\cos \theta$ (e.g. $\cos (-2 \theta)=2 \cos^2 \theta-1=2 c^2-1$). The term $1+\cos \theta$ 
in the denominator of $\beta_n$ cancels with a factor from the numerator. Since all
$\alpha_n$ and $\beta_n$ are polynomials in $c$, it makes sense to define the expression 
${\mathcal L}_c(a,b)$ for all $c \in \C$.
\end{remark}

\begin{proof}[Proof of proposition \ref{prop:hol}]
It is sufficient to prove the statement on pairings basis of $\R^2$. We choose the basis 
$\{(1,0),(\frac{\sqrt{2}}{2},\frac{\sqrt{2}}{2}) \}$.
The quantity $(1,0)\wedge \hol_c(\hom{x})$ measures the $y$-coordinate of $\hol_c(\hom{x})$.
As mentioned in the paragraph before the proposition, 
this quantity may be computed by intersecting with the homology class 
$$\sum_{n=1}^{\infty} \textit{Width}(\Cyl_{1,0}^n) \hom{\Cyl_{1,0}^n}.$$
After some trigonometric simplifications, it can be checked that this is the same as ${\mathcal L}_c(1,0)$.
Similarly, $(\frac{\sqrt{2}}{2},\frac{\sqrt{2}}{2})\wedge \hol_c(\hom{x})$
measures distance in the slope $-1$ direction. This can be computed by intersecting with
$$\sum_{n=1}^{\infty} \textit{Width}(\Cyl_{1,1}^n) \hom{\Cyl_{1,1}^n}.$$
Again, this is the same as ${\mathcal L}_c(\frac{\sqrt{2}}{2},\frac{\sqrt{2}}{2})$.
\end{proof}

We can define the first cohomological group $H^1(S_{\geq 1} \smallsetminus \Sigma, \R)$ as
the set of all linear maps $\phi:H_1(S_{\geq 1} \smallsetminus \Sigma, \R) \to \R$. 
Via $\cap$, there is a natural map from
$\Phi:\HH_1(S_{\geq 1} \smallsetminus \Sigma, \R) \to H^1(S_{\geq 1} \smallsetminus \Sigma, \R)$ namely given $\hom{x} \in \HH_1(S_{\geq 1} \smallsetminus \Sigma, \R)$ take the map
$$\Phi(\hom{x}):H_1(S_{\geq 1} \smallsetminus \Sigma, \R) \to \R: \hom{y} \mapsto \hom{x} \cap \hom{y}.$$
But $\Phi$ is not injective. It can be checked that $\Phi$ has a one dimensional kernel spanned by the element
\begin{equation}
\label{eq:kernel}
\hom{z}=\hp{\langle 1, -1, 1, -1, \ldots \rangle}{ \langle 0, 0, 0, \ldots \rangle}.
\end{equation}

\subsection{Action of automorphisms on homology}
By theorem \ref{thm:affine_automorphisms}, the actions of the affine automorphism groups $\Aut(S_c)$
on $S_{\geq 1}$ are the same up to isotopy. Therefore, it makes sense to use $\Aut(S_{\geq 1})$ to denote
this group action, when we act on homology. We will describe the action of generators of $\Aut(S_{\geq 1})$
on $H_1(S_{\geq 1} \smallsetminus \Sigma, \Z)$.

If $\{\gamma_i\}_{i \in \Lambda}$ is a countable collection of disjoint curves on a surface $S$, then the action of
a right Dehn twist on homology is given by the map
$$H_1(S \smallsetminus \Sigma, \Z) \to H_1(S \smallsetminus \Sigma, \Z): 
\hom{x} \mapsto \hom{x}+\sum_{i \in \Lambda} (\hom{\gamma_i} \cap \hom{x}) \hom{\gamma_i},$$
which is independent of the orientations assigned to the $\gamma_i$.

The action of the element $\hat{D} \in \Aut(S_{\geq 1})$ on homology is given by
$$
\begin{array}{rcl}
\hat{D}_\ast & : &  H_1(S_{\geq 1} \smallsetminus \Sigma, \Z) \to H_1(S_{\geq 1} \smallsetminus \Sigma, \Z) \\
& : & \hom{x} \mapsto \hom{x}+\sum_{i \in \N} (\hom{\Cyl^i_{1,0}} \cap \hom{x}) \hom{\Cyl_{1,0}^i}.
\end{array}$$
If we take $\hp{\alpha}{\beta} \in H_1(S_{\geq 1} \smallsetminus \Sigma, \Z)$ and set 
$\hp{\alpha'}{\beta'}=\hat{D}_\ast(\hp{\alpha}{\beta})$ then
\begin{equation}
\alpha'_n=\alpha_n+\beta_{n-1}+\beta_{n} \quad \textrm{ and } \quad \beta'_n=\beta_n,
\end{equation}
where we interpret $\beta_0=0$ (since $0 \not\in \N$).

Similarly, the action of $\hat{E} \in \Aut(S_{\geq 1})$ on homology is 
$$\begin{array}{rcl}
\hat{E}_\ast & : &  H_1(S_{\geq 1} \smallsetminus \Sigma, \Z) \to H_1(S_{\geq 1} \smallsetminus \Sigma, \Z) \\
& : & \hom{x} \mapsto \hom{x}+\sum_{i \in \N} (\hom{\Cyl^i_{1,1}} \cap \hom{x}) \hom{\Cyl_{1,1}^i}.
\end{array}$$
If we set $\hp{\alpha'}{\beta'}=\hat{E}_\ast(\hp{\alpha}{\beta})$, then 
\begin{equation}
\alpha'_n=\alpha_n \quad \textrm{ and } \quad \beta'_n=\beta_n-\alpha_n-\alpha_{n+1}.
\end{equation}

Finally, we describe the action of $\hat{A} \in \Aut(S_{\geq 1})$ on homology. If 
$\hp{\alpha'}{\beta'}=\hat{A}_\ast(\hp{\alpha}{\beta})$ then
\begin{equation}
\label{eq:A-action}
\alpha'_n=-\alpha_n -\beta_{n-1}-\beta_n \quad \textrm{ and } \quad \beta'_n=\beta_n.
\end{equation}

The action of every $\hat{G} \in \Aut(S_{\geq 1})$ on homology is {\em sparse} in the sense that the
formula for the $\alpha'_n$ and $\beta'_n$ of terms
$\hp{\alpha'}{\beta'}=\hat{G}_\ast(\hp{\alpha}{\beta})$ only involve terms of the form
$\alpha_m$ and $\beta_m$ for $m$  within some constant $k(\hat{G})$ of $n$ ($|m-n|<k$).
Thus, the action extends to $\HH_1(S \smallsetminus \Sigma, \Z)$. Furthermore,
it must be that $\hat{G}_\ast (\hom{y}) \cap \hat{G}_\ast (\hom{x})=\hom{y} \cap \hom{x}$
for all $\hom{x} \in H_1(S \smallsetminus \Sigma, \Z)$ and $y \in \HH_1(S \smallsetminus \Sigma, \Z)$.

\subsection{Invariant Planes}
Inside the action of $\Aut(S_{\geq 1})$ on the homological group $\HH_1(S_{\geq 1} \smallsetminus \Sigma, \R)$, there are embedded copies of 
the group actions on $\R^2$ given by the Veech groups $\Gamma(S_c)=\G^\pm_c$. 
Actually, the groups $\G_c$ make sense for all $c \in \R$ as representations
of the abstract group $\G^\pm$ into $\SL^\pm(2,\R)$. See theorem \ref{thm:veech_groups} 
for the definition of $\G_c$.
Given a $G \in \G^\pm$, we have the corresponding $\hat{G} \in \Aut(S_{\geq 1})$ as well
as the Veech group elements $G_c\in \G_c$ for all $c\in \R$.

\begin{theorem}
\label{thm:commutative_diagram}
For all $c \in \R$, the planes spanned by $h_c$ and $v_c$ are invariant. Furthermore, the linear map 
${\mathcal L}_c:\R^2 \to \HH_1(S_{\geq 1} \smallsetminus \Sigma, \R)$ defined in proposition \ref{prop:hol}
completes the commutative diagram
$$\begin{CD}
\R^2 @>G_c>> \R^2\\
@VV{\mathcal L}_cV @VV{\mathcal L}_cV\\
\HH_1(S_{\geq 1} \smallsetminus \Sigma, \R) @>\hat{G}_{\ast}>> \HH_1(S_{\geq 1} \smallsetminus \Sigma, \R)
\end{CD}$$
for all $G \in \G^\pm$.
\end{theorem}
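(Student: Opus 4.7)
The plan is to reduce the theorem to a verification on the generators $\{-I,A,D,E\}$ of $\G^\pm$. Since the collection of $G\in\G^\pm$ for which the commutative diagram holds forms a subgroup (both $G \mapsto G_c$ and $G \mapsto \hat{G}_\ast$ are homomorphisms, as is composition with ${\mathcal L}_c$), it suffices to check the identity $\hat{G}_\ast \circ {\mathcal L}_c = {\mathcal L}_c \circ G_c$ for these four elements. Invariance of the plane ${\mathcal L}_c(\R^2) \subset \HH_1(S_{\geq 1} \smallsetminus \Sigma, \R)$ is then an immediate consequence. Once everything is checked for $c\geq 1$, one gets the identity for all $c\in \R$ (and even $c\in\C$) for free, since both sides of each coordinate of $\hat{G}_\ast \circ {\mathcal L}_c$ and ${\mathcal L}_c \circ G_c$ are polynomials in $c$, and a polynomial identity holding on $[1,\infty)$ holds everywhere.

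The central step is the pairing argument. Given $(a,b)\in\R^2$ and $\hom{x} \in H_1(S_{\geq 1}\smallsetminus \Sigma,\R)$, set $\epsilon = \det G_c = \pm 1$. Using that $\hat{G}_\ast$ preserves the intersection pairing up to the sign $\epsilon$ (a homeomorphism preserves intersection signs iff it preserves orientation), that $\hol_c\circ \hat{G}_\ast = G_c \circ \hol_c$ (since $\hat G$ is affine with derivative $G_c$), and the identity $u \wedge G_c^{-1}v = \epsilon\, G_c(u)\wedge v$, one computes
\begin{equation*}
\hat{G}_\ast({\mathcal L}_c(a,b)) \cap \hom{x}
= \epsilon\,{\mathcal L}_c(a,b) \cap \hat{G}_\ast^{-1}(\hom{x})
= \epsilon\,(a,b)\wedge G_c^{-1}\hol_c(\hom{x})
= G_c(a,b)\wedge \hol_c(\hom{x}),
\end{equation*}
which by Proposition \ref{prop:hol} equals ${\mathcal L}_c(G_c(a,b)) \cap \hom{x}$. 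The two factors of $\epsilon$ cancel exactly, which is the point of requiring orientation-reversing affine automorphisms in the theorem.

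The above shows that the difference $\hat{G}_\ast({\mathcal L}_c(a,b)) - {\mathcal L}_c(G_c(a,b))$ lies in $\ker \Phi$, which is the one-dimensional space spanned by $\hom{z}$ from equation \ref{eq:kernel}. This is the main obstacle: the pairing argument alone determines $\hat{G}_\ast({\mathcal L}_c(a,b))$ only modulo a scalar multiple of $\hom{z}$. To kill this ambiguity I would check directly, on each of the four generators, that the first coordinate $\alpha_1$ of $\hat{G}_\ast({\mathcal L}_c(a,b))$ matches the first coordinate of ${\mathcal L}_c(G_c(a,b))$; since $\hom{z}$ has $\alpha_1 = 1$, the coefficient of $\hom{z}$ is then forced to vanish. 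The required computation is short: for $\hat D$, the formula $\alpha_1' = \alpha_1+\beta_0+\beta_1 = (a-b)+0+2b = a+b$ matches the first coordinate of ${\mathcal L}_c(D_c(a,b)) = {\mathcal L}_c(a+2b,b)$; for $\hat E$ one uses the analogous formula together with the polynomial identity $\alpha_2=(a-b)(1+2c)$; and $\hat A$, $\widehat{-I}$ are routine. Having nailed down the $\hom{z}$ component, the commutative diagram is established on generators, and the invariance statement follows.
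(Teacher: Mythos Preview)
Your proposal is correct and follows essentially the same route as the paper's proof: use Proposition~\ref{prop:hol} to show $\hat{G}_\ast({\mathcal L}_c(a,b))$ and ${\mathcal L}_c(G_c(a,b))$ have the same intersection pairing with every class, conclude they differ by a multiple of $\hom{z}$, and then kill that multiple by matching the $\alpha_1$ coordinate on generators. Your treatment is in fact slightly more careful than the paper's in two respects---you track the sign $\epsilon=\det G_c$ explicitly (the paper glosses over this), and you make explicit the polynomial-in-$c$ argument that extends the identity from $c\geq 1$ to all $c\in\R$.
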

\begin{proof}
By proposition \ref{prop:hol},
it follows that for all $\hom{x} \in H_1(S_{\geq 1} \smallsetminus \Sigma, \R)$, $(a,b) \in \R^2$, and $G \in \G^\pm$
we have
$$
{\mathcal L}_c(a,b) \cap \hat{G}_\ast^{-1}( \hom{x})
=(a,b) \wedge \hol_c \big(\hat{G}_\ast^{-1} (\hom{x})\big)
=(a,b) \wedge G_c^{-1} \big( \hol_c(\hom{x})\big)$$
By applying the invariance of $\wedge$ under $G_c$ and the invariance of $\cap$ under $\hat{G}_\ast$, we deduce
$$G_c(a,b) \wedge \hol_c(\hom{x}) = \hat{G}_\ast({\mathcal L}_c(a,b)) \cap \hom{x}.$$
Thus, by an application of proposition \ref{prop:hol}, to the left side we have
$${\mathcal L}_c( G_c(a,b)) \cap \hom{x}=\hat{G}_\ast({\mathcal L}_c(a, b)) \cap \hom{x}.$$
This equation says that the actions of $\hat{G}_\ast({\mathcal L}_c(a, b)) \cap$ and ${\mathcal L}_c( G_c(a,b)) \cap$
on homology are the same. Unfortunately, by the remarks before the definition of 
$\hom{z} \in \HH_1(S_{\geq 1} \smallsetminus \Sigma, \Z)$ in equation
\ref{eq:kernel}, this only implies that
$\hat{G}_\ast({\mathcal L}_c(a, b))={\mathcal L}_c( G_c(a,b))+s \hom{z}$ 
for some $s \in \R$.
It must therefore be explicitly checked that $s=0$ for the generators $G\in \{A,D,E\}$. 

For the readers sake, we only check the case of $G=A$. Let
$\hp{\alpha}{\beta}=\hat{A}_\ast({\mathcal L}_c(a, b))$ and $\hp{\alpha'}{\beta'}={\mathcal L}_c( A_c(a,b))$.
It is sufficient to check that $\alpha_1=\alpha'_1$. This is a straightforward calculation utilizing 
the $\hat{A}_\ast$ action given in 
equation \ref{eq:A-action}, the definition of $A_c$ given in theorem \ref{thm:veech_groups}, and
the definition of ${\mathcal L}_c$ given in proposition \ref{prop:hol}.
We compute $\alpha_1=-a-b=\alpha'_1$.
\begin{comment} %other cases

Let
$\hp{\alpha}{\beta}=\hat{D}_\ast({\mathcal L}_c(a, b))$ and $\hp{\alpha'}{\beta'}={\mathcal L}_c( D_c(a,b))$.
We compute $\alpha_1=a+b=\alpha'_1$.

Let
$\hp{\alpha}{\beta}=\hat{E}_\ast({\mathcal L}_c(a, b))$ and $\hp{\alpha'}{\beta'}={\mathcal L}_c( E_c(a,b))$.
We compute $\alpha_1=a-b=\alpha'_1$.
\end{comment}
\end{proof}

We will now show that we can write every element of $\hom{x} \in H_1(S_{\geq 1} \smallsetminus \Sigma, \R)$ as an integral of 
vectors in the invariant planes ${\mathcal L}_{\cos \theta}$ for $\theta \in [-\pi, \pi]$. That is, we will find 
functions $r, s:[-\pi, \pi] \to \R$ so that
$$\hom{x}=
\int_{- \pi}^{\pi} {\mathcal L}_{\cos \theta} \big( r( \theta), s( \theta)\big)~d\theta.$$
We interpret such an integral {\em coordinatewise}. That is, we integrate each coordinate independently.
Then theorem \ref{thm:commutative_diagram} implies that for all $\hat{G} \in \Aut(S_{\geq 1})$, 
\begin{equation}
\label{eq:commutative_action}
\hat{G}_\ast( \hom{x})=
\int_{-\pi}^{\pi} {\mathcal L}_{\cos \theta} \Big(G_{cos \theta} \big(r( \theta), s( \theta)\big)\Big)~d\theta.
\end{equation}
Indeed, it turns out that there is a natural choice for $r$ and $s$ as polynomials in $c=\cos \theta$.

Let $\Poly$ denote the space of all polynomials. $\Poly^2$ will
denote the space of all pairs of polynomials.

\begin{theorem}
\label{thm:homology_as_integral}
Consider the linear embedding $\psi:H_1(S_{\geq 1} \smallsetminus \Sigma, \R) \to \Poly^2$ determined
by the following images of the basis elements, which are polynomials in $c=\cos \theta$.
$$\psi:\hom{\Cyl_{1,0}^n} \mapsto \big(  2 \cos((n-1) \theta)-2 \cos(n \theta), 0 \big)$$
$$\psi:\hom{\Cyl_{1,1}^n} \mapsto ((\cos((n-1) \theta)-\cos((n+1) \theta))\big( 1,1 \big)$$
This embedding satisfies the following for all $\hom{x} \in H_1(S_{\geq 1} \smallsetminus \Sigma, \R)$.
\begin{enumerate}
\item $\hom{x}=\frac{1}{4 \pi} \int_{- \pi}^{\pi} {\mathcal L}_{\cos \theta} \big( \psi(\hom{x})(\cos \theta)\big) ~d\theta$ coordinatewise.
\item $\psi(\hom{x})(1)=(0,0)$ and $\frac{d}{d\theta} \psi(\hom{x})(\cos \theta) |_{\theta=0}=(0,0)$.
\item $\hol_1(\hom{x})=\frac{d^2}{d\theta^2} \psi(\hom{x})( \cos \theta)|_{\theta=0}$.
\end{enumerate}
\end{theorem}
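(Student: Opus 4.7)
The plan is to reduce all three claims to computations on the basis elements $\hom{\Cyl_{1,0}^n}$ and $\hom{\Cyl_{1,1}^n}$; this is legitimate because both sides of each assertion are linear in $\hom{x}$, the integral is taken coordinatewise, and evaluation/differentiation at $\theta = 0$ commutes with the embedding $\psi$.

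Statements (2) and (3) will then follow by direct inspection. Each basis element maps under $\psi$ to a trigonometric polynomial of the form $C \cdot \bigl(\cos(k\theta) - \cos(l\theta)\bigr)$, which vanishes at $\theta = 0$ and has vanishing first $\theta$-derivative there. For (3), the identity $\frac{d^2}{d\theta^2}\cos(k\theta)|_{\theta=0} = -k^2$ produces candidate holonomies $(4n-2, 0)$ for $\hom{\Cyl_{1,0}^n}$ and $4n(1,1)$ for $\hom{\Cyl_{1,1}^n}$. These match $\hol_1$ as computed inside Lemma~\ref{lem:generators}: specializing $c = 1$ and $\hat{x} = n-1$ gives horizontal-cylinder circumference $4n-2$, and specializing $c = 1$, $\hat{x} = n$ gives slope-one-cylinder circumference $4\sqrt{2}\,n$ in the direction $(1,1)/\sqrt{2}$.

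The real content is (1), which I would verify by substituting Proposition~\ref{prop:hol}'s formula for ${\mathcal L}_c$ into the integrand and checking that the $m$-th coordinate of the result is the Kronecker $\delta_{nm}$. For $\hom{\Cyl_{1,0}^n}$, the second entry of $\psi(\hom{\Cyl_{1,0}^n})$ is zero, so every $\beta$-coordinate vanishes identically and the $m$-th $\alpha$-coordinate of the integrand becomes $\bigl(2\cos((n-1)\theta) - 2\cos(n\theta)\bigr)\sum_{j=-(m-1)}^{m-1}\cos(j\theta)$. Expanding the symmetric sum as $1 + 2\sum_{j=1}^{m-1}\cos(j\theta)$ and applying $\int_{-\pi}^{\pi}\cos(k\theta)\cos(l\theta)\,d\theta = \pi \delta_{kl}$ for $k,l \geq 1$ (with value $2\pi$ when $k = l = 0$) yields $\delta_{nm}$, with the boundary case $n = 1$ handled separately via the constant term. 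For $\hom{\Cyl_{1,1}^n}$, the first and second entries of $\psi$ agree, so every $\alpha$-coordinate is zero, and I would simplify $\beta_m$ using the identities $\cos((n-1)\theta) - \cos((n+1)\theta) = 2\sin(n\theta)\sin\theta$, $\sum_{j=-(m-1)}^{m}\cos(j\theta) = \frac{\sin(m\theta)\cos(\theta/2)}{\sin(\theta/2)}$, and $1 + \cos\theta = 2\cos^2(\theta/2)$; the denominator cancels entirely, leaving $\beta_m = 4\sin(n\theta)\sin(m\theta)$, which integrates to $4\pi\,\delta_{nm}$ by sine orthogonality.

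The main obstacle I anticipate is the case bookkeeping in the $\hom{\Cyl_{1,0}^n}$ computation: the two frequencies $n-1$ and $n$ in the difference each produce a Kronecker contribution from the partial sum $1 + 2\sum_{j=1}^{m-1}\cos(j\theta)$, and one must track how these contributions add, cancel, or disappear across the ranges $m < n$, $m = n$, $m > n$, with special attention to the corner cases $n = 1$ and $m = 1$ where the constant term and empty sum intervene. By contrast, the slope-one case is clean because the three trigonometric identities collapse $\beta_m$ into a single sine product before integration.
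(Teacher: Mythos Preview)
Your proposal is correct and follows essentially the same approach as the paper's proof: reduce to basis elements by linearity, then verify each claim by direct Fourier-type computation. Your route through the slope-one $\beta_m$ coordinate via the half-angle identities (collapsing to $4\sin(n\theta)\sin(m\theta)$) is in fact a bit cleaner than the paper's, which expands $(\cos((n-1)\theta)-\cos((n+1)\theta))/(1+\cos\theta)$ as an alternating-sign cosine sum before reducing the product to $2\cos((n-k)\theta)-2\cos((n+k)\theta)$; the two integrands are identical by product-to-sum, so the difference is purely cosmetic.
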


\begin{proof}
We first prove statement 1. It is enough to prove this for our basis of $H_1(S_{\geq 1} \smallsetminus \Sigma, \R)$.
We take the first case of $\hom{\Cyl_{1,0}^n}$.
Let 
$$\hp{\alpha}{\beta}=\frac{1}{4 \pi} \int_{- \pi}^{\pi} {\mathcal L}_{\cos \theta} (2 \cos((n-1) \theta)-2 \cos(n \theta))~d\theta$$
We wish to show $\hp{\alpha}{\beta}=\hom{\Cyl_{1,0}^n}$. Then 
$$\alpha_k=\frac{1}{4 \pi} \int_{- \pi}^{\pi} \big(2 \cos((n-1) \theta)-2 \cos(n \theta)\big) \sum_{j=-(n-1)}^{n-1} \cos j\theta ~d\theta.$$
Using ideas from Fourier series, 
it is not hard to see that $\alpha_k=1$ if $n=k$ and $\alpha_k=0$ otherwise. The 
$\beta_k$ must be zero, because ${\mathcal L}_{\cos \theta}(1,0)$ has zero $\beta$ portion, regardless of $\theta$.
Now we take the second case of $\hom{\Cyl_{1,1}^n}$. This time let 
$$\hp{\alpha}{\beta}=\frac{1}{4 \pi} \int_{- \pi}^{\pi} (\cos((n-1) \theta)-\cos((n+1) \theta)) {\mathcal L}_{\cos \theta} (1,1)~d\theta.$$
The $\alpha$ coordinates of ${\mathcal L}_{\cos \theta} (1,1)$ are all zero, so for
all $k$ we have $\alpha_k=0$. We also have
$$\beta_k=\frac{1}{4 \pi} \int_{- \pi}^{\pi} \big(\cos((n-1) \theta)-\cos((n+1) \theta)\big) 
{\textstyle \Big(\frac{2 \sum_{j=-(n-1)}^n \cos j\theta}{1+\cos \theta}\Big)}~d\theta.$$
Trigonometric tricks can be used to show that for all $n \geq 1$,
$$\frac{\cos((n-1) \theta)-\cos((n+1) \theta)}{1 + \cos \theta}=
2 \sum_{j=-(n-1)}^{n} (-1)^{j+n+1} \cos j\theta$$
Interestingly, the expression inside the integral reduces to a symmetric expression in $n$ and $k$.
$$\beta_k=\frac{1}{4 \pi} \int_{- \pi}^{\pi} 2 \cos ((n-k) \theta) - 2 \cos ((n+k) \theta)~d\theta.$$
So $\beta_k=1$ when $n=k$ and $\beta_k=0$ otherwise.

Statement $2$ is a trivial calculation, so we omit it. Statement $3$ is trivial as well.
$$\hol_1(\hom{\Cyl_{1,0}^n})=(4n-2,0)=\frac{d^2}{d\theta^2}(2 \cos((n-1) \theta)-2 \cos(n \theta),0)|_{\theta=0}$$
$$\hol_1(\hom{\Cyl_{1,1}^n})=(4n,4n)=\frac{d^2}{d\theta^2}((\cos((n-1) \theta)-\cos((n+1) \theta))\big( 1,1 \big)|_{\theta=0}$$
\end{proof}

\subsection{An asymptotic formula}

We say two sequences, $\langle r_n \rangle$ and $\langle s_n \rangle$, 
are {\em asymptotic} if $\lim_{n \to \infty} \frac{r_n}{s_n}=1$.
We denote this by $r_n \asymp s_n$.

\begin{lemma}[Multiplicative asymptotic growth] 
\label{lem:general asymptotics}
Let $\epsilon>0$, $a>0$, and $f,g:[-\epsilon,\epsilon] \to \R$ be $C^2$ functions satisfying 
\begin{enumerate}
\item $f(0)=\frac{d}{dx}f (0)=0$ and $\frac{d^2}{dx^2}f(0)=2$.
%$f(x)=x^2+\textit{higher order terms}$,
\item $g(0)=1$, $\frac{d}{dx}g(0)=0$, and $\frac{d^2}{dx^2}g(0)=-2a$.
%$g(x)=1-a x^2+\textit{higher order terms}$,
\item $|g(x)|<1$ on $[-\epsilon,0) \cup (0,\epsilon]$.
\end{enumerate}
Then, the sequences given by
$$r_n=\int_{- \epsilon}^{\epsilon} f(x) g^n(x)~dx 
\qquad \textrm{and} \qquad
s_n=\frac{\sqrt{\pi}}{2 a^\frac{3}{2} n^\frac{3}{2}}$$
are asymptotic.
\end{lemma}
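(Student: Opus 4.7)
The plan is to execute a standard Laplace's method argument, exploiting the fact that $g^n$ concentrates at $x=0$ as $n\to\infty$. By hypothesis, the Taylor expansions at $0$ are $f(x) = x^2 + O(x^3)$ and $g(x) = 1 - ax^2 + O(x^3)$. Since $g(0)=1$ and $g$ is continuous, $g>0$ on some neighborhood of $0$, so I may write $g(x)^n = \exp(n \log g(x))$ there, with $n\log g(x) = -anx^2 + O(nx^3)$.

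First I would split the integral: fix a small $\delta \in (0,\epsilon]$ on which $g$ is positive and on which the Taylor expansions above are valid in a quantitative way. Write $r_n = I_n + E_n$, where $I_n = \int_{-\delta}^{\delta} f(x) g(x)^n\,dx$ and $E_n = \int_{[-\epsilon,\epsilon]\setminus[-\delta,\delta]} f(x) g(x)^n\,dx$. Condition (3) gives $\sup_{|x|\in[\delta,\epsilon]} |g(x)| \leq \rho < 1$, so $|E_n| \leq \rho^n \cdot 2\epsilon \sup|f|$, which decays geometrically and is therefore $o(n^{-3/2})$.

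The main term $I_n$ I would analyze by the substitution $u = \sqrt{n}\,x$, yielding
\begin{equation*}
n^{3/2} I_n = \int_{-\delta\sqrt{n}}^{\delta\sqrt{n}} n \cdot f(u/\sqrt{n}) \cdot \exp\!\bigl(n \log g(u/\sqrt{n})\bigr)\,du.
\end{equation*}
Using the expansions above, the integrand converges pointwise in $u$ to $u^2 e^{-au^2}$, since $n f(u/\sqrt n) \to u^2$ and $n \log g(u/\sqrt n) \to -au^2$. The punch line is then the Gaussian integral
\begin{equation*}
\int_{-\infty}^{\infty} u^2 e^{-au^2}\,du \;=\; \frac{\sqrt{\pi}}{2 a^{3/2}},
\end{equation*}
which equals $\lim_{n\to\infty} n^{3/2} s_n$, giving $r_n \asymp s_n$.

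The main obstacle is justifying the interchange of limit and integral via dominated convergence. For this I would find a uniform bound $g(x) \leq e^{-bx^2}$ for some $b \in (0,a)$, valid on $[-\delta,\delta]$ after shrinking $\delta$ (possible because $g(x) = 1 - ax^2 + O(x^3)$ implies $\log g(x) \leq -bx^2$ for $b$ slightly less than $a$ and $\delta$ small enough). Then $|n f(u/\sqrt n) \exp(n\log g(u/\sqrt n))| \leq C u^2 e^{-bu^2}$ for a constant $C$ (using $|f(x)| \leq C' x^2$ on $[-\delta,\delta]$), which is integrable on $\R$. Dominated convergence then delivers the limit, and the ratio $r_n/s_n \to 1$ follows.
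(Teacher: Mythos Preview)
Your Laplace-method argument is correct. One small quibble: with only $C^2$ regularity you get $f(x)=x^2+o(x^2)$ and $g(x)=1-ax^2+o(x^2)$, not $O(x^3)$; but every step you actually use---pointwise convergence of $nf(u/\sqrt{n})$ and $n\log g(u/\sqrt{n})$, the bound $|f(x)|\le C'x^2$, and the bound $\log g(x)\le -bx^2$ for $b<a$ on a small interval---only needs the $o(x^2)$ remainder, so the argument goes through unchanged.

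The paper takes a genuinely different route. Instead of Laplace's method, it first proves an exact integral formula for the Catalan numbers,
\[
c_n=\frac{4^{n-1}}{\pi}\int_{-\pi}^{\pi}(2-2\cos\theta)\Big(\frac{1+\cos\theta}{2}\Big)^n\,d\theta,
\]
via repeated integration by parts, and then invokes Stirling's formula (through the known asymptotic $c_n\asymp 4^n/(\sqrt{\pi}\,n^{3/2})$) to get the lemma for the specific pair $f_a(\theta)=\frac{1-\cos(2\sqrt{a}\,\theta)}{2a}$, $g_a(\theta)=\frac{1+\cos(2\sqrt{a}\,\theta)}{2}$. The general $C^2$ case is then deduced by a squeeze argument: for any $a_+>a$ one can find a small interval on which $f_{a_+}$ and $g_{a_+}$ (suitably scaled) lie below $f$ and $g$, giving a lower bound, and symmetrically for the upper bound. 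Your approach is more direct and self-contained---it avoids the detour through Catalan numbers and Stirling, and handles the general $C^2$ case in one pass rather than by comparison with a model. The paper's approach, on the other hand, yields the pleasant identity with the Catalan numbers as a byproduct.
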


We prove this multiplicative asymptotic growth lemma in section \ref{ss:growth_lemma} of the appendix.

\subsection{Asymptotic growth of homology classes}
\label{sect:asymptotic_homology}

We will say a sequence of homology classes $\hom{x_m}=\hp{\alpha^m}{ \beta^m}$ 
in $\HH_1(S_{\geq 1} \smallsetminus \Sigma, \R)$
converges {\em coordinatewise} to 
$\hom{x_\infty}=\hp{\alpha^\infty}{\beta^\infty} \in \HH_1(S_{\geq 1} \smallsetminus \Sigma, \R)$
if for every $n$
$$\lim_{m \to \infty} \alpha^m_n = \alpha^\infty_n 
\quad \textrm{ and } \quad
\lim_{m \to \infty} \beta^m_n = \beta^\infty_n$$
%This is slightly stronger than the statement that for all 
%$\hom{y} \in H_1(S_{\geq 1} \smallsetminus \Sigma, \Z)$, 
%$\lim_{m \to \infty} \hom{x_m} \cap \hom{y}=\hom{x_\infty} \cap \hom{y}$. 
%(Recall equation \ref{eq:kernel}.)

An element $\hat{G} \in \Aut(S_{\geq 1})$ acts {\em hyperbolically} on $S_1$
if there is an eigenvalue $\lambda_1>1$ of $G_1={\bf D}(\hat{G}) \in \Gamma(S_1)$.
Let ${\bf v^+_1} \in \R^2$ be a unit length eigenvector with the eigenvalue $\lambda_1$,
and let ${\bf v^-_1} \in \R^2$ be a unit eigenvector with eigenvalue $\pm 1/\lambda_1$.
\begin{theorem}[Asymptotics of Homology]
\label{thm:asymptotics_of_homology}
There is a constant $\kappa_G> 0$ depending only on $\hat{G}$ so that
given any homology class $\hom{x} \in H_1(S_{\geq 1} \smallsetminus \Sigma, \R)$, 
the sequence of homology classes 
$$\hom{x_m}=\frac{m^{\frac{3}{2}} \hat{G}_\ast^m(\hom{x})}{\lambda_1^m} \in H_1(S_{\geq 1} \smallsetminus \Sigma, \R)$$
converge coordinatewise as $m \to \infty$ to 
$$\hom{x_\infty}=\kappa_G {\mathcal L_1}(\textit{proj}_{{\bf v^-_1},{\bf v^+_1}} \hol_1(\hom{x})),$$
where $\textit{proj}_{{\bf v^-_1},{\bf v^+_1}} \hol_1(\hom{x})$ 
denotes the projection of the holonomy vector in the direction 
${\bf v^-_1}$ onto the direction ${\bf v^+_1}$.
Furthermore, 
$$\kappa_G=\frac{1}{2 \sqrt{\pi}} \big(\frac{d}{dc} [2 \log \lambda_c] \Big|_{c=1} \big)^{\frac{-3}{2}}$$
where $\log$ is the natural logarithm and $\lambda_c$ denotes the greatest eigenvalue of $G_c$. Note,
$2 \log \lambda_c$ is geometrically the translation distance of the action of $G_c$ on the hyperbolic plane.
\end{theorem}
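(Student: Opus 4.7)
The plan is to apply Laplace's method to the integral representation of $\hat G_*^m(\hom{x})$ obtained by combining Theorems \ref{thm:homology_as_integral} and \ref{thm:commutative_diagram}. Applying $\hat G_*^m$ to statement (1) of Theorem \ref{thm:homology_as_integral} and pushing it past $\mathcal L_{\cos\theta}$ via the commutative diagram gives
$$\hat G_*^m(\hom{x})\;=\;\frac{1}{4\pi}\int_{-\pi}^{\pi}\mathcal L_{\cos\theta}\bigl(G_{\cos\theta}^{m}\,\psi(\hom{x})(\cos\theta)\bigr)\,d\theta$$
coordinatewise, so it suffices to analyze a single entry ($\alpha_n$ or $\beta_n$) of this integral. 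In a neighborhood of $c=1$ the matrix $G_c$ is hyperbolic with real-analytic top eigenvalue $\lambda_c$, subdominant eigenvalue $\mu_c$ (satisfying $|\mu_c|<1<\lambda_c$), and spectral projectors $P^{+}_c,P^{-}_c$, so there I decompose $G_c^m=\lambda_c^m P^{+}_c+\mu_c^m P^{-}_c$. The $\mu_c^m$-term is uniformly bounded and contributes $O(m^{3/2}\lambda_1^{-m})\to 0$ after the normalization of the theorem. Outside this neighborhood, $G_{\cos\theta}$ is either elliptic (bounded norm), parabolic (linear growth), or hyperbolic with $\lambda_{\cos\theta}<\lambda_1$, and in each case the contribution, after dividing by $\lambda_1^m$ and scaling by $m^{3/2}$, is exponentially small.

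Once the integral is localized near $\theta=0$, a fixed $\alpha_n$-coordinate reduces to the asymptotic of
$$\frac{m^{3/2}}{4\pi}\int_{-\delta}^{\delta}\tilde f(\theta)\,g(\theta)^{m}\,d\theta,$$
where $g(\theta)=\lambda_{\cos\theta}/\lambda_1$ and $\tilde f(\theta)=\bigl[\mathcal L_{\cos\theta}\bigl(P^{+}_{\cos\theta}\psi(\hom{x})(\cos\theta)\bigr)\bigr]_{\alpha_n}$. Statement (2) of Theorem \ref{thm:homology_as_integral} together with the real-analyticity of $P^{+}_c$ at $c=1$ gives $\tilde f(0)=\tilde f'(0)=0$, while statement (3), combined with the observation that $P^{+}_1$ is the projection onto the $\lambda_1$-eigenspace along the $\mu_1$-eigenspace, yields
$$\tilde f''(0)\;=\;\bigl[\mathcal L_1\bigl(P^{+}_1\,\hol_1(\hom{x})\bigr)\bigr]_{\alpha_n}\;=\;\bigl[\mathcal L_1\bigl(\textit{proj}_{{\bf v^{-}_1},{\bf v^{+}_1}}\hol_1(\hom{x})\bigr)\bigr]_{\alpha_n}.$$
For $g$ I have $g(0)=1$ and $g'(0)=0$ (since $\cos'(0)=0$), and a chain-rule computation gives $g''(0)=-\tfrac{d}{dc}\log\lambda_c\big|_{c=1}=-L/2$ with $L=\tfrac{d}{dc}[2\log\lambda_c]\big|_{c=1}$. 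Rescaling so that $f''(0)=2$ in Lemma \ref{lem:general asymptotics} and using $a=L/4$, I arrive at
$$\lim_{m\to\infty}m^{3/2}\lambda_1^{-m}\bigl[\hat G_*^m(\hom{x})\bigr]_{\alpha_n}\;=\;\frac{1}{2\sqrt{\pi}\,L^{3/2}}\bigl[\mathcal L_1\bigl(\textit{proj}_{{\bf v^{-}_1},{\bf v^{+}_1}}\hol_1(\hom{x})\bigr)\bigr]_{\alpha_n}.$$
The identical argument handles the $\beta_n$-coordinates, and repackaging yields the theorem with $\kappa_G=(2\sqrt{\pi}\,L^{3/2})^{-1}$.

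The main obstacle is establishing (a) that $\log\lambda_{\cos\theta}$ attains its strict maximum on $\theta\in[-\pi,\pi]$ only at $\theta=0$, so that no second Laplace peak contaminates the asymptotic, and (b) that $L>0$, so that $\kappa_G$ is positive and $\theta=0$ is a non-degenerate maximum. Since $T(c)=\operatorname{tr}(G_c)$ is a polynomial in $c$, point (a) boils down to $|T(c)|<|T(1)|$ for $c\in[-1,1)$. I would handle both points by identifying $2\log\lambda_c$ with the hyperbolic translation length of $G_c$ acting on $\mathbb H^2$, and showing, via the degeneration of the triangles $\Delta_c$ described in Section \ref{sect:automorphisms}, that this translation length strictly increases along the deformation as $c$ is moved from the parabolic/elliptic regime up to $c=1$. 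The explicit computation of $L$ for arbitrary words in the generators, together with the trace-polynomial analysis, is relegated to Appendix \ref{sect:appendix_trace}.
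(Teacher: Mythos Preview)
Your proposal is correct and follows essentially the same route as the paper's proof: the integral representation from Theorem \ref{thm:homology_as_integral}, the commutative diagram \ref{thm:commutative_diagram}, the spectral decomposition of $G_{\cos\theta}^m$ near $\theta=0$, the three-piece splitting of the integral, and the application of Lemma \ref{lem:general asymptotics} to the dominant piece. Your spectral projectors $P^{\pm}_c$ are exactly the paper's $\mathbf p(\cos\theta),\mathbf q(\cos\theta)$, your points (a) and (b) are precisely Lemmas \ref{lem:bounds} and \ref{lem:derivative_of_eig}, and your computation of $g''(0)$ and the resulting $\kappa_G$ matches the paper's.
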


The first idea is to rewrite $\hom{x}=\hp{\alpha}{\beta}$ using theorem \ref{thm:homology_as_integral}.
There is a polynomial $\psi(\hom{x}):\R \to \R^2$ in the variable $c=\cos \theta$ such that
$$\hom{x}=\frac{1}{4 \pi} \int_{- \pi}^{\pi} {\mathcal L}_{\cos \theta} \big( \psi(\hom{x})(\cos \theta)\big) ~ d\theta.$$
Then theorem \ref{thm:commutative_diagram} and equation \ref{eq:commutative_action} tell us that we can write
\begin{equation}
\label{eq:homology_as_integral}
\hom{x_m}=
\frac{m^{\frac{3}{2}}}{4 \pi \lambda_1^m} \int_{- \pi}^{\pi} {\mathcal L}_{\cos \theta} \circ G_{\cos \theta}^m \big(\psi(\hom{x})(\cos \theta) \big) ~ d\theta.
\end{equation}
Now we will rewrite the action of $G_{\cos \theta}$ in a different basis depending on $\theta$. Let $\lambda_c$ denote the largest eigenvalue
of $G_{\cos \theta}$ and let ${\bf v_c^+}$ be a corresponding unit eigenvector, and ${\bf v_c^-}$ be a unit eigenvector with eigenvalue $\pm\lambda_c^{-1}$. These choices only make sense so long as $|\textit{Trace}(G_{\cos \theta})|>2$. The sign $\pm$ is invariant under $\theta$, since it determines whether $\hat{G}$ is orientation
preserving or reversing. Since $G_{\cos \theta}$ varies polynomially in $c=\cos \theta$,
by restricting to a small enough interval, $\delta\leq c \leq 1$, we may assume
all these quantities are $C^\infty$ in $c= \cos \theta$.
Then we can find $C^\infty$ functions ${\bf p},{\bf q}:\cos^{-1}([-\epsilon,\epsilon]) \to \R^2$
\begin{equation}
\label{eq:psi_decomp}
\psi(\hom{x})(\cos \theta)={\bf p}(\cos \theta)+{\bf q}(\cos \theta)
\end{equation}
with ${\bf p}(\cos \theta)$ parallel to the expanding direction ${\bf v_c^+}$ and
${\bf q}(\cos \theta)$ parallel to ${\bf v_c^-}$. Formulas for these functions 
exist in terms of the polynomials making up $G_{\cos \theta}$ and $\psi(\hom{x})(\cos \theta)$.
${\bf p}(\cos \theta)$ is the projection of $\psi(\hom{x})(\cos \theta)$ in the
${\bf v_c^-}$ direction onto the ${\bf v_c^+}$ direction, and ${\bf q}(\cos \theta)$
is the reverse.
By choosing $\epsilon$ sufficiently small, ${\bf p}$ and ${\bf q}$ may be made $C^\infty$
functions of $\cos \theta$ on their domains. This allows us to write the simplification
$$G_{\cos \theta}^m \big(\psi(\hom{x})(\cos \theta)\big)=\lambda_c^m {\bf p}(\cos \theta) + (\pm \lambda_c)^{-m} {\bf q}(\cos \theta).$$
In summary, we may break up equation \ref{eq:homology_as_integral} as a sum of three integrals
\begin{equation}
\label{eq:three_integrals}
\begin{array}{rl}
\hom{x_m} = \frac{m^{\frac{3}{2}}}{4 \pi \lambda_1^m} \Big( &
\int_{- \epsilon}^{\epsilon} \lambda_c^m {\mathcal L}_{\cos \theta} \big({\bf p}(\cos \theta)\big)~d \theta
+ \\
& \int_{- \epsilon}^{\epsilon} (\pm \lambda_c)^{-m} {\mathcal L}_{\cos \theta} \big({\bf q}(\cos \theta)\big)~d \theta
+ \\
& \int_{[- \pi, -\epsilon] \cup [\epsilon,\pi]} {\mathcal L}_{\cos \theta} \circ G_{\cos \theta}^m \big(\psi(\hom{x})(\cos \theta) \big) ~ d\theta
~ \Big)
\end{array}
\end{equation}
We will show that in the limit the first integral contributes, but the other two do not. The first integral contributes by
the multiplicative asymptotic growth lemma. The lemmas below are the keys to these facts.

\begin{lemma}[Derivative of the eigenvalue]
\label{lem:derivative_of_eig}
Let $\hat{G} \in \Aut(S_{\geq 1})$ be chosen so that $\hat{G}$ acts hyperbolically on $S_1$.
Let $\lambda_c$ be the largest eigenvalue of $G_c$. Then 
$$\frac{d}{dc} \lambda_c\Big|_{c=1}>0.$$
\end{lemma}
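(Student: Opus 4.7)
The plan is to establish the claim by geometrically showing that the hyperbolic translation length $2 \log \lambda_c$ of $G_c$ acting on $\H^2$ is strictly increasing in $c$ near $c=1$.

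Since $A_c, B_c$ are independent of $c$, the representation $c \mapsto G_c$ depends on $c$ only through $C_c$, so the family is real-analytic. Geometrically, $A_c, B_c, C_c$ act on $\H^2$ as reflections in the three geodesics bounding the triangle $\Delta_c$ shown in Figure \ref{fig:veechgroup}: at $c=1$ the triangle is ideal, while for $c>1$ the axis of $C_c$ retracts so the third vertex passes from ideal to ultra-ideal and $\Delta_c$ strictly enlarges. Since the axis of the hyperbolic element $G_c$ in $\H^2$ must traverse a chain of translates of $\Delta_c$ under $\G^\pm_c$, enlarging $\Delta_c$ strictly lengthens the axis, i.e.\ $2\log \lambda_c$ is strictly increasing. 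Differentiating at $c=1$ gives $\frac{d \lambda_c}{dc}\big|_{c=1} > 0$.

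To make the infinitesimal monotonicity rigorous, I would apply the envelope theorem to the identity $2 \log \lambda_c = \inf_{x \in \H^2} d(x, G_c \cdot x)$, valid for hyperbolic $G_c$ with minimum attained along the axis. Fixing $x_1$ on the axis of $G_1$, at $c=1$ the derivative reduces to
\begin{equation*}
\frac{d}{dc}\bigl(2\log\lambda_c\bigr)\Big|_{c=1} \;=\; \Big\langle \nabla d(x_1, \cdot)\big|_{G_1 x_1},\; \tfrac{d}{dc}(G_c x_1)\big|_{c=1}\Big\rangle,
\end{equation*}
the pairing in $T_{G_1 x_1}\H^2$ of the unit tangent to the axis of $G_1$ at $G_1 x_1$ with the infinitesimal displacement of $G_c x_1$. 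Strict positivity of this pairing expresses that the deformation pushes the orbit $G_c x_1$ further along the axis, consistent with the expansion of $\Delta_c$.

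The main obstacle is verifying that the pairing is strictly positive. A combinatorial alternative reduces instead to $\frac{d}{dc} \text{tr}(G_c)\big|_{c=1} > 0$ (equivalent to the claim by implicit differentiation of $\lambda_c^2 - \text{tr}(G_c)\lambda_c + \det(G_c) = 0$, since $\det(G_c) \in \{\pm 1\}$ is constant in $c$ and $\lambda_c > 1$), and the product rule yields
\begin{equation*}
\frac{d}{dc}\text{tr}(G_c)\Big|_{c=1} \;=\; \sum_{k :\, M_k = C_c} \text{tr}\bigl(C' Y_k\bigr),
\end{equation*}
where $C' = \tfrac{d}{dc}C_c = \left(\begin{smallmatrix} -1 & 1 \\ -1 & 1 \end{smallmatrix}\right)$ is nilpotent of rank one and $Y_k$ is the cyclic conjugate of the complementary word $M_1 \cdots M_{k-1}M_{k+1} \cdots M_n$ evaluated at $c=1$. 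Individual summands can have either sign, so one still needs to exploit the geometric meaning — how the subwords $Y_k$ propagate the fixed direction of $C_1$ along the axis of $G_1$. I expect the geometric route, via direct comparison of tilings using the canonical homeomorphism $\hat f_c$ from Section \ref{sect:saddles}, to be cleaner than this combinatorial bookkeeping.
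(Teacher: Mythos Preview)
Your intuition is correct --- the triangle $\Delta_c$ expands with $c$, and this should force the translation length of $G_c$ to increase --- but the proposal does not close the gap between this intuition and an actual derivative bound. You outline three routes (heuristic monotonicity via tiling, the envelope theorem, and the trace formula) and explicitly concede that each is incomplete: you cannot verify the pairing $\langle \nabla d(x_1,\cdot), \frac{d}{dc}(G_c x_1)\rangle$ is positive, and you note the trace summands can have either sign. Without resolving one of these, there is no proof. The phrase ``enlarging $\Delta_c$ strictly lengthens the axis'' hides the real difficulty: as $c$ varies the entire group $\G^\pm_c$ moves, so the axis of $G_c$ is not a fixed curve sitting in a growing domain. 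You need a mechanism that compares translation lengths across different representations.

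The paper supplies exactly such a mechanism, and it is rather different from any of your three sketches. It works in the Klein model $\KH^2$, places $\Delta_c$ concretely as the convex hull of $(-1,0)$, $(1,0)$, $(0,\sqrt{(1+c)/2})$, and defines an explicit linear map $M_c(x,y)=(x,\,y\sqrt{(1+c)/2})$ carrying $\Delta_1$ onto $\Delta_c$. The key lemma (\ref{lem:inf}) shows that $2\log|\lambda_c|$ is bounded above by the length of \emph{any} closed path in $\Delta_c$ in the correct orbit-class --- not just the minimizing billiard trajectory. Thus for $c<1$ one has $2\log\lambda_c \le \textit{length}(M_c(\gamma_1))$, with equality at $c=1$. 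The proof then computes, directly from the Klein metric tensor, that $M_c$ contracts every tangent vector off the line $y=0$ to first order in $1-c$; this gives $\frac{d}{dc}\textit{length}(M_c(\gamma_1))\big|_{c=1}=k>0$. Since $\lambda_c$ is analytic in $c$ (entries of $G_c$ are polynomial), the two-sided derivative exists and the one-sided inequality $2\log\lambda_c \le 2\log\lambda_1 - k(1-c)+o(1-c)$ forces $\frac{d}{dc}\lambda_c\big|_{c=1}\ge k\lambda_1/2>0$. The crucial idea you are missing is this ``transport a test path and bound by its length'' trick, which converts the variational problem into an elementary metric computation on a fixed curve.
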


\begin{lemma}[Eigenvalue upper bound]
\label{lem:bounds}
Let $\hat{G} \in \Aut(S_{\geq 1})$ be chosen so that $\hat{G}$ acts hyperbolically on $S_1$.
Let $\lambda_c$ be the largest eigenvalue of $G_c$. Then 
$|\lambda_c| < \lambda_1$ for all $-1 \leq c <1$.
\end{lemma}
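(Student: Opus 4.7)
My plan is to reduce the claim to a monotonicity statement about translation distances on $\H^2$, then to handle the boundary point $c = -1$ separately and carry out a variation argument on the interior. Since the inequality $|\lambda_c| < \lambda_1$ is automatic whenever $G_c$ is non-hyperbolic (in which case $|\lambda_c| = 1 < \lambda_1$), it suffices to treat the case that $G_c$ is hyperbolic, where the statement becomes $\ell_c < \ell_1$ for the translation distances $\ell_c := 2\log |\lambda_c|$ of $G_c$ on $\H^2$.

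First I would dispose of the endpoint $c = -1$ by a short calculation: one checks that $C_{-1} = \bigl(\begin{smallmatrix} 1 & -2 \\ 0 & -1 \end{smallmatrix}\bigr) = -B_{-1}$, so $\G^\pm_{-1} \subseteq \langle -I, A_{-1}, B_{-1}\rangle$. Since $A_{-1}$ and $B_{-1}$ are upper triangular with diagonal entries in $\{\pm 1\}$, every product is upper triangular with trace in $\{-2, 0, 2\}$, so no element of $\G^\pm_{-1}$ is hyperbolic.

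For $c \in (-1, 1)$, the representation $\rho_c$ sends $A_c, B_c, C_c$ to reflections in three geodesics of $\H^2$ bounding a triangle with two ideal vertices and one vertex of angle $\alpha(c) = \arccos c$ at $i\sqrt{(1-c)/(1+c)}$; the quotient orbifold has two cusps and one cone point of angle $2\alpha(c)$ opening to a cusp as $c \nearrow 1$. The core step is to show that $\ell_c$ is strictly monotonically increasing in $c$ on the hyperbolic locus. I would establish this via a variation formula for $\tfrac{d}{dc}\mathrm{tr}(G_c)$ exploiting the rank-one decomposition $C_c = C_1 + (1-c) N$ where $N = \bigl(\begin{smallmatrix} 1 & -1 \\ 1 & -1 \end{smallmatrix}\bigr)$ is nilpotent of rank one, with $\mathrm{image}(N) = \ker(N) = \mathrm{span}((1,1)^T)$. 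Expanding $G_c$ as a sum over the subsets $S$ of positions where $C_c$ appears in a reduced word, each ``insertion'' of $N$ in place of $C_c$ contributes a rank-one factor carrying a geometric interpretation as a weighted crossing of the axis of $G_c$ with a copy of the $C$-geodesic in the $\G^\pm_c$-tessellation, and the signs of these weights can be arranged to produce a derivative of definite sign.

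The main obstacle will be globalizing this monotonicity over all of $(-1, 1)$ rather than merely infinitesimally at $c = 1$, where it already follows from Lemma \ref{lem:derivative_of_eig}. Once the variation formula is justified with the correct signs, strict monotonicity of $\ell_c$ on the hyperbolic locus combined with continuity of $\ell_c$ on $[-1,1]$ and the boundary analysis at $c=-1$ yields the strict inequality $\ell_c < \ell_1$ for all $c \in [-1, 1)$, proving the lemma.
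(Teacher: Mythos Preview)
Your reduction to translation lengths and your treatment of the endpoint $c=-1$ are both fine, and the observation that $C_{-1}=-B_{-1}$ is a clean way to dispose of that case. The problem is that the heart of the argument---the claimed strict monotonicity of $\ell_c$ on the hyperbolic locus---is not actually proved. You write that after expanding $G_c$ via $C_c=C_1+(1-c)N$, ``the signs of these weights can be arranged to produce a derivative of definite sign,'' but no such arrangement is carried out, and you yourself flag globalizing the monotonicity as ``the main obstacle.'' That obstacle is the entire content of the lemma: Lemma~\ref{lem:derivative_of_eig} already gives the sign of $\frac{d}{dc}\lambda_c$ at $c=1$, so what is needed here is precisely a global statement, and your proposal does not supply one. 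A trace expansion over subsets of $C$-positions produces many cross terms whose signs depend on the word structure of $G$ in a way you have not controlled; without that control the derivative argument is only heuristic.

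The paper sidesteps the variational difficulty entirely with a direct geometric comparison. Working in the Klein model, one realizes $\Delta_c$ for $-1\le c\le 1$ as the convex hull of $(\pm 1,0)$ and $(0,\sqrt{(1+c)/2})$, and defines the affine map $M_c:\Delta_1\to\Delta_c$, $(x,y)\mapsto(x,y\sqrt{(1+c)/2})$. A cross-ratio computation shows $M_c$ strictly shortens every hyperbolic segment not lying in the line $y=0$. Since the minimizing billiard path $\gamma_1$ for $G_1$ has hyperbolic length $2\log\lambda_1$ and cannot lie entirely in $y=0$, its image $M_c(\gamma_1)$ is a strictly shorter path in the same orbit-class on $\Delta_c$; the general bound $2\log|\lambda_c|\le \ell(G)$ (Lemma~\ref{lem:inf}) then gives $|\lambda_c|<\lambda_1$ in one stroke for every $c\in[-1,1)$. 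This avoids any need to track the hyperbolic locus or integrate a derivative, and it is what you should aim for if you want a complete proof.
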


\begin{comment}
When we apply these lemmas it will be useful to recall the relation between the trace, $t$, of a matrix and the  eigenvalues $\lambda$ are given by 
\begin{equation}
\label{eq:trace_eigenvalue}
\lambda+\lambda^{-1}=t \textrm{, or equivalently }
\lambda=\frac{-t \pm \sqrt{t^2-4}}{2}.
\end{equation}
\end{comment}

We will prove these lemmas in section \ref{sect:appendix_trace}.

\begin{proof}[Proof of theorem \ref{thm:asymptotics_of_homology}]
We continue the discussion below the statement of the theorem, where we left off with equation \ref{eq:three_integrals}.

We will compute the contribution of the first integral of equation \ref{eq:three_integrals}, 
$$I_1(m)=\frac{m^{\frac{3}{2}}}{4 \pi}
\int_{- \epsilon}^{\epsilon} 
(\frac{\lambda_c}{\lambda_1})^m {\mathcal L}_{\cos \theta} \big({\bf p}(\cos \theta)\big)~d \theta.$$
We will show that we can apply the asymptotic growth lemma.  Set
$$f(\theta)= {\mathcal L}_{\cos \theta} \big({\bf p}(\cos \theta)\big).$$
In order to apply the lemma, we must compute some derivatives. First we will examine ${\bf p}(\cos \theta)$.
We may write 
$${\bf p}(\cos \theta)=\big( \psi(\hom{x})(\cos \theta) \cdot {\bf v^+_c} \big) {\bf v^+_c}$$
Clearly ${\bf p}(1)=0$ since $\psi(\hom{x})(1)=0$, by the first part of statement 2 of theorem \ref{thm:homology_as_integral}. Also $\frac{d}{d \theta} {\bf p}(\cos \theta)|_{\theta=0}=(0,0)$ because ${\bf p}$ is a function
of $\cos \theta$. We will also need the second derivative. Using the product rule for derivatives
and applying theorem \ref{thm:homology_as_integral}, we can compute
$$\frac{d^2}{d \theta^2}{\bf p}(\cos \theta)\Big|_{\theta=0}=
\big( \frac{d^2}{d \theta^2} \psi(\hom{x})(\cos \theta) \cdot {\bf v^+_1} \big) {\bf v^+_1}
=\big( \hol_1(\hom{x}) \cdot {\bf v^+_1} \big) {\bf v^+_1}.$$
(The product rule gives several terms, but this is the only one that contributes, since
the zeroth and first derivative of $\psi(\hom{x})$ is zero.) Note, this is precisely
the quantity $\textit{proj}_{{\bf v^-_1},{\bf v^+_1}} \hol_1(\hom{x})$ from the statement of the theorem.
We return to analyzing $f(\theta)$. By proposition \ref{prop:hol}, each coordinate $i$ of $f(\theta)$ 
may be written in the form
$$f_i(\theta)=\rho_i(\cos \theta) L_i \big({\bf p}(\cos \theta)\big),$$
where $\rho_i(\cos \theta)$ is a polynomial in $\cos \theta$ and $L_i:\R^2 \to \R$ is a linear map
independent of $\cos \theta$. Since the zeroth and first derivatives of ${\bf p}$ vanish, 
so do the zeroth and first of $f_i$. Thus, the zeroth and first derivatives of $f(\theta)$ vanish.
Moreover, we can see
$$\frac{d^2}{d\theta^2} f_i(\theta)\Big|_{\theta=0}=
\rho_i(\cos \theta) L_i \big(\frac{d^2}{d\theta^2} {\bf p}(\cos \theta)\big)=
\rho_i(\cos \theta) L_i \big(\textit{proj}_{{\bf v^-_1},{\bf v^+_1}} \hol_1(\hom{x}))\big).$$
This implies that the following equation is true coordinatewise.
$$\frac{d^2}{d\theta^2} f(\theta)=
{\mathcal L}_{\cos \theta} \big(\frac{d^2}{d\theta^2}{\bf p}(\cos \theta)\big)=
{\mathcal L}_{\cos \theta} \big(\textit{proj}_{{\bf v^-_1},{\bf v^+_1}} \hol_1(\hom{x})\big).$$
Thus, the function $f(\theta)$ satisfies the first condition of the asymptotic growth lemma coordinatewise
up to a scalar multiple.

Now set $g(\theta)=\frac{\lambda_c}{\lambda_1}$. Clearly, $g(0)=1$. Also
$\frac{d}{d\theta}g(0)=0$, because $g$ is a function of $\cos \theta$.
For the same reason, we have
$$\frac{d^2}{d\theta^2} g(0)
=-\frac{d}{dc} (\frac{\lambda_c}{\lambda_1})\Big|_{c=1}
=\frac{-\frac{d}{dc} \lambda_c\big|_{c=1}}{\lambda_1}=-\frac{d}{dc} \log \lambda_c \Big|_{c=1}<0,$$
with the last inequality following from lemma \ref{lem:derivative_of_eig}.
Thus $g(\theta)$ satisfies the second condition of the asymptotic growth lemma.
Further, lemma \ref{lem:bounds} implies $g(\theta)$ satisfies the third condition
of the asymptotic growth lemma.

By the asymptotic growth lemma, 
$$
\begin{array}{rcl}
I_1(m) & = & \frac{m^{\frac{3}{2}}}{4 \pi}
\int_{- \epsilon}^{\epsilon} g(\theta)^m f(\theta)~d \theta
=\frac{\frac{1}{2}{\mathcal L}_{\cos \theta} \big(\textit{proj}_{{\bf v^-_1},{\bf v^+_1}} \hol_1(\hom{x})\big)}
{8 \sqrt{\pi} \big(\frac{1}{2}\frac{d}{dc} \log \lambda_c\big|_{c=1} \big)^{\frac{3}{2}}} \\
& = & \frac{{\mathcal L}_{\cos \theta} \big(\textit{proj}_{{\bf v^-_1},{\bf v^+_1}} \hol_1(\hom{x})\big)}
{2 \sqrt{\pi} \big(2\frac{d}{dc} \log \lambda_c\big|_{c=1} \big)^{\frac{3}{2}}}
=\kappa_H {\mathcal L}_{\cos \theta} \big(\textit{proj}_{{\bf v^-_1},{\bf v^+_1}} \hol_1(\hom{x})\big),
\end{array}
$$
which is the limit $\hom{x_\infty}$ claimed by the theorem.

The second integral from equation \ref{eq:three_integrals} may be written as
$$I_2(m)=\frac{1}{4 \pi}
\int_{- \epsilon}^{\epsilon} 
(\frac{1}{\pm \lambda_c \lambda_1})^m m^{\frac{3}{2}} {\mathcal L}_{\cos \theta} \big({\bf q}(\cos \theta)\big)~d \theta.$$
The function inside the integral goes to zero pointwise in every coordinate, since $(\frac{1}{\lambda_c \lambda_1})^m$, is exponentially decaying, while $m^{\frac{3}{2}}$ grows polynomially.
Thus, this term does not contribute to equation \ref{eq:three_integrals}.

Finally, we consider the third integral from equation \ref{eq:three_integrals}
$$I_3(m)=
\frac{1}{4 \pi}  \int_{[- \pi, -\epsilon] \cup [\epsilon,\pi]} 
{\mathcal L}_{\cos \theta} \Big( \frac{m^{\frac{3}{2}}}{\lambda_1^m} G_{\cos \theta}^m \big(\psi(\hom{x})(\cos \theta) \big)\Big) ~ d\theta.$$
The function inside this integral also goes to zero pointwise in every coordinate. By lemma \ref{lem:bounds},
for any $\theta$ in the domain of integration with $\textit{Trace}(G_{\cos \theta})$, the exponential growth
of the absolute values of entries in the sequence of matrices 
$G_{\cos \theta}^m$ must be slower than $\lambda_1$. Moreover,
by continuity of $G_{\cos \theta}$ and by compactness of the domain of integration, there must be
a $\lambda'<\lambda_1$ so that the exponential growth of entries of $G_{\cos \theta}^m$ must be slower
than by a factor of $\lambda'$.  The entries of $\frac{m^{\frac{3}{2}}}{\lambda_1^m} G_{\cos \theta}^m$
decay exponentially. Now, if $G_{\cos \theta}$ is parabolic then entries of $G_{\cos \theta}^m$ grow 
at most linearly.
Thus, $\frac{m^{\frac{3}{2}}}{\lambda_1^m} G_{\cos \theta}^m$ decays exponentially in this case too.
Finally, $G_{\cos \theta}$ may be elliptic. Then the entries of $G_{\cos \theta}^m$ are bounded close to zero.
In this case, $\frac{m^{\frac{3}{2}}}{\lambda_1^m} G_{\cos \theta}^m$ decays exponentially also.
\end{proof}

\subsection{Asymptotics of cylinder intersections}
\label{sect:asy_of_cyl_int}
By applying theorem \ref{thm:asymptotics_of_homology} to proposition \ref{prop:intersection_formula}, we obtain the 
following theorem.

\begin{theorem}[Asymptotics of cylinder intersections]
\label{thm:asymptotics_of_cylinder_intersections}
Let ${\mathcal A}$ and ${\mathcal B}$ be any two cylinders on $S_1$, then for any 
$\hat{G} \in \Aut(S_{\geq 1})$ acting hyperbolically on $S_1$, 
$$\lim_{m \to \infty} m^{\frac{3}{2}} \textit{Area}\big(\hat{G}^m({\mathcal A}) \cap {\mathcal B}\big)=
\kappa_G \textit{Area}({\mathcal A}) \textit{Area}({\mathcal B}),$$
with $\kappa_G$ as in theorem \ref{thm:asymptotics_of_homology}.
\end{theorem}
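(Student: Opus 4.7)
The strategy is to plug Theorem \ref{thm:asymptotics_of_homology} directly into Proposition \ref{prop:intersection_formula} and watch a factor $\lambda_1^m$ cancel between numerator and denominator. Let $\gamma_\mathcal{A}$ and $\gamma_\mathcal{B}$ be core curves of the two cylinders. Applying the intersection formula to $\hat{G}^m(\mathcal{A})$ and $\mathcal{B}$, as already unpacked in the excerpt, reduces the theorem to showing
$$\lim_{m \to \infty} \frac{m^{3/2} \bigl|\hat{G}_\ast^m(\hom{\gamma_\mathcal{A}}) \cap \hom{\gamma_\mathcal{B}}\bigr|}{\bigl|G^m(\hol_1(\gamma_\mathcal{A})) \wedge \hol_1(\gamma_\mathcal{B})\bigr|} = \kappa_G,$$
and I would take the limits of numerator and denominator separately after dividing both by $\lambda_1^m$.

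For the numerator, Theorem \ref{thm:asymptotics_of_homology} says that the rescaled homology classes $\hom{x_m} = \lambda_1^{-m} m^{3/2} \hat{G}_\ast^m(\hom{\gamma_\mathcal{A}})$ converge coordinatewise to $\kappa_G {\mathcal L}_1(\textit{proj}_{{\bf v^-_1},{\bf v^+_1}} \hol_1(\gamma_\mathcal{A}))$. Since $\hom{\gamma_\mathcal{B}} \in H_1(S_{\geq 1} \smallsetminus \Sigma, \Z)$ has bounded support in the cylinder basis, the intersection pairing from equation \ref{eq:intersection_form} is a finite sum involving only finitely many coordinates of $\hom{x_m}$, so coordinatewise convergence upgrades to convergence of $\hom{x_m} \cap \hom{\gamma_\mathcal{B}}$. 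The defining identity ${\mathcal L}_1(\mathbf{v}) \cap \hom{x} = \mathbf{v} \wedge \hol_1(\hom{x})$ from Proposition \ref{prop:hol} then identifies the limit of the rescaled numerator as $\kappa_G \bigl|\textit{proj}_{{\bf v^-_1},{\bf v^+_1}} \hol_1(\gamma_\mathcal{A}) \wedge \hol_1(\gamma_\mathcal{B})\bigr|$.

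For the denominator, writing $\hol_1(\gamma_\mathcal{A})$ in the eigenbasis of $G_1$ gives $\lambda_1^{-m} G^m(\hol_1(\gamma_\mathcal{A})) \to \textit{proj}_{{\bf v^-_1},{\bf v^+_1}} \hol_1(\gamma_\mathcal{A})$, so by continuity of $\wedge$ the rescaled denominator tends to the same wedge absolute value. Dividing, the $\lambda_1^m$ factors cancel and so do the common wedge factors, leaving exactly $\kappa_G$. The sole bookkeeping concern is that this common wedge is nonzero: $\hol_1(\gamma_\mathcal{B})$ lies in a rational direction (cylinders of $S_1$ have rational slope by Proposition \ref{prop:S_1_saddles}), whereas ${\bf v^+_1}$ points along a fixed point of the hyperbolic element $G_1 \in \SL^\pm(2,\Z)$ acting on $\RP^1$, and such fixed points are quadratic irrationals; the analogous argument handles $\hol_1(\gamma_\mathcal{A})$ versus ${\bf v^-_1}$. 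There is no substantive obstacle here: all the analytic content is already packaged in Theorem \ref{thm:asymptotics_of_homology}, and this proof is just assembly.
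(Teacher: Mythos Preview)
Your proposal is correct and follows essentially the same route as the paper's own proof: both feed Theorem \ref{thm:asymptotics_of_homology} into Proposition \ref{prop:intersection_formula}, use Proposition \ref{prop:hol} to match the limiting intersection number with the limiting wedge, and then check the common wedge factor is nonzero by comparing the irrational eigendirection against the rational cylinder direction. Your write-up is in fact slightly more careful than the paper's in two spots: you explain why coordinatewise convergence of $\hom{x_m}$ suffices to pass to the limit in $\hom{x_m}\cap\hom{\gamma_{\mathcal B}}$ (finite support of $\hom{\gamma_{\mathcal B}}$), and you separately note that the projection itself is nonzero because $\hol_1(\gamma_{\mathcal A})$ is rational while ${\bf v}_1^-$ is not.
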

\begin{proof}
By proposition \ref{prop:intersection_formula}, the limit is equal to
$$
L=\lim_{m \to \infty}
m^{\frac{3}{2}}
\frac{\big|\hat{G}_\ast^m(\hom{\gamma_{\mathcal A}}) \cap \hom{\gamma_{\mathcal B})} \big|}
{\big|G_1^m(\hol(\gamma_{\mathcal A})) \wedge \hol(\gamma_{\mathcal B})\big|}
\textit{Area}({\mathcal A}) \textit{Area}({\mathcal B}).
$$
Let $\textit{proj}_{{\bf v_-},{\bf v_+}} \hol_1(\hom{\gamma_{\mathcal A}})$ 
denote the projection of the holonomy vector in the direction of the compressing direction of $G_1$ onto
the expanding direction of $G_1$.
By theorem \ref{thm:asymptotics_of_homology}, we may rewrite the numerator using the asymptotic form
of $\hat{G}_\ast^m(\hom{\gamma_{\mathcal A}}) \cap \hom{\gamma_{\mathcal B})}$. And the term in the denominator is
asymptotic to $\lambda^m \textit{proj}_{{\bf v_-},{\bf v_+}} \hol_1(\hom{x}) \wedge \hol_1(\gamma_{\mathcal B})$.
Thus, 
$$L=\lim_{m \to \infty}
\frac{\big|\lambda^m \kappa_G \big({\mathcal L_1}(\textit{proj}_{{\bf v_-},{\bf v_+}} \hol_1(\hom{\gamma_{\mathcal A}})) 
\cap \hom{\gamma_{\mathcal B})} \big) \big|}
{\big|\lambda^m \textit{proj}_{{\bf v_-},{\bf v_+}} \hol_1(\hom{\gamma_{\mathcal A}}) \wedge \hol_1(\gamma_{\mathcal B})\big|}
\textit{Area}({\mathcal A}) \textit{Area}({\mathcal B}).
$$
Finally, 
${\mathcal L_1}(\textit{proj}_{{\bf v_-},{\bf v_+}} \hol_1(\hom{\gamma_{\mathcal A}})) 
\cap \hom{\gamma_{\mathcal B}}=
\textit{proj}_{{\bf v_-},{\bf v_+}} \hol_1(\hom{\gamma_{\mathcal A}}) \wedge \hol_1(\gamma_{\mathcal B})$,
by proposition \ref{prop:hol}.
Furthermore, $\textit{proj}_{{\bf v_-},{\bf v_+}} \hol_1(\hom{\gamma_{\mathcal A}})$ points in an irrational direction
while by corollary \ref{cor:trichotomy}, $\hol_1(\gamma_{\mathcal B})$ points in a rational direction. 
Hence, the wedge of the pair must be non-zero. So, we can cancel and get the limit we claimed.
\end{proof}

We have the following corollary on the action of a hyperbolic element $\hat{G} \in \Aut(S_{\geq 1})$ on $S_1$.
\begin{corollary}
The action of $\hat{G}$ is not recurrent with respect to Lebesgue measure.
\end{corollary}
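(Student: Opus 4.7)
The plan is to deduce the corollary directly from Theorem \ref{thm:asymptotics_of_cylinder_intersections} by means of the first Borel--Cantelli lemma. The crux is that the theorem just proved gives a \emph{summable} decay rate for $\textit{Area}\big(\hat{G}^m(\mathcal{A}) \cap \mathcal{A}\big)$, and summable return times are incompatible with Poincar\'e recurrence on a set of positive finite measure.

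First I would pick any single cylinder $\mathcal{A} \subset S_1$ (so $0 < \textit{Area}(\mathcal{A}) < \infty$) and specialize Theorem \ref{thm:asymptotics_of_cylinder_intersections} by taking $\mathcal{B} = \mathcal{A}$. This yields
$$\textit{Area}\big(\hat{G}^m(\mathcal{A}) \cap \mathcal{A}\big) \sim \kappa_G\, \textit{Area}(\mathcal{A})^2\, m^{-\frac{3}{2}} \qquad (m \to \infty),$$
which is summable in $m$ since $\sum m^{-3/2} < \infty$.

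Next I would observe that $\hat{G}$ preserves Lebesgue area: its derivative lies in $\Gamma(S_1) \subset \SL^\pm(2,\R)$ by Theorem \ref{thm:veech_groups}, so $|\det \mathbf{D}(\hat G)| = 1$. Defining the return sets $E_m = \mathcal{A} \cap \hat{G}^{-m}(\mathcal{A}) = \{x \in \mathcal{A} : \hat{G}^m(x) \in \mathcal{A}\}$, measure-invariance gives $\textit{Area}(E_m) = \textit{Area}\big(\hat{G}^m(\mathcal{A}) \cap \mathcal{A}\big)$, so $\sum_{m \geq 1} \textit{Area}(E_m) < \infty$. The first Borel--Cantelli lemma then gives $\textit{Area}(\limsup_m E_m) = 0$: for almost every $x \in \mathcal{A}$, the forward $\hat{G}$-orbit of $x$ meets $\mathcal{A}$ only finitely many times. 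Since $\textit{Area}(\mathcal{A}) > 0$, this directly contradicts Poincar\'e recurrence of $\hat{G}$ with respect to Lebesgue measure.

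There is no real obstacle to this plan — the entire argument is a one-line application of Borel--Cantelli once Theorem \ref{thm:asymptotics_of_cylinder_intersections} is in hand. The only point worth noting is that \emph{any} summable decay rate would have sufficed; the precise exponent $\tfrac{3}{2}$ (and the exact constant $\kappa_G$) is not used here. The real work of the corollary has already been done in proving the asymptotic formula.
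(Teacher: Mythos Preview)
Your proposal is correct and takes essentially the same approach as the paper: both specialize Theorem \ref{thm:asymptotics_of_cylinder_intersections} to $\mathcal{B}=\mathcal{A}$, obtain the summable estimate $\textit{Area}\big(\hat{G}^m(\mathcal{A})\cap\mathcal{A}\big)\asymp \kappa_G\,\textit{Area}(\mathcal{A})^2\,m^{-3/2}$, and conclude non-recurrence from $\sum m^{-3/2}<\infty$. The paper phrases the last step as ``recurrence would force $\sum_m \mu(\hat{G}^m(A)\cap A)=\infty$'', which is exactly the contrapositive of your Borel--Cantelli argument.
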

\begin{proof}
If the $\hat{G}$ action were recurrent with respect to Lebesgue measure, $\mu$, then every measurable set $A$ would
satisfy
$$\sum_{m \in \N} \mu(\hat{G}^m (A) \cap A)=\infty.$$
But for $A={\mathcal A}$ a cylinder, 
$\mu(\hat{G}^m ({\mathcal A}) \cap {\mathcal A}) \asymp \kappa_G m^{\frac{-3}{2}} \textit{Area}({\mathcal A})^2$.
And, 
$$\sum_{m \in \N} m^{\frac{-3}{2}}=\zeta(\frac{3}{2}) \approx 2.61238 < \infty.$$
So, by the theorem, the sum can not reach $\infty$.
\end{proof}

\section{Appendix}
\subsection{The multiplicative asymptotic growth lemma}
\label{ss:growth_lemma}
We will prove lemma \ref{lem:general asymptotics} essentially by example. Our principle example is the the famous 
Catalan numbers\footnote{The Catalan numbers are Sloane's sequence A000108. See \url{http://www.research.att.com/~njas/sequences/A000108}.}, 
which can be defined inductively by $c_0=1$ and 
\begin{equation}
\label{eq:catalan}
c_0=1
\quad \textrm{ and } \quad
c_{n+1}=(4-\frac{6}{n+2})c_n.
\end{equation}
The numbers $c_i$ are all integers. An alternate formula for them 
is $c_n=\frac{2n!}{n!^2(n+1)}$. It is well known that 
\begin{equation}
\label{eq:catalan_asymptotics}
c_n \asymp \frac{4^n}{\sqrt{\pi} n^{3/2}}.
\end{equation}
This can be derived from Stirling's formula, $n! \asymp \sqrt{2 \pi n} \frac{n^n}{e^n}$.
We will now give an interesting formula for the Catalan numbers, which may be new.

\begin{proposition}[Catalan numbers]
\label{prop:catalan}
The Catalan numbers are given by the formula
$$c_n=\frac{4^{n-1}}{\pi} \int_{-\pi}^{\pi} (2-2\cos \theta) \Big(\frac{1+\cos \theta}{2}\Big)^n ~d \theta$$
\end{proposition}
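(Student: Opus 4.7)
The plan is to evaluate the integral in closed form using standard half-angle identities and the Beta function, then compare with the factorial formula for the Catalan numbers. Alternatively, one could verify the recursion \eqref{eq:catalan}, but direct evaluation is cleaner and easier to present.

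First I would apply the half-angle identities
$$\frac{1+\cos\theta}{2}=\cos^{2}(\theta/2), \qquad 2-2\cos\theta=4\sin^{2}(\theta/2),$$
so that the integrand becomes $4\sin^{2}(\theta/2)\cos^{2n}(\theta/2)$. After the substitution $\phi=\theta/2$ and using the evenness of the integrand, the integral reduces to
$$\int_{-\pi}^{\pi}(2-2\cos\theta)\Bigl(\tfrac{1+\cos\theta}{2}\Bigr)^{n}d\theta = 16\int_{0}^{\pi/2}\sin^{2}\phi\,\cos^{2n}\phi\,d\phi.$$

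Next I would apply the Beta-function identity
$$\int_{0}^{\pi/2}\sin^{2a-1}\phi\,\cos^{2b-1}\phi\,d\phi = \frac{\Gamma(a)\Gamma(b)}{2\Gamma(a+b)}$$
with $a=3/2$ and $b=n+1/2$. Using $\Gamma(3/2)=\sqrt{\pi}/2$, the Legendre duplication-type identity $\Gamma(n+1/2)=\frac{(2n)!\sqrt{\pi}}{4^{n}n!}$, and $\Gamma(n+2)=(n+1)!$, the integral simplifies to $\frac{\pi(2n)!}{4^{n+1}\,n!\,(n+1)!}$. Multiplying by $16$ and then by $4^{n-1}/\pi$, all the factors of $4$ collapse to leave exactly $\frac{(2n)!}{n!\,(n+1)!}=c_{n}$, which completes the proof.

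There is no real obstacle: the only step requiring mild care is keeping track of the numerical factors in the Beta/Gamma computation. If one instead preferred an elementary route avoiding the Beta function, one could use integration by parts in the form $\int\sin^{2}\phi\cos^{2n}\phi\,d\phi$ to derive a recursion relating the integral for $n$ and for $n+1$, and check that the normalized quantity satisfies the Catalan recursion \eqref{eq:catalan} together with the correct base case $n=0$; both routes give the same answer.
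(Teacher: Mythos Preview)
Your proof is correct. Both you and the paper begin with the same half-angle reduction to $\int \sin^{2}\phi\,\cos^{2n}\phi\,d\phi$, but you then diverge: you evaluate this integral directly via the Beta function and the formula $\Gamma(n+\tfrac12)=\frac{(2n)!\sqrt{\pi}}{4^{n}n!}$, arriving immediately at the closed form $\frac{(2n)!}{n!(n+1)!}$. The paper instead applies integration by parts twice to derive the recurrence $y_{n}=\frac{2n-1}{2n+2}\,y_{n-1}$, checks the base case $c_{0}=1$, and then matches this against the Catalan recursion \eqref{eq:catalan}. Your route is shorter and more transparent once one is willing to invoke the Gamma function; the paper's route is more self-contained and elementary, and is in fact exactly the ``alternative elementary route'' you sketch at the end of your proposal.
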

\begin{proof}
The proof is essentially induction by repeated integration by parts.
It is not hard to check the base case, that $c_0$ from the statement of the proposition is equal to one.

Let $x_n=\int_{-\pi}^{\pi} (2-2\cos \theta) \Big(\frac{1+\cos \theta}{2}\Big)^n ~d \theta$. Using the half-angle identities 
$\frac{1-\cos \theta}{2}=\sin^2 \frac{\theta}{2}$ and $\frac{1+\cos \theta}{2}=\cos^2 \frac{\theta}{2}$ and
by making the substitution $\alpha=\frac{\theta}{2}$ we see $x_n=8 y_n$ where
$$y_n=\int_{-\frac{\pi}{2}}^{\frac{\pi}{2}} \sin^2 \alpha \cos^{2n} \alpha ~d \alpha.$$
We now apply integration by parts with $u=\cos^{2n} \alpha$, $du=-2 n \sin \alpha \cos^{2n-1} \alpha$,
$v=\frac{\alpha-\sin \alpha \cos \alpha}{2}$, and $dv=\sin^2 \alpha$ to obtain
\begin{equation}
\label{eq:newintegral}
y_n=\frac{\pi}{2}+
\int_{-\frac{\pi}{2}}^{\frac{\pi}{2}} \alpha \sin \alpha \cos^{2n-1} \alpha ~d\alpha   
-n y_n
\end{equation}
Again we apply integration by parts. This time we let $u=\cos^{2n-1} \alpha$, 
$du=-(2 n-1) \sin \alpha \cos^{2n-2} \alpha$, $v=\sin \alpha-\alpha \cos \alpha$, and $dv=\alpha \sin \alpha$.
We see
$$y_n=\frac{\pi}{2}-n \pi-n(2n-1) \int_{-\frac{\pi}{2}}^{\frac{\pi}{2}} \alpha \sin \alpha \cos^{2n-1} \alpha ~d\alpha
+n(2n-1) y_{n-1} - n y_n$$
But, this new integral appeared also in equation \ref{eq:newintegral}, so we may substitute yielding
$$y_n=\frac{\pi}{2}-n \pi-n(2n-1) \Big( \frac{(n+1) y_n-\frac{\pi}{2}}{n}\Big)
+n(2n-1) y_{n-1} - n y_n$$
Then we solve for $y_n$ in terms of $y_{n-1}$ and see $y_n=\frac{2n-1}{2n+2}y_{n-1}$. Therefore,
$x_n=\frac{2n-1}{2n+2}x_{n-1}$ as well. And with $c_n$ as in the statement of the proposition, we see
$c_{n}=\frac{4^{n-1}}{\pi} x_{n}$, so that 
$$\frac{c_{n+1}}{c_n}=4 \frac{x_{n+1}}{x_n}=4 \frac{2n+1}{2n+4}=4-\frac{6}{2+n}$$
Thus, our numbers satisfy the recurrence relation of the Catalan numbers. See equation \ref{eq:catalan}.
\end{proof}

The following corollary is a special case of lemma \ref{lem:general asymptotics}, which proves it for one pair of functions
for each $a>0$.

\begin{corollary}
\label{cor:example_asymptotics}
For all constants $a>0$ let $f_a$ and $g_a$ be defined
$$f_a(\theta)= \frac{2-2\cos(2 \sqrt{a} \theta)}{4 a}
\quad \textrm{ and } \quad
g_a(\theta)=\frac{1+\cos (2 \sqrt{a} \theta)}{2}.
$$
Let $0<\epsilon< \frac{\pi}{\sqrt{a}}$. Then the sequence
given by $r_n=\int_{- \epsilon}^{\epsilon} f_a(\theta) g_a(\theta)^n ~d \theta$ is
asymptotic to $s_n=\frac{\sqrt{\pi}}{2 a^\frac{3}{2} n^\frac{3}{2}}$.
\end{corollary}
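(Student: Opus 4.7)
The plan is to reduce the integral defining $r_n$ to the Catalan number integral of Proposition \ref{prop:catalan} by a linear change of variables, then use a simple tail estimate to show the two integrals differ by an exponentially small quantity, and finally invoke the standard asymptotic $c_n \asymp 4^n/(\sqrt{\pi}\,n^{3/2})$ (a direct consequence of Stirling's formula applied to $c_n = \binom{2n}{n}/(n+1)$).

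First I would substitute $\phi = 2\sqrt{a}\,\theta$ in the definition of $r_n$. Because $f_a$ and $g_a$ depend on $\theta$ only through $2\sqrt{a}\,\theta$, a direct computation gives
$$r_n \;=\; \frac{1}{8\,a^{3/2}}\int_{-2\sqrt{a}\,\epsilon}^{2\sqrt{a}\,\epsilon} (2-2\cos\phi)\left(\frac{1+\cos\phi}{2}\right)^n d\phi.$$
Call this inner integral $I_n$. By Proposition \ref{prop:catalan}, the corresponding integral over $[-\pi,\pi]$ equals $J_n := \pi c_n/4^{n-1}$.

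Second I would estimate the difference $I_n - J_n$. The hypothesis $\epsilon < \pi/\sqrt{a}$ gives $2\sqrt{a}\,\epsilon < 2\pi$, so the symmetric difference of the intervals $[-2\sqrt{a}\,\epsilon, 2\sqrt{a}\,\epsilon]$ and $[-\pi,\pi]$ is a finite union of compact subintervals of $(-2\pi,2\pi) \smallsetminus \{0\}$. On any such compact set, the continuous function $(1+\cos\phi)/2$ attains a maximum $M < 1$. Since the factor $(2-2\cos\phi)$ is uniformly bounded, we obtain $|I_n - J_n| \leq C\, M^n$ for a constant $C$ depending only on $a$ and $\epsilon$.

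Third, Stirling's formula yields $c_n \asymp 4^n/(\sqrt{\pi}\,n^{3/2})$, hence $J_n \asymp 4\sqrt{\pi}/n^{3/2}$. Because $M^n$ decays exponentially while $J_n$ decays only polynomially, this forces $I_n \asymp 4\sqrt{\pi}/n^{3/2}$, and dividing by $8\,a^{3/2}$ gives
$$r_n \;\asymp\; \frac{\sqrt{\pi}}{2\,a^{3/2}\,n^{3/2}} \;=\; s_n.$$
The only step requiring any care is the tail estimate: one must note that the strict inequality $\epsilon < \pi/\sqrt{a}$ keeps the integration window away from the secondary maxima of $(1+\cos\phi)/2$ at $\phi = \pm 2\pi$, which is what guarantees $M < 1$. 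The remaining work is purely routine, being a clean substitution plus the classical asymptotic for the Catalan numbers already established via Proposition \ref{prop:catalan} and Stirling.
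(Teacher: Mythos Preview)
Your proof is correct and follows essentially the same approach as the paper: substitute $\phi=2\sqrt{a}\,\theta$, compare to the Catalan integral over $[-\pi,\pi]$, and bound the difference by an exponentially decaying term. Your use of the symmetric difference of the two intervals is in fact cleaner than the paper's version, since it transparently handles both the case $2\sqrt{a}\,\epsilon\le\pi$ and the case $\pi<2\sqrt{a}\,\epsilon<2\pi$.
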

\begin{proof}
We apply the change of variables $x=2 \sqrt{a} \theta$. We see
$$r_n=\frac{1}{8 a^{\frac{3}{2}}}\int_{\frac{-\epsilon}{2 \sqrt{a}}}^{\frac{\epsilon}{2 \sqrt{a}}} 
(2 -2 \cos{x})\Big(\frac{1+\cos x}{2}\Big)^n ~d \theta.$$
Temporarily, let $h_n(x)=(2 -2 \cos{x})(\frac{1+\cos x}{2})^n $. Then, we can write 
\begin{equation}
\label{eq:r_n}
r_n=\frac{1}{8 a^{\frac{3}{2}}} 
\Big( \int_{-\pi}^{\pi} h_n(x)~dx - 
\int_{[-\pi,\frac{-\epsilon}{2 \sqrt{a}}] \cup [\frac{\epsilon}{2 \sqrt{a}},\pi]} h_n(x)~dx \Big)
\end{equation}
By proposition \ref{prop:catalan} and equation \ref{eq:catalan_asymptotics}, the sequence
\begin{equation}
\label{eq:s_n}
s_n=\frac{1}{8 a^{\frac{3}{2}}} \int_{-\pi}^{\pi} h_n(x)~dx=
\frac{\pi}{2 (4^n) a^{\frac{3}{2}}} c_n \asymp \frac{\sqrt{\pi}}{2   n^{\frac{3}{2}} a^{\frac{3}{2}}}.
\end{equation}
On the other hand, there is a constant $\delta<1$ so that $\frac{1+\cos x}{2} < \delta$ for all 
$x \in [-\pi,\frac{-\epsilon}{2 \sqrt{a}}] \cup [\frac{\epsilon}{2 \sqrt{a}},\pi]$. Therefore
$$\Big|\int_{[-\pi,\frac{-\epsilon}{2 \sqrt{a}}] \cup [\frac{\epsilon}{2 \sqrt{a}},\pi]} h_n(x)~dx \Big|
< \int_{[-\pi,\frac{-\epsilon}{2 \sqrt{a}}] \cup [\frac{\epsilon}{2 \sqrt{a}},\pi]} 4 \delta^n~dx
= 2 (\pi - \frac{\epsilon}{2 \sqrt{a}}) 4 \delta^n,$$
which is exponentially shrinking. Thus, equation \ref{eq:r_n} yields the asymptotic formula 
$$\lim_{n \to \infty} \frac{r_n}{(\frac{\sqrt{\pi}}{2   n^{\frac{3}{2}} a^{\frac{3}{2}}})}=
\lim_{n \to \infty} \frac{s_n}{(\frac{\sqrt{\pi}}{2   n^{\frac{3}{2}} a^{\frac{3}{2}}})}=1.$$
\end{proof}

\begin{proof}[Proof of Lemma \ref{lem:general asymptotics}]
Let $a$, $\epsilon$, $f$, $g$, $r_n$, and $s_n$ be as in the statement of the lemma.  
We will show that 
$$\lim_{n \to \infty} \frac{r_n}{n^{\frac{-3}{2}}}=\frac{\sqrt{\pi}}{2 a^{\frac{3}{2}}}$$
using the squeeze theorem. We will provide upper and lower bounds for this limit converging to 
the stated limit.

Take any $a_+>a$. Consider the functions $f_+(x)=\frac{1}{1+a_+-a} f_{a_+}(x)$ and $g_+(x)=g_{a_+}(x)$
with $f_{a_+}$ and $g_{a_+}$ as in corollary \ref{cor:example_asymptotics}. Then there is an 
$\epsilon_+$ with $0<\epsilon_+ \leq \epsilon$ so that
\begin{enumerate}
\item $g_+(x)<g(x)$ for all $x\neq 0$ with $x \in [-\epsilon_+,\epsilon+]$,
\item $f_+(x) < f(x)$ for all $x\neq 0$ with $x \in [-\epsilon_+,\epsilon+]$.
\end{enumerate}
Then it follows from corollary \ref{cor:example_asymptotics} that
$$\lim_{n \to \infty} \frac{r_n}{n^{\frac{-3}{2}}}
> \lim_{n \to \infty} \frac{\int_{-\epsilon_+}^{\epsilon_+} f_+(x) g_+(x)^n~dx}{n^{\frac{-3}{2}}}
= \frac{\sqrt{\pi}}{2 a_+^\frac{3}{2} (1+a_+-a)}.$$
Note that $a_+$ can be taken to be arbitrarily close to $a$ yielding the inequality
$$\lim_{n \to \infty} \frac{r_n}{n^{\frac{-3}{2}}} \geq \frac{\sqrt{\pi}}{2 a^\frac{3}{2}}.$$

A similar argument shows the opposite inequality.
\end{proof}
\subsection{Bounds and derivatives of eigenvalues}
\label{sect:appendix_trace}
The purpose of this section is to prove lemmas \ref{lem:bounds}
and \ref{lem:derivative_of_eig} of subsection \ref{sect:asymptotic_homology}.
Further, this section implicitly relates the constant $\kappa_G$ appearing in section \ref{sect:asymptotic_homology}
to billiards in hyperbolic triangles.

For a hyperbolic element $G$ of the hyperbolic isometry group $\PGL(2,\R)$, the greatest eigenvalue $\lambda$ 
has the geometric significance of 
\begin{equation}
\label{eq:dist}
\inf_{x \in \H^2} \textit{dist}(x, Gx)=2 \log \lambda.
\end{equation}
Moreover, this infimum is achieved. The collection of points where this infimum is achieved is a geodesic
in $\H^2$ called the {\em axis} of the hyperbolic isometry $G$. The axis has a canonical orientation
determined by the direction $G$ translates the geodesic. 
If $G$ belongs to a discrete group $\Gamma$,
then this axis projects to a curve of finite length in the quotient $\H^2/\Gamma$. 

In our case, $\Gamma$ will be a triangle group generated by three reflections. See figure \ref{fig:veechgroup}.
The quotients $\Delta=\H^2/\Gamma$ will be triangular billiard tables. The projection of the axis of an element
$G \in \Gamma$ will be a closed billiard path in the table $\H^2/\Gamma$. We can make this more concrete.
Given $G$ with eigenvalue $\lambda>1$, 
consider an orientation and distance preserving map 
$\tilde{\gamma}:[0,2 \log \lambda] \to \textit{Axis}(G)$. This curve projects to the closed billiard
path quotient $\gamma:[0,2 \log \lambda] \to \H^2/\Gamma$, which bounces
of edges according to the rules of optics. 

We will need to extend these ideas to triangular billiard tables which are not quotients of $\H^2$ by 
a discrete group. Let $\Delta$ be any triangle in $\H^2$ with edges marked by the set $\{a,b,c\}$. 
We will use $\ell_a$, $\ell_b$, and $\ell_c$ to denote the bi-infinite geodesics in $\H^2$ that contain
the marked edges. There is a natural representation to the isometry
group of the hyperbolic plane, $\rho_\Delta:\Z_2 \ast \Z_2 \ast \Z_2 \to \PGL(2,\R)$,
determined by sending the generators to the reflections $R_a$, $R_b$, and $R_c$
in the lines $\ell_a$, $\ell_b$, and $\ell_c$ respectively.
Given an element $G \in \Z_2 \ast \Z_2 \ast \Z_2$, it may be written as a product of reflections
$$G=R_{e_n} R_{e_{n-1}} \ldots R_{e_2} R_{e_1}.$$
We define the {\em orbit-class} $\Omega(G)$ to be the collection of all closed curves in $\H^2$ that visit the edges
$e_1, e_2, \ldots, e_n$ in that order. We define the {\em orbit-length} of $G$ to be
$\ell(G)=\inf_{\gamma \in \Omega(G)} \textit{length}(\gamma)$.

We have the following lemma.

\begin{lemma}
\label{lem:inf}
Let $\Delta$ and $G$ be as above. Let $\lambda$ denote the eigenvalue of $\rho_\Delta(G)$ with largest eigenvalue.
Then $2 \log |\lambda|\leq \ell(G)$.
\end{lemma}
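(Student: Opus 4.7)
\startproof
The plan is to use the classical billiard unfolding trick. Given any $\gamma\in\Omega(G)$, I will build from it a path $\tilde\gamma$ in $\H^2$ of the same length whose endpoints are $p$ and $\rho_\Delta(G)(p)$, for some basepoint $p\in\H^2$. Since the translation length of the hyperbolic isometry $\rho_\Delta(G)$ realizes $\inf_x \textit{dist}(x,\rho_\Delta(G)\,x)=2\log|\lambda|$ (and equals $0$ in the non-hyperbolic case, where $|\lambda|=1$), this will yield $2\log|\lambda|\le\textit{length}(\gamma)$, and taking the infimum over $\gamma$ completes the proof.

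To carry this out, parameterize $\gamma:[0,1]\to\H^2$ with $\gamma(0)=\gamma(1)=p$ and let $0<t_1<t_2<\cdots<t_n<1$ be chosen so that $\gamma(t_i)\in\ell_{e_i}$; such $t_i$ exist by the definition of $\Omega(G)$. Set $t_0=0$ and $t_{n+1}=1$, and for $t\in[t_i,t_{i+1}]$ define
\[
\tilde\gamma(t)=R_{e_n}R_{e_{n-1}}\cdots R_{e_{i+1}}\bigl(\gamma(t)\bigr),
\]
with the empty composition at $i=n$ interpreted as the identity. Each piece is the image of the corresponding piece of $\gamma$ under an isometry of $\H^2$, so $\textit{length}(\tilde\gamma)=\textit{length}(\gamma)$. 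Continuity at $t_i$ reduces to the equality $R_{e_n}\cdots R_{e_{i+1}}(\gamma(t_i))=R_{e_n}\cdots R_{e_i}(\gamma(t_i))$, which holds precisely because $\gamma(t_i)\in\ell_{e_i}$ and hence is fixed by $R_{e_i}$. Evaluating at the endpoints gives $\tilde\gamma(1)=\gamma(1)=p$ and
\[
\tilde\gamma(0)=R_{e_n}R_{e_{n-1}}\cdots R_{e_1}\bigl(\gamma(0)\bigr)=\rho_\Delta(G)(p).
\]

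Therefore $\textit{dist}\bigl(p,\rho_\Delta(G)(p)\bigr)\le\textit{length}(\tilde\gamma)=\textit{length}(\gamma)$. If $\rho_\Delta(G)$ is hyperbolic, then by equation \ref{eq:dist} we have $2\log|\lambda|=\inf_{x\in\H^2}\textit{dist}(x,\rho_\Delta(G)x)\le\textit{dist}\bigl(p,\rho_\Delta(G)(p)\bigr)$. If instead $\rho_\Delta(G)$ is parabolic or elliptic, then $|\lambda|=1$ and $2\log|\lambda|=0\le\textit{length}(\gamma)$. In either case, $2\log|\lambda|\le\textit{length}(\gamma)$, and taking the infimum over $\gamma\in\Omega(G)$ yields $2\log|\lambda|\le\ell(G)$.

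The whole argument is essentially the observation that the unfolding procedure is well defined exactly because each reflection line $\ell_{e_i}$ fixes $\gamma(t_i)$; the only mild subtlety is notational, namely keeping the order of the reflections straight so that the two ends of $\tilde\gamma$ really are related by $\rho_\Delta(G)=R_{e_n}\cdots R_{e_1}$ rather than its inverse. If one prefers rectifiable rather than smooth curves, the same argument applies since length is defined via polygonal approximations and is preserved under the piecewise-isometric map $\gamma\mapsto\tilde\gamma$.
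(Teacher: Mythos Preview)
Your proof is correct and follows essentially the same approach as the paper: both use the standard billiard unfolding trick to convert a closed curve in $\Omega(G)$ into a path in $\H^2$ of the same length joining a point to its image under $\rho_\Delta(G)$, then invoke equation~\ref{eq:dist}. The only cosmetic differences are that the paper argues by contradiction and unfolds from the initial segment (applying $W_i=R_{e_i}\cdots R_{e_1}$ to the $i$-th arc), whereas you argue directly and unfold from the terminal segment (applying $R_{e_n}\cdots R_{e_{i+1}}$); your continuity check and your explicit treatment of the non-hyperbolic case are a bit more careful than the paper's, but the substance is identical.
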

\begin{proof}
The statement is certainly true unless $\lambda>1$. We will assume that there is a path $\gamma \in \Omega(G)$
with length less than $2 \log \lambda$ and draw a contradiction. We will construct a new path $\gamma'$ in
$\H^2$ with length equal to that of $\gamma$ such that $\rho_\Delta(G)$ translates the starting point of $\gamma'$
to the ending point of $\gamma'$. This will be a contradiction to equation \ref{eq:dist}.

We may assume $\gamma$ begins on the geodesic $\ell_{e_1}$ and then travels to $\ell_{e_2}$ and so on. 
Let $\gamma_i$
be the portion of $\gamma$ which travels from $\ell_{e_i}$ to $\ell_{e_{i+1}}$, with 
$\gamma_n$ the final arc of the path.
Let $W_i=R_{e_i} R_{e_{i-1}} \ldots R_{e_1}$ for $i=1 \ldots n$. 
We define $\gamma'$ to be the concatenation of arcs
$$\gamma'=W_1(\gamma_1) \cup W_2(\gamma_2) \cup \ldots \cup W_n(\gamma_n).$$
It can be seen by induction that this is a connected path. Since $\gamma$ was closed and began on $\ell_{e_1}$,
the the fixed point set of $W_1$, the transformation $G=W_n$ takes the starting point of $\gamma'$ to the ending
point of $\gamma'$ as planned. And, $\gamma$ has the same length as $\gamma'$.
\end{proof}

We will need to introduce one more idea. The {\em Klein model} for the hyperbolic plane consists of 
$\KH^2=\{(x^2,y^2) \in \R^2 ~|~x^2+y^2<1\}$. The boundary 
$\del \KH^2=\{(x^2,y^2) \in \R^2 ~|~x^2+y^2=1\}$. Geodesics in the Klein model
are Euclidean line segments.
Distance between points in the Klein model may be computed
in two ways. Let $P_1$ and $P_2$ be two points in $\KH^2$, and let $\overline{P_1 P_2}$ be the Euclidean
line through them. Let $Q_1$ and $Q_2$ be the two points of $\del \KH^2 \cap \overline{P_1 P_2}$, chosen so that
$P_1$ is closest in the Euclidean metric to $Q_1$. Then the distance between $P_1$ and $P_2$ is given
by the logarithm of the cross ratio,
\begin{equation}
\label{eq:cr}
\textit{dist}_{\KH^2} (P_1,P_2)=\log \Big(\frac{\textit{dist}_{\R^2}(P_1,P_2) \textit{dist}_{\R^2}(Q_1,Q_2)}
{\textit{dist}_{\R^2}(P_1,Q_1) \textit{dist}_{\R^2}(P_2,Q_2)}\Big).
\end{equation}
Alternately, we can compute distance by using the metric tensor $ds$. 
\begin{equation}
\label{eq:metric_tensor}
ds=\sqrt{\frac{dx^2+dy^2}{1-x^2-y^2}+\frac{(x dx+y dy)^2}{(1-x^2-y^2)^2}}.
\end{equation}
The distance between two points can be computed by integrating the metric tensor over the geodesic path
between them. See \cite{CFKP} or any hyperbolic geometry text for more details.

As discussed below theorem \ref{thm:veech_groups}, the groups $\Gamma(S_c)$ are generated by the reflections 
$A_c$, $B_c$, and $C_c$ in the sides of a triangle $\Delta_c$ in $\H^2$ together with $-I$, 
which acts trivially on $\H^2$. Recall figure \ref{fig:veechgroup}.
When $c=\cos \theta \leq 1$, the triangle has two ideal vertices and 
one vertex with angle $\theta$. This is true for all $c \leq 1$, not just the $c$ of the form $\cos \frac{2\pi}{n}$. 
For our purposes, we will think of $\Delta_c \subset \KH^2$. We define
\begin{equation}
\Delta_c=\textit{Convex Hull}(\{P_1,P_2,P_3\}) \subset  \KH^2
\end{equation}
where $P_1=(-1,0)$, $P_2=(-1,0)$, and $P_3=(0,\frac{\sqrt{1+c}}{\sqrt{2}})$. 
This triangle is isometric to the triangle described for the group $\Gamma(S_c)$. The reflection lines of the elements 
$A_c$, $B_c$, and $C_c$ are given
by $\overline{P_3 P_1}$, $\overline{P_1 P_2}$, and $\overline{P_2 P_3}$ respectively.

\begin{proof}[Proof of the eigenvalue upper bound lemma \ref{lem:bounds}]
Choose $\hat{G} \in \Aut(S_{\geq 1})$ be chosen so that $\hat{G}$ acts hyperbolically on $S_1$.
Then $G_1$ is a hyperbolic transformation of $\H^2$. The projection of the axis of $G_1$ to
$\H^2/\Gamma(S_1)$ minimizes length in the {\em orbit-class} $\Omega(G)$.
In particular, this billiard path $\gamma_1$ realizes the infimum discussed in lemma \ref{lem:inf}.

Let $\lambda_c$ denote the eigenvalue with greatest absolute value of $G_c \in \Gamma(S_c)$. 
We will use lemma \ref{lem:inf} to show that $|\lambda_c|<\lambda_1$ for all $-1 \leq c < 1$. 
We define the linear map 
\begin{equation}
\label{eq:mc}
M_c~:~\Delta_1 \to \Delta_c~:~(x,y)\mapsto(x,\frac{y\sqrt{1+c}}{\sqrt{2}}).
\end{equation}
We claim that $M_c$ shortens every line segment except line segments contained in the side
$\overline{P_1 P_2}$. Consider a segment $\overline{XY}$ with finite length in $\Delta_1$. Let $\overline{PQ}$ be
the geodesic containing $\overline{XY}$, so that $P,Q \in \del \KH^2$ with $P$ the closest to $X$ as
in the left side of figure \ref{fig:trimap}. Then by equation
\ref{eq:cr},
$$\begin{array}{rcl}
\textit{dist}_{\KH^2} (X,Y)& =& {\displaystyle \log \Big(\frac{\textit{dist}_{\R^2}(X,Y) \textit{dist}_{\R^2}(P,Q)}
{\textit{dist}_{\R^2}(X,P) \textit{dist}_{\R^2}(Y,Q)}\Big)} \\
& = &{\displaystyle  \log \Big(\frac{\textit{dist}_{\R^2}(M_c(X),M_c(Y)) \textit{dist}_{\R^2}(M_c(P),M_c(Q))}
{\textit{dist}_{\R^2}(M_c(X),M_c(P)) \textit{dist}_{\R^2}(M_c(Y),M_c(Q))}\Big).}
\end{array}
$$
And, let $P',Q' \in \del \KH^2$ be the points where the geodesic $\overline{M_c(P) M_c(Q)}$ intersects 
the boundary. Then,
$$\textit{dist}_{\KH^2} (M_c(X),M_c(Y))=\log \Big(\frac{\textit{dist}_{\R^2}(M_c(X),M_c(Y)) \textit{dist}_{\R^2}(P',Q')}
{\textit{dist}_{\R^2}(M_c(X),P') \textit{dist}_{\R^2}(M_c(Y),Q')}\Big).$$
It is a standard computation that $\textit{dist}_{\KH^2} (X,Y)>\textit{dist}_{\KH^2} (M_c(X),M_c(Y))$
so long as either $M_c(P) \neq P'$ or $M_c(Q) \neq Q'$. In particular, the only time this inequality 
could be false is when $PQ \subset \{(x,y)|y=0\}$. Our claim is proved.

\begin{figure}[h]
\begin{center}
\includegraphics[width=3.5in]{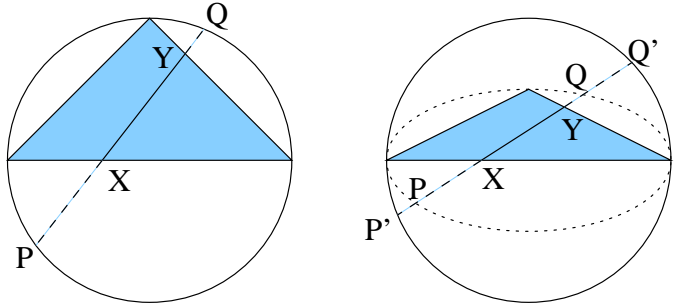}
\caption{A segment and its image under $M_c$. }
\label{fig:trimap}
\end{center}
\end{figure}

No finite length billiard path can have a segment contained in the line $y=0$, therefore 
$$\textit{length}\big( M_c(\gamma_1)\big) < \textit{length}\big( \gamma_1 \big).$$
whenever $-1 \leq c < 1$. Then by lemma \ref{lem:inf}, for all such $c$, 
$$2 \log |\lambda_c| \leq \textit{length}\big( M_c(\gamma_1)\big) < \textit{length}\big( \gamma_1 \big) = 2 \log \lambda_1.$$
Thus $|\lambda_c|<\lambda_1$. 
\end{proof}

\begin{proof}[Proof of the derivative of the eigenvalue lemma \ref{lem:derivative_of_eig}]
Let $\hat{G} \in \Aut(S_{\geq 1})$ be chosen so that $\hat{G}$ acts hyperbolically on $S_1$.
Let $\lambda_c$ be the largest eigenvalue of $G_c$. We must show
$$\frac{d}{dc} \lambda_c\Big|_{c=1}>0.$$
Clearly $\frac{d}{dc} \lambda_c$ exists, since entries of $G_c$ vary polynomially in $c$. We can afford
to restrict attention to the case of $c<1$, so we may recall the map $M_c: \Delta_1 \to \Delta_c$ of
equation \ref{eq:mc}.

Let $m=\frac{\sqrt{1+c}}{\sqrt{2}}$. We have $\frac{d}{dc} m=\frac{1}{2 \sqrt{2+2c}}$,
and $\frac{d}{dc} m|_{c=1}=\frac{1}{4}$.
By equation \ref{eq:metric_tensor},
the $\KH^2$ length of the vector ${\bf i}=(1,0)$ at the point $(x,y) \in \KH$ is given
by 
$$I_1=\frac{\sqrt{1-y^2}}{1-x^2-y^2}.$$
The length of the vector $M_c({\bf i})=(1,0)$ at the point $M_c(x,y)=(x,my)$ is given by
$$I_2=\frac{\sqrt{1-m^2 y^2}}{1-x^2-m^2y^2}.$$
We compute 
$$\frac{d}{d c} [\frac{I_2}{I_1}]_{c=1}=\frac{y^2(1+x^2-y^2)}{4(1-y^2)(1-x^2-y^2)}>0.$$
Note, we are perturbing $c$ in the negative direction. This says that off the line $y=0$, 
$M_c$ is compressing every horizontal vector enough to be detected
by the first derivative.
Let $J_1$ be the $\KH^2$ length of the vector ${\bf j}=(0,1)$ at the point $(x,y) \in \KH$ and
$J_2$ be the $\KH^2$ length of the vector $M_c({\bf j})=(0,m)$ at the point $M_c(x,y)=(x,my)$.
We have
$$J_1=\frac{\sqrt{1-x^2}}{1-x^2-y^2}
\quad \textrm{and} \quad
J_2=\frac{m\sqrt{1-x^2}}{1-x^2-m^2 y^2}.$$
We compute 
$$\frac{d}{d c} [\frac{J_2}{J_1}]_{c=1}=\frac{1-x^2+y^2}{4(1-x^2-y^2)}>0.$$
In this case, $M_c$ is compressing every vertical vector enough to be detected
by the first derivative.

The argument concludes in the same manner as the previous proof. Let $\gamma_1$ be the 
billiard path on $\Delta_1$ corresponding to $G_1$. The argument above tells us that
$\frac{d}{dc} \textit{length}(M_c(\gamma_1))=k>0.$ But for $c<1$, 
$$2 \log \lambda_c \leq \textit{length}(M_c(\gamma_1))=\textit{length}(\gamma_1)-k(1-c)+\textit{higher order terms}.$$
By taking a straight forward derivative, we get 
$$2 \log \lambda_c=2 \log \lambda_1-(1-c)\frac{2 \frac{d}{dc} \lambda_c |_{c=1}}{\lambda_1}+\textit{higher order terms}.$$
This pair of equations imply
$$\frac{d}{dc} \lambda_c |_{c=1} \geq \frac{k \lambda_1}{2}>0.$$
\end{proof}

\bibliographystyle{amsalpha}
\bibliography{../bibliography}
\end{document}